\documentclass[12pt,twoside]{article}
\usepackage{amsmath,amssymb}
\usepackage{graphicx}
\usepackage{color}
\usepackage{subcaption}
\usepackage{float}
\usepackage[bookmarksnumbered=true,colorlinks,linkcolor=black]{hyperref}
\hypersetup{hypertex=true,colorlinks=true,linkcolor=red,anchorcolor=blue,citecolor=red}
\hypersetup{CJKbookmarks=true}

\allowdisplaybreaks[4]

\newtheorem{lm}{Lemma}[section]
\newtheorem{thm}{Theorem}[section]
\newtheorem{rmk}{Remark}[section]

\newcounter{saveeqn}%

\makeatletter
\evensidemargin
\oddsidemargin
\makeatother

\title{\Large\bf
Global center of polynomial Newton system and its non-isochronicity
\thanks{Jun Zhang is supported by NSFC \#12101087, CSC \#202108515020 and NSFSPC \#2024NSFSC1400,
Weinian Zhang is supported by National Key R\&D Program of China (2022YFA1005900) and NSFC \#12171336.
}
}

\author{{\sc Colin Christopher}\,$^a$,~~{\sc Jun Zhang}\,$^b$,~~{\sc Weinian Zhang}\,$^c$
\\
\\
$^a${\small
School of Engineering, Computing and Mathematics, University of Plymouth}\\
{\small Plymouth, Devon PL4 8AA, UK}
\\
$^b${\small
School of Mathematical Sciences \& Sichuan Geomath Key Lab }\\
{\small Chengdu University of Technology, Sichuan 610059, P. R. China}
\\
$^c${\small
School of Mathematics, Sichuan University}\\
{\small Chengdu, Sichuan 610064, P. R. China}
}
\date{}

\begin{document}
\maketitle

\begin{abstract}
Using a new compactification (toroidal compactification) and desingularization,
we obtain a complete characterization of
monodromy at infinity for polynomial Newton system of arbitrary degree,
in which we establish an equivalence between the monodromy
and
the non-existence of $\frac{1}{2}$-fractional formal invariant curves.
Combining the complete characterization with
either Darboux integrability
or algebraic reducibility of local centers,
we obtain conditions for all cases of global center.
Furthermore,
investigating the asymptotic behavior of the period function of orbits near infinity,
we prove the non-isochronicity for the global center,
which consequently solves an open problem proposed by Conti.

\vskip 0.2cm

{\bf Keywords:}
Newton system, Cherkas system, toroidal compactification, global center, isochronicity

\vskip 0.2cm
{\bf AMS (2020) subject classification:}
34C05, 
34C25, 
34D23. 

\end{abstract}

\baselineskip 15pt    
\parskip 10pt         

\thispagestyle{empty}
\setcounter{page}{1}

\section{Introduction}
\setcounter{equation}{0}
\setcounter{lm}{0}
\setcounter{thm}{0}
\setcounter{rmk}{0}
\setcounter{df}{0}
\setcounter{cor}{0}
\setcounter{pro}{0}

By Newton's second law,
the motion of an object can be described by the second order differential equation $\ddot x=F(x,\dot x,t)$
(or equivalently the planar system $\dot x=y$ and $\dot y=F(x,y,t)$),
called the {\it Newton equation} (or the {\it Newton system}) in \cite[p.8]{Arnold89},
where $x$ is the position of the object and
$y$ is the velocity.
If we simply consider the autonomous case (i.e., $F$ is independent of $t$) and
$F$ is of the polynomial form,
the system can be presented as
\begin{align}
\dot x=y,~~~
\dot y=P_0(x)+P_1(x)y+\cdots+P_m(x)y^m,
\label{equ:Newton}
\end{align}
where integer $m\ge 0$, each $P_i(x)$ is a polynomial in $x$ and $P_m(x) \not\equiv 0$.
This system becomes
the well-known potential system (\cite{CJ89}) for $m=0$,
Li\'enard system (\cite{Cherkas76}) for $m=1$ and
Cherkas system (\cite{Cherkas76}) for $m=2$.

Oscillations in planar differential systems are closely related to center,
a concept introduced by Poincar\'e,
which refers to an equilibrium having a vicinity filled with nontrivial periodic orbits.
Many works (e.g. \cite{Cherkas76,Chris} for Li\'enard system and
\cite{Cherkas76,CS08} for Cherkas system)
are contributed to determination of centers.
One of interesting topics on centers is {\it global center},
i.e., the phase plane is full of nontrivial periodic orbits except for the center equilibrium,
because global center is not only an important research object 
in the study of the Arnol'd-Hilbert's 16th problem (\cite{DL03, FHX})
but also related to the real planar Jacobian conjecture (\cite{Gav97,Saba98}).
Since Galeotti and Villarini (\cite{GV92}) and also Liang (\cite{Liang92}) proved in 1992 that
a polynomial differential system is of odd degree
if the system has a global center,
great efforts have been made to identify all the polynomial differential systems of odd degree having a global center.
For polynomial rigid systems (i.e., the angular velocity is a constant),
Conti (\cite{Conti94}) proved that such a system has a global center if and only if
it is a linear system.
For homogeneous polynomial systems,
Conti (\cite[section~4]{Conti98}) gave a necessary and sufficient condition.
Later, this result was generalized in \cite{LLYZ} to quasi-homogeneous case.
For cubic Hamiltonian system without quadratic term,
authors in \cite{CLV14j,CLV14a} characterized
the non-degenerate global center and the nilpotent degenerate one.
For polynomial Rayleigh-Duffing system,
it is proved in \cite{CZZ} that there are no global centers.
One can refer to \cite{CFZ} and \cite{CL24} for details
on global center of the Kukles system.
For polynomial Newton systems,
a necessary and sufficient condition for the global center of system~\eqref{equ:Newton}
with $m=0$ is obtained in \cite{GG15},
and $m=1$ in \cite{LV22} for the non-degenerate case
and also in \cite{CLZ} for both the non-degenerate case and the nilpotent degenerate case.
However, up to now,
condition about global center was not given yet for
system~\eqref{equ:Newton} with a larger $m\ge 2$.

Knowing a center,
one may take concern to its isochronicity, i.e.,
every periodic orbit in the periodic annulus has the same period.
The study of isochronicity goes back to Huygens
who investigated the isochronous oscillation of cycloidal pendulum.
At present the existence of isochronous centers for many classes of systems,
such as potential system (\cite{CJ89}),
Li\'enard system (\cite{CD04}),
Cherkas system (\cite{Saba04}) and
linear plus homogeneous system (\cite{LV11}),
has been investigated.
However, global isochronicity has received less attention.
Conti (\cite[p.229]{Conti98}) gave
an example of Hamiltonian system of general odd degree
having an isochronous global center.
So he further proposed the problem:
{\it Identify polynomial systems of odd degree having an isochronous global center}.
This problem was solved by the answer that
there are no isochronous global centers for
nonlinear potential system (\cite{CJ89}),
separable Hamiltonian system (\cite{CGM}),
rigid system (\cite{Conti94})
and polynomial system with linear plus homogeneous nonlinearities (\cite{LSZ}).
On the contrary,
authors in \cite{LSZ} obtained all cubic polynomial systems having an isochronous global center,
and authors in \cite{CMV99} characterized
isochronous global center for Hamiltonian system of the form
$H(x,y)=A(x)+B(x)y+C(x)y^2$ with polynomials $A$, $B$ and $C$.
Continuing the above,
one may further ask:
{\it
Can the global center of system~\eqref{equ:Newton} with $m\ge 2$
be isochronous?}

In this paper we give necessary and sufficient conditions for
global center of polynomial Newton system~\eqref{equ:Newton}
and further prove its non-isochronicity.
Since a global center is related to monodromy at infinity,
in section 2 we compactify the phase plane to investigate such a monodromy
for preliminaries.
Usually, compactification is made to a disk or a sphere (\cite[Chapter~5]{DLA}).
Here we consider a torus as the compactification
under which the system at infinity is of the same form as \eqref{equ:Newton},
which is simpler than those under the usual compactification
so that desingularization of degenerate equilibria at infinity gets easier.
Under toroidal compactification,
we can prove easily that if system~\eqref{equ:Newton} is monodromic at infinity
then $m\le 2$, i.e., it is a Cherkas system.
Further,
desingularizing degenerate equilibria at infinity by quasi-homogenerous blow-ups
associated with Newton polygons,
we obtain a complete characterization of the monodromy at infinity
for the Cherkas system.
Moreover,
we establish an equivalence between the monodromy at infinity
and the non-existence of $\frac{1}{2}$-fractional formal invariant curves at infinity,
which can be identified by a finitely determinable system of polynomial equations.
It is worth mentioning that
except for those $\frac{1}{2}$-fractional formal invariant curves,
there would be ones of more complicated forms
if the Cherkas system is not monodromic at infinity, see Remark~\ref{rmk:node-ln}.
In section 3,
combining the result of monodromy at infinity
with either the Darboux integrability or the algebraic reducibility of Cherkas centers,
we characterize all cases of global center for Cherkas system.
Section 4 is devoted to the non-isochronicity
by investigating the asymptotic behavior of the period function of orbits near infinity.
For this purpose,
we consider a closed rectangle as a compactifycation of the real plane,
and then the boundary is a polycycle consisting of a finite union of equilibria
and integral curves.
After desingularization,
the boundary becomes a polycycle with no more than $4(n+3)$ vertices,
where $n:=\max\{\deg P_0,\deg P_1,\deg P_2\}$.
Then we scrutinize orders of poles and zeros of the desingularized vector field
at each vertex and along each side between two adjacent vertices to
determine cases that the period of orbit near infinity approaches to zero or infinity.
In the case that the period approaches to a nonzero constant,
we find that the polynomial Cherkas system can be transformed into
an analytic Li\'enard system with odd damping and restoration.
Further, the non-isochronicity can be derived by
applying Liouville's theorem on integration in terms of elementary functions
to show the invalidity of the isochronous center condition
for the analytic Li\'enard system (\cite{CD04}).
Thus,
the isochronous global center problem proposed by Conti is solved
for Newton system \eqref{equ:Newton}.
As an application,
in section 5 we generalize the global center result in \cite{CL24}
on homogeneous Kukles system of degree 5 to arbitrary degree.
Moveover,
we provide a simple proof for global center of polynomial Li\'enard system
to show the convenience of toroidal compactification.

\section{Monodromy at infinity}
\setcounter{equation}{0}
\setcounter{lm}{0}
\setcounter{thm}{0}
\setcounter{rmk}{0}
\setcounter{df}{0}
\setcounter{cor}{0}
\setcounter{pro}{0}

As indicated in \cite[Proposition~2]{LV21},
a polynomial differential system has a global center if and only if
it has only one equilibrium and it is a center and
the system is {\it monodromic at infinity}
(i.e., no orbits connecting with any equilibrium on the equator of the Poincar\'e sphere).
So we start with monodromy at infinity and prove the following.

\begin{lm}
If system \eqref{equ:Newton} is monodromic at infinity, then $m\le 2$.
\label{lm:NtoC}
\end{lm}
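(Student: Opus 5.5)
The plan is to argue by contraposition: assume $m\ge 3$ and construct an explicit orbit of \eqref{equ:Newton} that tends to infinity with $x$ staying bounded. Such an orbit must land on the endpoint of the $y$-axis on the equator of the Poincar\'e sphere. That contradicts monodromy at infinity in the sense recalled from \cite[Proposition~2]{LV21}.

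The key observation is that for large $|y|$ the vertical velocity $\dot y$ dominates $\dot x=y$ when $m\ge3$, so orbits are nearly vertical and $x$ changes only by a convergent amount. To make this precise I will change to the coordinate $w=1/y$ near $y=+\infty$. Along orbits with $\dot y\neq 0$ we have $dx/dy = y/F(x,y)$, where $F=\sum_{i=0}^m P_i(x)y^i$. In terms of $w$ this becomes
\begin{align*}
\frac{dx}{dw}=-\frac{w^{m-3}}{Q(x,w)},\qquad Q(x,w):=P_m(x)+P_{m-1}(x)w+\cdots+P_0(x)w^m .
\end{align*}
Choose $x_0$ with $P_m(x_0)\neq 0$, which is possible since $P_m\not\equiv 0$. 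Then $Q(x_0,0)\neq0$. Because $m\ge 3$, the exponent $m-3$ is nonnegative, so the right-hand side is polynomial in $w$ divided by a nonvanishing function. It is therefore analytic in a neighbourhood of $(x_0,0)$. By the existence theorem there is an analytic solution $x=\varphi(w)$ with $\varphi(0)=x_0$, defined for $|w|<\delta$.

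For $0<w<\delta$ the curve $\{(x,y)=(\varphi(w),1/w)\}$ is invariant for \eqref{equ:Newton}. Indeed, on it $\dot y=y^mQ(x,1/y)\neq0$ for $\delta$ small, so the orbit through any of its points is this graph, traversed monotonically in $y$. Along this orbit, in forward or backward time according to the sign of $P_m(x_0)$, we have $y\to+\infty$ while $x\to x_0$ stays bounded. On the Poincar\'e sphere such an orbit converges to the point at infinity in the direction $(0,1)$. This point is an equilibrium of the compactified field: the equator is invariant, and the leading behaviour $\dot y\sim P_m(x)y^m$ with $m\ge 3> 1=\deg_y\dot x$ makes the compactified flow vanish there. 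This is easy to check in the chart $x=u/v$, $y=1/v$ after multiplying by $v^{m-1}$ (or more simply in the toroidal chart $(x,w)$ above). Hence some orbit connects with an equilibrium at infinity, and the system is not monodromic at infinity. This proves $m\le2$.

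The only delicate step is this last identification of the limit point as an infinite equilibrium, and matching it with the precise definition of monodromy at infinity. If the Poincar\'e chart computation is messy, I would instead use the toroidal chart $(x,w)$ directly. After multiplying by $w^{m-2}$, i.e. dividing the field in $(x,w)$-coordinates by $w^{2-m}$, the vector field there is $\dot x=w^{m-3}$, $\dot w=-Q(x,w)$. For $m\ge 3$ it extends smoothly to $w=0$, and $\{w=0\}$ is invariant. The constructed solution is then an orbit crossing into $w=0$ transversally, which rules out monodromy as well.
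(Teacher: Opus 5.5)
Your proof is correct and is essentially the paper's argument: in the chart $w=1/y$ with the rescaled field $\dot x=w^{m-3}$, $\dot w=-Q(x,w)$ (the paper's (T1) form), a point $(x_0,0)$ with $P_m(x_0)\neq 0$ is crossed transversally. This gives an orbit escaping to infinity with $x$ bounded, which violates condition (i) of the toroidal characterization of monodromy at infinity. Your extra check that the corresponding point on the Poincar\'e equator is an equilibrium (true since $d\ge m\ge 3>1=\deg y$) only makes explicit the link to the Poincar\'e-sphere definition of monodromy at infinity, which the paper uses implicitly.
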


In order to discuss orbits near infinity,
we need to compactify the phase plane $\mathbb{R}^2$ (\cite{DLA, ZZF}).
As shown in \cite[Chapter~5]{DLA},
one usually uses the Poincar\'e sphere (disk), the Poincar\'e-Lyapunov sphere (disk) or the Bendixson sphere for compactification,
but none of them is convenient to use in our case.
Using the Poincar\'e sphere for compactification,
system~\eqref{equ:Newton} takes the following two forms
\begin{align*}
\dot u=\sum_{i=0}^mP_i \left(\frac{1}{v}\right)u^iv^{d+1-i}-u^2v^d,~~~
\dot v=-uv^{d+1}
\end{align*}
and
\begin{align}
\dot u=v^d-u\sum_{i=0}^mP_i\left(\frac{u}{v}\right)v^{d+1-i},~~~
\dot v=-\sum_{i=0}^mP_i\left(\frac{u}{v}\right)v^{d+2-i}
\label{PC2}
\end{align}
separately in the two local charts of the equator of the Poincar\'e sphere,
where we made a time-rescaling ${\rm d}t\to v^d {\rm d}t$ and $d:=\max\{i+\deg P_i:i=0,...,m\}$.
Using the Poincar\'e-Lyapunov sphere for compactification,
we will get two similar systems to the above.
Using the Bendixson sphere for compactification,
system~\eqref{equ:Newton} near the infinity is transformed to the system
\begin{equation}
\left\{
\begin{array}{lllll}
\dot u=v(v^2-u^2)(u^2+v^2)^{d-1}
-2u\sum_{i=0}^mP_i\left(\frac{u}{u^2+v^2}\right)(u^2+v^2)^{d-i}v^{i+1},
\\
\dot v=(u^2-v^2)
\sum_{i=0}^mP_i\left(\frac{u}{u^2+v^2}\right)(u^2+v^2)^{d-i}v^i
-2uv^2(u^2+v^2)^{d-1}
\end{array}
\right.
\label{BC}
\end{equation}
near the origin.
Clearly, systems~\eqref{PC2} and \eqref{BC} are both complicated,
which makes difficulties to further desingularize degenerate equilibria at infinity.

We turn to use toroidal compactification, that is,
use the torus
$$
\mathbb{R}^2_\infty:=\big(\mathbb{R}\cup\{\infty\}\big)\times\big(\mathbb{R}\cup\{\infty\}\big)
$$
to compactify the phase plane $\mathbb{R}\times\mathbb{R}$,
as seen in Fig.~\ref{fig:TC}.
Alternatively,
we can use the rectangle
$
\big(\mathbb{R}\cup\{\pm\infty\}\big)\times\big(\mathbb{R}\cup\{\pm\infty\}\big)
$
to compactify the phase plane.
Under toroidal compactification,
the line $\mathbb{R}\times\{\infty\}$,
the line $\{\infty\}\times\mathbb{R}$ and
the point $\{\infty\}\times\{\infty\}$
of $\mathbb{R}^2_\infty$ represent the infinity of $\mathbb{R}^2$.
Thus we apply the following transformations
\begin{center}
{\bf (T1)} $y=1/v$, ~~~
{\bf (T2)} $x=1/u$, ~~~
{\bf (T3)} $x=1/u$ and $y=1/v$
\end{center}
to investigate orbits of system~\eqref{equ:Newton} near
the line $\mathbb{R}\times\{\infty\}$,
the line $\{\infty\}\times\mathbb{R}$ and
the point $\{\infty\}\times\{\infty\}$,
respectively.
Under {\bf (T1)}, {\bf (T2)} and {\bf (T3)},
system \eqref{equ:Newton} becomes the following three forms
\begin{align}
&\dot x=v^{\widetilde{m}-1},
&&\dot v=-\{P_m(x)+P_{m-1}(x)v+\cdots+P_0(x)v^m\big\}v^{\widetilde{m}-(m-2)},
\label{equ:T1}
\\
&\dot u=-u^{n+2}y,
&&\dot y=\widetilde{P}_0(u)+\widetilde{P}_1(u)y+\cdots+\widetilde{P}_m(u)y^m,
\label{equ:T2}
\\
&\dot u=u^{n+2}v^{\widetilde{m}-1},
&&\dot v=\{\widetilde{P}_m(u)+\widetilde{P}_{m-1}(u)+\cdots
+\widetilde{P}_0(u)v^m\}v^{\widetilde{m}-(m-2)},
\label{equ:T3}
\end{align}
respectively,
where a time-rescaling ${\rm d}t\to v^{m-2}{\rm d}t$ is maded in \eqref{equ:T1},
${\rm d}t\to u^n{\rm d}t$ in \eqref{equ:T2} and
${\rm d}t\to -u^nv^{\widetilde{m}}{\rm d}t$ in \eqref{equ:T3},
$n:=\max\{\deg P_0,...,\deg P_m\}$, $\widetilde{m}:=\max\{m-2,1\}$ and
\begin{align}
\widetilde{P}_i(u):=u^nP(1/u)~~~\mbox{for all}~i=0,...,m.
\end{align}
The first equation in each one of
the above three systems \eqref{equ:T1}-\eqref{equ:T3} is simply a monomial,
which makes it easier to further desingularization.

Under toroidal compactification,
system~\eqref{equ:Newton} is monodromic at infinity if and only if
the following three conditions hold:
{\bf(i)} system~\eqref{equ:T1} has no orbits
in the region $\{(x,v)\in\mathbb{R}^2:v\ne 0\}$
approaching to any point on the $x$-axis,
{\bf(ii)} system~\eqref{equ:T2} has no orbits
in the region $\{(u,y)\in\mathbb{R}^2:u\ne 0\}$
approaching to any point on the $y$-axis, and
{\bf(iii)} the equilibrium $(0,0)$ of system~\eqref{equ:T3} is a saddle.

\begin{figure}[h]
  \centering
  \includegraphics[height=1in]{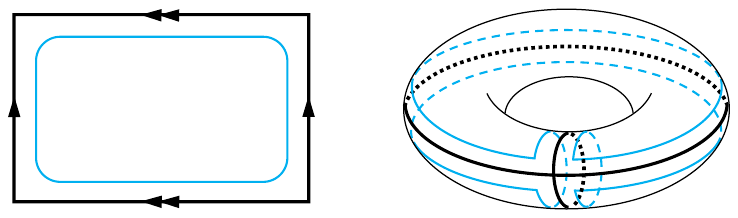}\\
  \caption{Rectangular compactification and toroidal compactification.}
  \label{fig:TC}
\end{figure}

{\bf Proof of Lemma~\ref{lm:NtoC}.}
If it is not true,
then $m\ge 3$ and system~\eqref{equ:T1} becomes
\begin{equation*}
\dot x=v^{m-3},
~~~
\dot v=-P_m(x)-P_{m-1}(x)v-\cdots-P_0(x)v^m.
\end{equation*}
Note that $P_m(x)\not\equiv 0$.
The orbit of the above system passing through any point $(x_*,0)$
such that $P_m(x_*)\ne 0$ intersects with the $x$-axis transversally
since $\dot v|_{(x_*,0)}=P_m(x_*)\ne 0$.
By the statement given just before the proof,
there is a contradiction to the monodromic assumption.
\qquad$\Box$

By Lemma~\ref{lm:NtoC},
we only need to consider the polynomial Cherkas system
\begin{align}
\dot x=y,~~~
\dot y=P_0(x)+P_1(x)y+P_2(x)y^2,
\label{equ:cherkas}
\end{align}
where
$P_0(x):=a_0+\cdots+a_nx^n$,
$P_1(x):=b_0+\cdots+b_nx^n$,
$P_2(x):=c_0+\cdots+c_nx^n$,
$n\ge 0$
and $a_n^2+b_n^2+c_n^2\ne 0$.
We assume that $P_0(x)\not\equiv 0$; otherwise,
system~\eqref{equ:cherkas} cannot be monodromic at infinity
since the $x$-axis is full of equilibria.
We also assume that $P_2(x)\not\equiv 0$
since the case $P_2(x)\equiv0$ has been studied in \cite{CLZ,GG15,LV22},
as indicated in the introduction.

In what follows,
we use quasi-homogeneous blow-ups associated with Newton polygons (\cite[Chapter~3]{DLA})
to characterize monodromy at infinity for Cherkas system~\eqref{equ:cherkas}.
Define the vector fields
\begin{align}
{\cal X}^{(0)}
:=F_1^{(0)}(u,v)\frac{\partial}{\partial u}+G_1^{(0)}(u,v)\frac{\partial}{\partial v},
~~~
{\cal Y}^{(0)}
:=F_2^{(0)}(u,y)\frac{\partial}{\partial u}+G_2^{(0)}(u,y)\frac{\partial}{\partial y},
\label{equ:X0Y0}
\end{align}
where $F_1^{(0)}(u,v):=u^{n+2}$, $G_1^{(0)}(u,v):=\widetilde{P}_0(u)v^3+\widetilde{P}_1(u)v^2+\widetilde{P}_2(u)v$,
\begin{align*}
&F_2^{(0)}(u,y):=-u^{n+2}(y+y_*),
&G_2^{(0)}(u,y):=\widetilde{P}_0(u)+\widetilde{P}_1(u)(y+y_*)+\widetilde{P}_2(u)(y+y_*)^2,
\end{align*}
and $y_*:={-b_n}/{(2c_n)}$ with nonzero $c_n$.
Suppose that $F_1^{(0)}$ and $G_1^{(0)}$ have expansions
\begin{align*}
F_1^{(0)}(u,v)=\sum_{i+j\ge 0} f_{i,j}^{(0)} u^i v^j
~~~\mbox{and}~~~
G_1^{(0)}(u,v)=\sum_{i+j\ge 0} g_{i,j}^{(0)} u^i v^j.
\end{align*}
As defined in \cite[p.104]{DLA},
the lattice point set
$$
{\cal S}({\cal X}^{(0)}):=\{(i,j)\in\mathbb{Z}^2:(f_{i+1,j}^{(0)},g_{i,j+1}^{(0)})\ne (0,0)\}
$$
is referred to as the {\it support} of the vector field ${\cal X}^{(0)}$.
Embed the set ${\cal S}({\cal X}^{(0)})$ in the $(u,v)$-plane and
consider the lower convex semi-hull
$$
{\rm conv}\{{\cal S}({\cal X}^{(0)})+\mathbb{R}^2_+\},
$$
whose boundary consists of a vertical line, a horizontal line and a compact polygon,
which may also be a single lattice point.
The compact polygon is called the {\it Newton polygon} of the vector field ${\cal X}^{(0)}$,
denoted by ${\cal N}({\cal X}^{(0)})$.
If the Newton polygon ${\cal N}({\cal X}^{(0)})$ has an edge $E^{(0)}$
lying on the line
$$
q^{(0)}u+p^{(0)}v=\sigma^{(0)}
$$
for a pair of coprime positive integers $p^{(0)}$ and $q^{(0)}$,
then we define the {\it polynomial corresponding to the edge $E^{(0)}$} as
\begin{align}
\mathcal{P}_{E^{(0)}}(v):=
\sum_{(i,j)\in E^{(0)}\cap \mathbb{Z}^2}
(q^{(0)} g_{i,j+1}^{(0)}-p^{(0)} f_{i+1,j}^{(0)})v^{j+1}.
\label{defLE}
\end{align}
Moreover,
the {\it height} and {\it width} of the edge $E^{(0)}$ are
the lengthes of the projections on the $v$-axis and $u$-axis,
denoted by $h(E^{(0)})$ and $\omega(E^{(0)})$, respectively.
If the polynomial $\mathcal{P}_{E^{(0)}}$ has a real root $\phi^{(0)}$,
then we blow up the degenerate equilibrium $(0,0)$ of the vector field ${\cal X}^{(0)}$
in the positive $u_1$-axis
by the quasi-homogeneous blow-up $u=u_1^{q^{(0)}}$ and $v=u_1^{p^{(0)}}(\phi^{(0)}+v_1)$
and obtain the vector field
\begin{align*}
{\cal X}^{(1)}={\cal D}({\cal X}^{(0)};p^{(0)},q^{(0)},\phi^{(0)})
:=F_1^{(1)}(u_1,v_1)\frac{\partial}{\partial u_1}
+G_1^{(1)}(u_1,v_1)\frac{\partial}{\partial v_1},
\end{align*}
where a common factor $u^{\sigma^{(0)}}/q^{(0)}$ is eliminated, and
{\small
\begin{align*}
F_1^{(1)}(u_1,v_1)
&:=\frac{F_1^{(0)}(u,v)}{u_1^{\sigma^{(0)}+q^{(0)}-1}}
=u_1\left\{\sum_{(i,j)\in E^{(0)}\cap \mathbb{Z}^2}
f_{i+1,j}^{(0)}(\phi^{(0)}+v_1)^j+O(u_1)\right\},
\\
G_1^{(1)}(u_1,v_1)
&:=\frac{q^{(0)}G_1^{(0)}(u,v)}{u_1^{\sigma^{(0)}+p^{(0)}}}
-\frac{p^{(0)}(\phi^{(0)}+v_1)F_1^{(0)}(u,v)}{u_1^{\sigma^{(0)}+q^{(0)}}}
=\mathcal{P}_{E^{(0)}}(\phi^{(0)}+v_1)+O(u_1).
\end{align*}
}It is similar to define the above notations and concepts for the vector field ${\cal Y}^{(0)}$,
defined in \eqref{equ:X0Y0}.

\begin{lm}
Cherkas system~\eqref{equ:cherkas} is monodromic at infinity if and only if
$n$ is odd and either
\begin{description}
  \item[(M1)]
  $c_n=b_n=0$, $a_n<0$, and
  there is an integer $i_*\le n-1$ such that
  the Newton polygon ${\cal N}({\cal X}^{(i)})$ has exactly one edge $E^{(i)}$,
  whose height is 2 and width is $2p^{(i)}$ for an integer $p^{(i)}$,
  and the polynomial $\mathcal{P}_{E^{(i)}}$ has the only nonzero real root $\phi^{(i)}$,
  which is of multiplicity 2, for all $i=0,...,i_*-1$,
  and ${\cal N}({\cal X}^{(i_*)})$ has exactly one edge $E^{(i_*)}$,
  whose height is $2$ and width is $2p^{(i_*)}$ for an integer $p^{(i_*)}$,
  and the polynomial $\mathcal{P}_{E^{(i_*)}}$ has no nonzero real roots,
  where ${\cal X}^{(i+1)}:={\cal D}({\cal X}^{(i)};p^{(i)},1,\phi^{(i)})$
  for all $i=0,...,i_*-1$ and the definition of height (and width) of an edge is given just below \eqref{defLE}; or

  \item[(M2)]
  $c_n<0$ and $b_n^2-4a_n c_n<0$; or

  \item[(M3)]
  $c_n<0$, $b_n^2-4a_n c_n=0$, and
  ${\cal Y}^{(0)}$ satisfies the same condition as ${\cal X}^{(0)}$
  given in {\bf(M1)} with integer $i_*\le n$ $($or $\le n+1$$)$
  if $b_n\ne 0$ $($or $b_n=0$$)$.
\end{description}
\label{lm:AIC}
\end{lm}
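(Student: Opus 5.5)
We reduce monodromy at infinity to the three conditions (i)--(iii) stated before the proof of Lemma~\ref{lm:NtoC}, now specialised to $m=2$ (so $\widetilde m=1$), and analyse each chart in turn.

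\emph{Chart (T1).} System~\eqref{equ:T1} becomes $\dot x=1$, $\dot v=-(P_2(x)+P_1(x)v+P_0(x)v^2)v$. Here the $x$-axis is invariant and every orbit is a graph over $x$. Hence no orbit with $v\neq 0$ reaches a finite point of the axis in finite $x$; orbits can only escape as $x\to\pm\infty$. So condition (i) holds automatically, and everything is pushed into the corner point $(\infty,\infty)$ and the chart (T2).

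\emph{Chart (T2).} For~\eqref{equ:T2}, a point $(0,y_0)$ of the $y$-axis is an equilibrium exactly when $\widetilde P_0(0)+\widetilde P_1(0)y_0+\widetilde P_2(0)y_0^2=a_n+b_ny_0+c_ny_0^2=0$. Otherwise the $y$-axis is locally a transversal-free invariant line, i.e.\ orbits slide along it, and nothing enters. So (ii) fails precisely when the quadratic $c_ny^2+b_ny+a_n$ has a real root $y_0$ at which some orbit with $u\neq0$ tends to $(0,y_0)$.

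\emph{Chart (T3).} The origin of~\eqref{equ:T3} must be a saddle, with separatrices the two axes. This is where the parity of $n$ and the sign conditions enter. Rewriting~\eqref{equ:T3} (up to the time rescaling) as the field ${\cal X}^{(0)}$ with $\dot u=u^{n+2}$, $\dot v=\widetilde P_2(u)v+\widetilde P_1(u)v^2+\widetilde P_0(u)v^3$, we must track the orientation of the sectors. When $c_n\neq0$, the Newton polygon of ${\cal X}^{(0)}$ has the single edge from $(n+1,-1)$ to $(0,0)$, up to the support conventions. We compute the edge polynomial, determine the sector structure in the four quadrants of the blow-up, and use the time factor $u^nv$ (whose sign changes with the quadrant) to translate back. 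Requiring the hyperbolic sectors to glue into a rotation around the torus corner should force $n$ odd and $c_n<0$. The case $c_n=0$ reduces via the dominant term $\widetilde P_0 v^3$, giving $b_n=0$ (otherwise there is a node-type approach along the axis), $a_n<0$, and $n$ odd.

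\emph{Case split on the quadratic at the $y$-axis.}
\begin{itemize}
\item If $c_n<0$ and the discriminant $b_n^2-4a_nc_n$ is negative, no equilibria lie on the $y$-axis. Combined with the corner analysis this gives (M2).
\item If the discriminant is positive, there are two simple roots. Each is then a semi-hyperbolic or hyperbolic-type point, and a direct linearisation in $y$ shows an orbit leaves the axis, so the system is not monodromic.
\item If the discriminant is zero, we translate to the double root $y_*=-b_n/(2c_n)$, obtaining ${\cal Y}^{(0)}$, and the question is whether a nondegenerate orbit reaches $(0,0)$.
\item Similarly, in case $c_n=b_n=0$ the corner point in chart (T3) itself is degenerate, and this is (M1).
\end{itemize}

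\emph{Iterated blow-ups.} For both (M1) and (M3) we perform iterated quasi-homogeneous blow-ups ${\cal D}(\cdot;p^{(i)},1,\phi^{(i)})$. At each stage an orbit reaches the singular point iff some characteristic direction, i.e.\ a nonzero real root of ${\cal P}_{E^{(i)}}$, yields, after blowing up, an equilibrium on the exceptional divisor that has an orbit entering off the divisor.
\begin{itemize}
\item If there are several edges, or an edge of odd height, or a simple nonzero root, or a root of multiplicity other than $2$, the blown-up equilibrium has a hyperbolic or semi-hyperbolic or saddle-node sector meeting the exterior, and monodromy fails.
\item If the edge polynomial has no nonzero real roots, the point is monodromic at that level.
\item The only continuing case is a unique double root with height $2$ and width $2p^{(i)}$.
\end{itemize}
In that continuing case the new field again has the same structure, with $\dot u_1$ a monomial, so we iterate.

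\emph{Main obstacle.} The hard step is the finiteness bound $i_*\le n-1$ (respectively $n$ or $n+1$). Each iteration with a double root effectively computes the next coefficient of a $\tfrac12$-fractional Puiseux-type invariant curve $v\sim\sum\phi u^{\cdot}$. We expect to show that the exponents $p^{(i)}$ strictly increase while the total degree available in $\widetilde P_j$ is bounded by $n$. Thus after at most $n-1$ steps the lowest-order terms of $\widetilde P_1,\widetilde P_2$ are exhausted, and the edge polynomial becomes a pure monomial-plus-constant with no nonzero real root, or the process detects a formal invariant curve. Making this counting precise, and checking that the double-root case always yields the same edge shape (height $2$), will be the delicate bookkeeping.
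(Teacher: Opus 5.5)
\paragraph{Verdict.} Your overall architecture matches the paper's: conditions (i)--(iii) in the three charts, the split on $c_n$ and on the discriminant of $c_ny^2+b_ny+a_n$, and iterated blow-ups ${\cal D}(\cdot\,;p^{(i)},1,\phi^{(i)})$ for \textbf{(M1)}/\textbf{(M3)}. The genuine gap is the termination of the double-root chain. You leave it as an expectation, and the mechanism you propose is wrong.

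\paragraph{Why your termination mechanism fails.} The $p^{(i)}$ need not increase; they can all equal $1$. Nor do the degrees of $\widetilde P_j$ limit the chain. If $\widetilde P_1^2\equiv4\widetilde P_0\widetilde P_2$, then $G_1^{(0)}=\widetilde P_0\,v\,(v-\Psi)^2$ with $\Psi=-\widetilde P_1/(2\widetilde P_0)$, which is in general an infinite power series. The $G$-part alone would then produce a double nonzero root at every level.

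A concrete case: take $n=3$, $\widetilde P_0=-(1+u)^2$, $\widetilde P_1=2u(1+u)$, $\widetilde P_2=-u^2$, so that $\Psi=u-u^2+u^3-\cdots$. Ignoring $F_1^{(0)}$, the edge polynomials would be $-v(v-1)^2,\ -(v+1)^2,\dots$ indefinitely, with every $p^{(i)}=1$. The true $\mathcal{P}_{E^{(1)}}$ is $-v^2-2v-2$, and this comes entirely from the term $-p^{(0)}(\phi^{(0)}+v_1)F_1^{(0)}/u_1^{\sigma^{(0)}+1}$. So it is the monomial $F_1^{(0)}=u^{n+2}$, not an ``exhaustion'' of the coefficients of $\widetilde P_1,\widetilde P_2$, that stops the chain.

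\paragraph{The paper's counting.} ${\cal X}^{(i)}$ has $\dot u_i=u_i^{\varrho^{(i)}}$, with $\varrho^{(1)}=n+2-2p^{(0)}\le n$ and $\varrho^{(i+1)}=\varrho^{(i)}-p^{(i)}$.
\begin{itemize}
\item The support point $(\varrho^{(i)}-1,0)$ of $F$ must lie on or above the edge joining $(0,1)$ and $(2p^{(i)},-1)$. This forces $\varrho^{(i)}-1\ge p^{(i)}$, i.e.\ $\varrho^{(i+1)}\ge1$.
\item Since each $p^{(i)}\ge1$, $\varrho$ reaches $1$ within at most $n$ levels.
\item At that point $(0,0)$ becomes the left vertex. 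Neither \textbf{(S4)} nor the terminal height-$2$ edge of \textbf{(M1)} is then possible.
\item One is in configuration \textbf{(S0)}: a semi-hyperbolic point with Jacobian $\begin{pmatrix}1&0\\ *&0\end{pmatrix}$, which carries an analytic invariant curve, i.e.\ an actual orbit into the corner.
\end{itemize}
This gives $i_*\le n-1$. More importantly, it gives the necessity direction: when \textbf{(M1)} fails, one lands in \textbf{(S0)}--\textbf{(S3)} after finitely many steps and obtains a genuine $C^\infty$ invariant curve in $|u|^{1/2}$. An unending chain would only produce a formal curve, and your fallback ``or the process detects a formal invariant curve'' does not by itself contradict monodromy. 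For ${\cal Y}^{(0)}$, the factor $y+y_*$ in $F_2^{(0)}$ shifts the count, giving the bounds $n$ or $n+1$.

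\paragraph{Minor points.}
\begin{itemize}
\item For $c_n\neq0$ the support contains $(0,0)$, so ${\cal N}({\cal X}^{(0)})$ has no edge; your edge from $(n+1,-1)$ to $(0,0)$ does not exist. The origin is semi-hyperbolic with eigenvalue $c_n$, and the invariant center manifold $v=0$ carries $\dot u=u^{n+2}$. This gives ``saddle iff $n$ odd and $c_n<0$'' directly, with no quadrant or time-factor bookkeeping.
\item In your list of failing cases, ``odd height'' should read ``odd width''.
\item Your list omits the \textbf{(S0)} configuration (no edge, or an edge of height $1$, after a blow-up). That is exactly where the chain terminates, so it needs to be there.
\end{itemize}
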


{\bf Proof.}
According to the statement given just before the proof of Lemma~\ref{lm:NtoC},
we need to consider systems~\eqref{equ:T1}-\eqref{equ:T3} with $m=2$
when dealing with monodromy at infinity for Cherkas system~\eqref{equ:cherkas}.
That is,
analyze phase portrait of the system
\begin{align}
\left\{
\begin{array}{llll}
\dot x=1,
\\
\dot v=- P_2(x) v- P_1(x) v^2-P_0(x) v^3
\end{array}
\right.
\label{PP1}
\end{align}
near the line $v=0$,
determine phase portrait of the system
\begin{align}
\left\{
\begin{array}{llll}
\dot u=-u^{n+2}y,
\\
\dot y=\widetilde{P}_0(u)+\widetilde{P}_1(u)y+\widetilde{P}_2(u)y^2
=a_n+b_ny+c_ny^2+O(u)
\end{array}
\right.
\label{PP2}
\end{align}
near the line $u=0$,
and investigate phase portrait of the system
\begin{align}
\left\{
\begin{array}{llll}
\dot u=u^{n+2},
\\
\dot v=\widetilde{P}_2(u)v+ \widetilde{P}_1(u) v^2+\widetilde{P}_0(u) v^3
=v\big(c_n+b_nv+a_nv^2+O(u)\big)
\end{array}
\right.
\label{PP3}
\end{align}
near the origin $(0,0)$.
Note that
system~\eqref{equ:cherkas} is monodromic at infinity
if and only if
the following three conditions hold:
{\bf(i)} system~\eqref{PP1} has no orbits in the region
$\{(x,v)\in\mathbb{R}^2:v\ne 0\}$
approaching to any point on the $x$-axis,
{\bf(ii)} system~\eqref{PP2} has no orbits in the region
$\{(u,y)\in\mathbb{R}^2:u\ne 0\}$
approaching to any point on the $y$-axis, and
{\bf(iii)} the equilibrium $(0,0)$ of system~\eqref{PP3} is a saddle.

We first consider the sufficiency.
If condition {\bf(M1)} holds,
it is clear that system~\eqref{PP1} has no orbits
in the region $\{(x,v)\in\mathbb{R}^2:v\ne 0\}$
approaching to any point on the $x$-axis.
Moreover,
since $a_n<0$ and $b_n=c_n=0$,
system~\eqref{PP2} has no equilibria on the $y$-axis and therefore
it has no orbits in the region $\{(u,y)\in\mathbb{R}^2:u\ne 0\}$
approaching to any point on the $y$-axis.
So we only need to show that
the degenerate equilibrium $(0,0)$ with zero linear part is a saddle of system~\eqref{PP3}.
For this purpose,
we use quasi-homogeneous blow-ups associated with Newton polygons to desingularization.
Note that the Newton polygon of
the vector field ${\cal X}^{(0)}$ generated by system~\eqref{PP3}
has exactly one edge $E^{(0)}$,
which links the vertex $(0,2)$ with the vertex $(2p^{(0)},0)$.
Applying the quasi-homogeneous blow-up
$u=w_1 z_1$ and $v=z_1^{p^{(0)}}$
to blow up the degenerate equilibrium $(0,0)$ of ${\cal X}^{(0)}$
in the positive $v$-direction,
we obtain the vector field
\begin{align}
{\cal X}_h^{(1)}
:=w_1\big(-g_{0,3}^{(0)}+o(1)\big)\frac{\partial }{\partial w_1}
+z_1\big(g_{0,3}^{(0)}+o(1)\big)\frac{\partial }{\partial z_1},
\label{vf:Xh1}
\end{align}
where a common factor $z_1^{2p^{(0)}}/p^{(0)}$ is eliminated
and $g_{0,3}^{(0)}$ ($=a_n$) is the coefficient of $G_1^{(0)}$
corresponding to the left end-point $(0,2)$ of the edge $E^{(0)}$.
The equilibrium $(0,0)$ of the vector field ${\cal X}_h^{(1)}$ is a hyperbolic saddle
whose stable manifold and unstable manifold lie on the axes.
It is similar to blow up the degenerate equilibrium $(0,0)$
of ${\cal X}^{(0)}$ in the negative $v$-direction
by the transformation $u=w_1 z_1$ and $v=-z_1^{p^{(0)}}$
and find that
the equilibrium $(0,0)$ of the transformed vector field is also a hyperbolic saddle
whose stable manifold and unstable manifold lie on the axes.
On the other hand,
we blow up in the $u$-direction by the transformation
$u=u_1$ and $v=u_1^{p^{(0)}}(\phi^{(0)}+v_1)$ and obtain the vector field
\begin{align}
{\cal X}^{(1)}
=u_1^{n+2-2p^{(0)}}\frac{\partial }{\partial u_1}
+\big(\mathcal{P}_{E^{(0)}}(\phi^{(0)}+v_1)+O(u_1)\big)\frac{\partial }{\partial v_1},
\label{vf:X1}
\end{align}
where a common factor $u_1^{2p^{(0)}}$ is eliminated and
$$
\mathcal{P}_{E^{(0)}}(\phi^{(0)}+v_1)=g_{0,3}^{(0)} v_1^2(\phi^{(0)}+v_1).
$$
The vector field ${\cal X}^{(1)}$ has two equilibria
$(0,0)$ and $(0,-\phi^{(0)})$ on the $v_1$-axis.
We see from the the property of ${\cal N}({\cal X}^{(1)})$ given in {\bf(M1)} that $n+2-2p^{(0)}\ge 2$.
Then the equilibrium $(0,-\phi^{(0)})$ is semi-hyperbolic,
i.e., the Jacobian matrix at this equilibrium has exactly one zero eigenvalue.
By \cite[Theorem~2.19]{DLA} or \cite[Theorem~7.1, p.114]{ZZF},
the equilibrium $(0,-\phi^{(0)})$ is a saddle since $n$ is odd and $g_{0,3}^{(0)}=a_n<0$.
For the degenerate equilibrium $(0,0)$,
we see from {\bf(M1)} that the Newton polygon ${\cal N}({\cal X}^{(1)})$
has exactly one edge $E^{(1)}$,
linking the vertex $(0,1)$ with the vertex $(2p^{(1)},-1)$.
Then blowing up the degenerate equilibrium $(0,0)$ of ${\cal X}^{(1)}$
in the positive $v_1$-direction by the transformation
$u_1=w_2 z_2$ and $v_1=z_2^{p^{(1)}}$,
we obtain the vector field
\begin{align}
{\cal X}_h^{(2)}
:=w_2\big(-g_{0,2}^{(1)}+o(1)\big)\frac{\partial }{\partial w_2}
+z_2\big(g_{0,2}^{(1)}+o(1)\big)\frac{\partial }{\partial z_2},
\label{vf:Xh2}
\end{align}
where a common factor $z_2^{p^{(1)}}/p^{(1)}$ is eliminated
and $g_{0,2}^{(1)}$ ($=a_n$) is the coefficient of $G_1^{(1)}$ corresponding to
the left end-point $(0,1)$ of the edge $E^{(1)}$.
The equilibrium $(0,0)$ of ${\cal X}_h^{(2)}$ is a hyperbolic saddle
whose stable manifold and unstable manifold lie on the axes.
It is similar to blow up in the negative $v_1$-direction
by the transformation $u_1=w_2 z_2$ and $v_1=-z_2^{p^{(1)}}$
and find that the equilibrium $(0,0)$ of the transformed system is a hyperbolic saddle
whose stable manifold and unstable manifold lie on the axes.
On the other hand,
we blow up in the $u_1$-direction by
the transformation $u_1=u_2$ and $v_1=u_2^{p^{(1)}}(\phi^{(1)}+v_2)$ and obtain
\begin{align}
{\cal X}^{(2)}=u_2^{n+2-2p^{(0)}-p^{(1)}}\frac{\partial }{\partial u_2}
+\big(\mathcal{P}_{E^{(1)}}(\phi^{(1)}+v_2)+O(u_2)\big)\frac{\partial }{\partial v_2},
\end{align}
where a common factor $u_2^{p^{(1)}}$ is eliminated and
$$
\mathcal{P}_{E^{(1)}}(\phi^{(1)}+v_2)=g_{0,2}^{(1)} v_2^2.
$$
Note that ${\cal X}^{(2)}$ has the only equilibrium $(0,0)$ on the $v_2$-axis,
which is degenerate with zero linear part.
Similarly to the above,
for $i=3,4,...,i_*$,
by the property of ${\cal N}({\cal X}^{(i-1)})$ given in {\bf(M1)},
blowing up the degenerate equilibrium $(0,0)$ of the vector field ${\cal X}^{(i-1)}$
in the positive $v_{i-1}$-axis
by the transformation $u_{i-1}=w_i z_i$ and $v_{i-1}=z_i^{p^{(i-1)}}$,
we obtain
\begin{align}
{\cal X}_h^{(i)}:=w_i\big(-g_{0,2}^{(i-1)}+o(1)\big)\frac{\partial }{\partial w_i}
+z_i\big(g_{0,2}^{(i-1)}+o(1)\big)\frac{\partial }{\partial z_i},
\label{vf:Xhk}
\end{align}
where a common factor $z_i^{p^{(i-1)}}/p^{(i-1)}$ is eliminated
and $g_{0,2}^{(i-1)}$ ($=a_n$) is the coefficient of $G_1^{(i-1)}$ corresponding to
the left end-point $(0,1)$ of the edge $E^{(i-1)}$.
The equilibrium $(0,0)$ of ${\cal X}_h^{(i)}$ is a hyperbolic saddle
whose stable manifold and unstable manifold lie on the axes.
It is similar to blow up in the negative $v_{i-1}$-direction
by the transformation $u_{i-1}=w_i z_i$ and $v_{i-1}=-z_i^{p^{(i-1)}}$
and find that the equilibrium $(0,0)$ of the transformed system is a hyperbolic saddle
whose stable manifold and unstable manifold lie on the axes.
Moreover,
blowing up the degenerate equilibrium $(0,0)$ of the vector field ${\cal X}^{(i-1)}$
in the $u_{i-1}$-direction by the transformation $u_{i-1}=u_i$ and $v_{i-1}=u_i^{p^{(i-1)}}v_i$,
we obtain
\begin{align}
{\cal X}^{(i)}
=u_i^{\varrho^{(i)}}\frac{\partial}{\partial u_i}
+\big(\mathcal{P}_{E^{(i-1)}}(\phi^{(i-1)}+v_i)+O(u_i)\big)\frac{\partial}{\partial v_i},
\label{vf:Xk}
\end{align}
where a common factor $u_i^{p^{(i-1)}}$ is eliminated,
$\varrho^{(i)}:=n+2-2p^{(0)}-\sum_{j=1}^{i-1} p^{(i)}$
and
$$
\mathcal{P}_{E^{(i-1)}}(\phi^{(i-1)}+v_i)=g_{0,2}^{(i-1)}v_i^2.
$$
Note that the vector field ${\cal X}^{(i)}$ has the only equilibrium $(0,0)$
(degenerate with zero linear part) on the $v_i$-axis.
One more quasi-homogeneous blow-up to ${\cal X}^{(i_*)}$ ends the desingularization process
of the degenerate equilibrium $(0,0)$ of ${\cal X}^{(0)}$.
Actually,
applying the transformation
$u_{i_*}=w_{i_*+1} z_{i_*+1}$ and $v_{i_*}=z_{i_*+1}^{p^{(i_*)}}$
to blow up the degenerate equilibrium $(0,0)$ of ${\cal X}^{(i_*)}$
in the positive $v_{i_*}$-direction,
we obtain
{\small
\begin{align}
{\cal X}_h^{(i_*+1)}
=w_{i_*+1}\big(-g_{0,2}^{(i_*)}+o(1)\big)\frac{\partial }{\partial w_{i_*+1}}
+z_{i_*+1}\big(g_{0,2}^{(i_*)}+o(1)\big)\frac{\partial }{\partial z_{i_*+1}},
\label{vf:Xhis1}
\end{align}
}where a common factor $z_{i^*+1}^{p^{(i_*)}}/p^{(i_*)}$ is eliminated
and $g_{0,2}^{(i_*)}$ ($=a_n$) is the coefficient of $G_1^{(i_*)}$
corresponding to the left end-point $(0,1)$ of the edge $E^{(i_*)}$.
Clearly, the equilibrium $(0,0)$ of ${\cal X}_h^{(i_*+1)}$ is a hyperbolic saddle.
It is similar to blow up the degenerate equilibrium $(0,0)$ of ${\cal X}^{(0)}$
in the negative $v_{i_*}$-direction and find that
the equilibrium $(0,0)$ of the transformed system is also a hyperbolic saddle.
Moreover,
blowing up in the $u_{i_*}$-direction by the transformation
$u_{i_*}=u_{i_*+1}$ and $v_{i_*}=u_{i_*+1}^{p^{(i_*)}}v_{i_*+1}$,
we obtain
\begin{align}
{\cal X}^{(i_*+1)}
=u_{i_*+1}^{\varrho^{(i_*+1)}}\frac{\partial}{\partial u_{i_*+1}}
+\big(\mathcal{P}_{E^{(i_*)}}(v_{i_*+1})+O(u_{i_*+1})\big)\frac{\partial}{\partial v_{i_*+1}},
\label{vf:is1}
\end{align}
where a common factor $u_{i_*+1}^{p^{(i_*)}}$ is eliminated.
Note that $\mathcal{P}_{E^{(i_*)}}$ is a polynomial of degree 2 and
has no nonzero real roots by {\bf (M1)}.
Then ${\cal X}^{(i_*+1)}$ has no equilibria on the $v_{i_*+1}$-axis
and therefore
the degenerate equilibrium $(0,0)$ of ${\cal X}^{(0)}$ is desingularized.
After blowing down,
we find that the equilibrium $(0,0)$ of system~\eqref{PP3} is a saddle,
as seen in Fig.~\ref{fig:DX0}.
Consequently,
we see from the characterization given just below \eqref{PP3} that
system~\eqref{equ:cherkas} is monodromic at infinity.

\begin{figure}[h]
  \centering
  \includegraphics[height=1.5in]{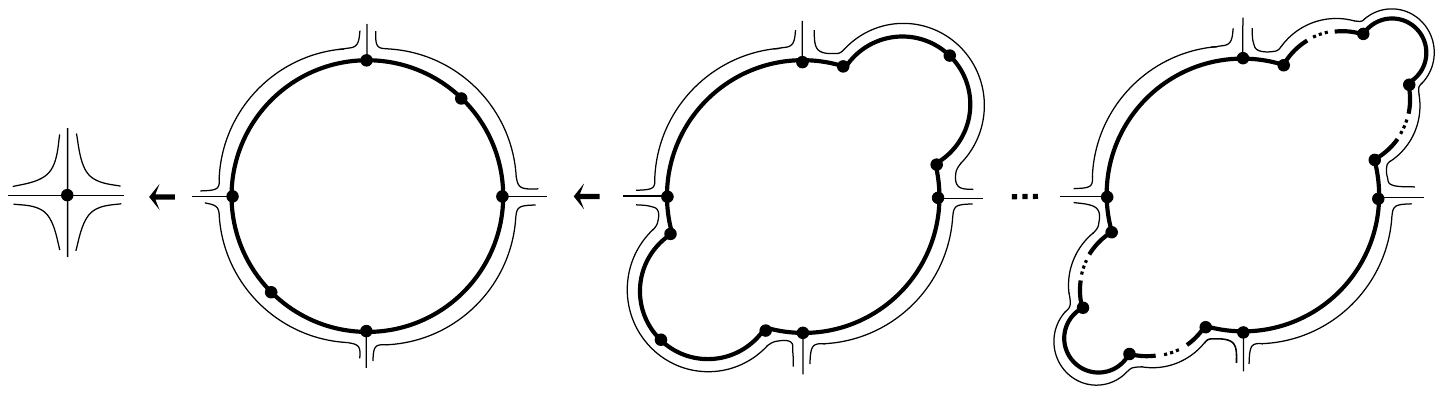}\\
  \caption{Desingularization process of ${\cal X}^{(0)}$.}\label{fig:des}
  \label{fig:DX0}
\end{figure}

If condition {\bf(M2)} holds,
it is clear that system~\eqref{PP1} has no orbits
in the region $\{(x,v)\in\mathbb{R}^2:v\ne 0\}$
approaching to any point on the $x$-axis.
Since $b_n^2-4a_n c_n<0$,
system~\eqref{PP2} has no equilibria on the $y$-axis and therefore
it has no orbits in the region $\{(u,y)\in\mathbb{R}^2:u\ne 0\}$
approaching to any point on the $y$-axis.
Moreover,
since $n$ is odd and $c_n<0$,
the equilibrium $(0,0)$ of system~\eqref{PP3} is a semi-hyperbolic saddle
by \cite[Theorem~2.19]{DLA} or \cite[Theorem~7.1, p.114]{ZZF}.
Thus, system~\eqref{equ:cherkas} is monodromic at infinity
because of the characterization given just below \eqref{PP3}.

If condition {\bf(M3)} holds,
it is clear that system~\eqref{PP1} has no orbits
in the region $\{(x,v)\in\mathbb{R}^2:v\ne 0\}$
approaching to any point on the $x$-axis.
Since $b_n^2-4a_n c_n=0$,
system~\eqref{PP2} has exactly one equilibrium $(0,y_*)$ on the $y$-axis,
degenerate with zero linear part,
where $y_*$ is defined just below \eqref{equ:X0Y0}.
Translating the equilibrium $(0,y_*)$ to the origin,
we obtain the corresponding vector field ${\cal Y}^{(0)}$.
Desingularizing ${\cal Y}^{(0)}$ similarly to ${\cal X}^{(0)}$ when condition {\bf(M1)} holds,
we obtain that system~\eqref{PP2} has no orbits in the region $\{(u,y)\in\mathbb{R}^2:u\ne 0\}$
approaching to the degenerate equilibrium $(0,y_*)$.
Therefore,
system~\eqref{PP2} has no orbits in the region $\{(u,y)\in\mathbb{R}^2:u\ne 0\}$
approaching to any point on the $y$-axis.
Moreover,
the same as case {\bf(M2)},
the equilibrium $(0,0)$ of system~\eqref{PP3} is a semi-hyperbolic saddle.
Thus, system~\eqref{equ:cherkas} is monodromic at infinity
and the sufficiency is proved.

In what follows, we consider the necessity.
Assume that system~\eqref{equ:cherkas} is monodromic at infinity
but none of conditions {\bf(M1)}, {\bf (M2)} and {\bf (M3)} holds.
Then there are seven cases:
\begin{equation*}
\begin{array}{lllll}
&\mbox{\bf(N1)}~n~\mbox{is even},
\\
&\mbox{\bf(N2)}~n~\mbox{is odd}~\mbox{and}~c_n>0,
\\
&\mbox{\bf(N3)}~n~\mbox{is odd}, c_n=0~\mbox{and}~b_n\ne 0,
\\
&\mbox{\bf(N4)}~n~\mbox{is odd}, c_n=b_n=0~\mbox{and}~a_n>0,
\\
&\mbox{\bf(N5)}~n~\mbox{is odd}, c_n=b_n=0,
a_n<0~\mbox{and}~{\cal X}^{(0)}~\mbox{does not satisfy {\bf(M1)}},
\\
&\mbox{\bf(N6)}~n~\mbox{is odd}, c_n< 0~\mbox{and}~b_n^2-4a_nc_n>0,
\\
&\mbox{\bf(N7)}~n~\mbox{is odd}, c_n< 0, b_n^2-4a_nc_n=0
~\mbox{and}~{\cal Y}^{(0)}~\mbox{does not satisfy {\bf(M3)}}.
\end{array}
\end{equation*}

In case {\bf(N1)},
on the invariant line $v=0$ of system~\eqref{PP3},
we have $\dot u=u^{n+2}$ with even $n$,
implying that the equilibrium $(0,0)$ of system~\eqref{PP3} is not a saddle.
We see from  the characterization given just below \eqref{PP3} that
system~\eqref{equ:cherkas} is not monodromic at infinity,
a contradiction.

In case {\bf(N2)},
by \cite[Theorem~2.19]{DLA} or \cite[Theorem~7.1, p.114]{ZZF},
the equilibrium $(0,0)$ of system~\eqref{PP3} is a semi-hyperbolic node and therefore
system~\eqref{equ:cherkas} is not monodromic at infinity,
a contradiction.

In case {\bf(N3)},
system~\eqref{PP2} has a unique equilibrium $(0,-a_n/b_n)$ on the $y$-axis,
which is semi-hyperbolic.
By \cite[Theorem~2.19]{DLA} or \cite[Theorem~7.1, p.114]{ZZF},
there are orbits in the region $\{(u,y)\in\mathbb{R}^2:u\ne 0\}$
approaching to the equilibrium $(0,-a_n/b_n)$.
We see from  the characterization given just below \eqref{PP3} that
system~\eqref{equ:cherkas} is not monodromic at infinity,
a contradiction.

In case {\bf(N4)},
we see from \eqref{PP3} that $\dot u=u^{n+2}$ with odd $n$ on the invariant line $v=0$
and $\dot v=a_nv^3$ with $a_n>0$ on the invariant line $u=0$.
It follows that the equilibrium $(0,0)$ of system~\eqref{PP3} is not a saddle
and therefore system~\eqref{equ:cherkas} is not monodromic at infinity,
a contradiction.


\begin{figure}[h]
    \centering
     \subcaptionbox{%
     }{\includegraphics[height=1in]{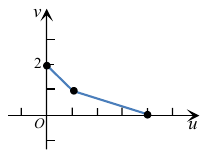}}~~~~~~
     \subcaptionbox{%
     }{\includegraphics[height=1in]{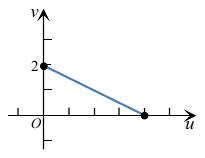}}
    \caption{Newton polygon ${\cal N}({\cal X}^{(0)})$.}
    \label{fig:NX10}
\end{figure}


In case {\bf(N5)},
we consider the vector field ${\cal X}^{(0)}$ generated by system~\eqref{PP3}.
Since $G_1^{(0)}(0,v)=a_nv^3$ and $v$ is a simple factor of $G_1^{(0)}(u,v)$,
the left end-point of ${\cal N}({\cal X}^{(0)})$ is the point $(0,2)$
and the right end-point lies on the $u$-axis.
It follows that the Newton polygon of ${\cal X}^{(0)}$ has either two edges or exactly one edge,
as illustrated by Fig.~\ref{fig:NX10}.
Since ${\cal X}^{(0)}$ does not satisfy condition {\bf(M1)},
the Newton polygon ${\cal N}({\cal X}^{(0)})$ has the following four subcases:
\begin{description}
  \item[(S1)] two edges; or

  \item[(S2)] one edge $E^{(0)}$, $h(E^{(0)})=2$ and $\omega(E^{(0)})$ is odd; or

  \item[(S3)] one edge $E^{(0)}$, $h(E^{(0)})=2$,
  $\omega(E^{(0)})=2p^{(0)}$ for an integer $p^{(0)}$,
  and $\mathcal{P}_{E^{(0)}}$ has a simple nonzero real root; or

  \item[(S4)] one edge $E^{(0)}$, $h(E^{(0)})=2$,
  $\omega(E^{(0)})=2p^{(0)}$ for an integer $p^{(0)}$,
  and $\mathcal{P}_{E^{(0)}}$ has a double nonzero real root $\phi^{(0)}$.
\end{description}

In subcase {\bf(S1)},
we assume that
the first edge $\tilde{E}^{(0)}$ of ${\cal N}({\cal X}^{(0)})$
links the vertex $(0,2)$ with the vertex $(\tilde{p}^{(0)},1)$ for an integer $\tilde{p}^{(0)}$.
Then
$$
\mathcal{P}_{\tilde{E}^{(0)}}(v)=g_{0,3}^{(0)} v^3+g_{\tilde{p}^{(0)},2}^{(0)}v^2,
$$
which has one (simple) nonzero real root $\tilde{\phi}^{(0)}$.
Applying the quasi-homogeneous blow-up
$u=u_1$ and $v=u_1^{\tilde{p}^{(0)}}v_1$
to the vector field ${\cal X}^{(0)}$,
we obtain the vector field
\begin{align*}
u_1^{n+2-2\tilde{p}^{(0)}}\frac{\partial}{\partial u_1}
+\{\mathcal{P}_{\tilde{E}^{(0)}}(v_1)+O(u_1)\}\frac{\partial}{\partial v_1},
\end{align*}
where a common factor is eliminated.
Since $\widetilde{P}_2(u)\not\equiv 0$ and $\deg\widetilde{P}_2\le n$,
the abscissa of the right end-point of ${\cal N}({\cal X}^{(0)})$ is not greater than $n$.
It follows that $2\tilde{p}^{(0)}<n$ and therefore $n+2-2\tilde{p}^{(0)}>2$.
So the above vector field has a semi-hyperbolic equilibrium
$(0,\tilde{\phi}^{(0)})$ on the $v_1$-axis.
By \cite[Theorem~2.19]{DLA} or \cite[Theorem~7.1, p.114]{ZZF},
there is a $C^\infty$ invariant curve (not the $v_1$-axis)
passing through the equilibrium $(0,\tilde{\phi}^{(0)})$
and therefore system~\eqref{PP3} has a $C^\infty$ invariant curve
\begin{align*}
v=\tilde{\phi}^{(0)}u^{\tilde{p}^{(0)}}+O(u^{\tilde{p}^{(0)}+1}).
\end{align*}
So the equilibrium $(0,0)$ of system~\eqref{PP3} is not a saddle.
We see from  the characterization given just below \eqref{PP3} that
system~\eqref{equ:cherkas} is not monodromic at infinity,
a contradiction.

In subcase {\bf(S2)},
we assume that the edge $E^{(0)}$ links the vertex $(0,2)$ with the vertex $(\hat{p}^{(0)},0)$
and $\hat{p}^{(0)}$ is odd.
Then
$$
\mathcal{P}_{E^{(0)}}(v)=g_{0,3}^{(0)} v^3+g_{\hat{p}^{(0)},2}^{(0)}v.
$$
Applying the blow-up $u=u_1^2$ and $v=u_1^{\hat{p}^{(0)}}v_1$ to
the vector field ${\cal X}^{(0)}$, we obtain
\begin{align}
\frac{1}{2}u_1^{2n+3-2\hat{p}^{(0)}}\frac{\partial }{\partial u_1}
+\big\{\mathcal{P}_{E^{(0)}}(v_1)+O(u_1)\big\}
\frac{\partial }{\partial v_1},
\label{equ:P1S1}
\end{align}
where a common factor is eliminated.
Similarly to the subcase {\bf(S1)},
we have $\hat{p}^{(0)}\le n$ and therefore $2n+3-2\hat{p}^{(0)}\ge 3$.
Note that $g^{(0)}_{0,3}=a_n<0$.
In the circumstance $g^{(0)}_{\hat{p}^{(0)},1}>0$,
vector field \eqref{equ:P1S1} has two semi-hyperbolic equilibria
$(0,\pm(-g^{(0)}_{\hat{p}^{(0)},1}/g^{(0)}_{0,3})^{1/2})$ on the $v_1$-axis.
By \cite[Theorem~2.19]{DLA} or \cite[Theorem~7.1, p.114]{ZZF},
vector field~\eqref{equ:P1S1} has $C^\infty$ invariant curves
$$
v_1=\pm(-g^{(0)}_{\hat{p}^{(0)},1}/g^{(0)}_{0,3})^{\frac{1}{2}}+O(u_1)
$$
passing through the two semi-hyperbolic equilibria separately.
It follows that system~\eqref{PP3} has invariant curves given by
\begin{align*}
v=|u|^{\hat{p}^{(0)}/2}
\big\{\pm(-g^{(0)}_{\hat{p}^{(0)},1}/g^{(0)}_{0,3})^{\frac{1}{2}}
+O(|u|^{\frac{1}{2}})\big\},~~~u\ge0,
\end{align*}
$C^\infty$ functions of $|u|^{1/2}$,
passing through the equilibrium $(0,0)$
in the half-plane $u\ge 0$.
Thus the equilibrium $(0,0)$ of system~\eqref{PP3} is not a saddle, a contradiction.
In the opposite circumstance $g^{(0)}_{\hat{p}^{(0)},1}<0$,
changing $u$ into $-u$ brings it to the former one.
Hence,
system~\eqref{PP3} has invariant curves given by
\begin{align*}
v=|u|^{\hat{p}^{(0)}/2}
\big\{\pm(g^{(0)}_{\hat{p}^{(0)},1}/g^{(0)}_{0,3})^{\frac{1}{2}}
+O(|u|^{\frac{1}{2}})\big\},~~~u\le 0,
\end{align*}
$C^\infty$ functions of $|u|^{1/2}$,
passing through the equilibrium $(0,0)$
in the half-plane $u\le 0$.
Thus the equilibrium $(0,0)$ of system~\eqref{PP3} is not a saddle
and therefore system~\eqref{equ:cherkas} is not monodromic at infinity,
a contradiction.

In subcase {\bf(S3)},
under the blow-up $u=u_1$ and $v=u_1^{p^{(0)}}v_1$,
the vector field ${\cal X}^{(0)}$ becomes
\begin{align*}
u_1^{n+2-2p^{(0)}}\frac{\partial}{\partial u_1}
+\{\mathcal{P}_{E^{(0)}}(v_1)+O(u_1)\}\frac{\partial}{\partial v_1},
\end{align*}
where a common factor is eliminated.
Since $\mathcal{P}_{E^{(0)}}$ has a simple nonzero real root,
still denoted by $\phi^{(0)}$,
the above system has a semi-hyperbolic equilibria $(0,\phi^{(0)})$ on the $v_1$-axis.
Similar to subcase {\bf(S1)},
system~\eqref{PP3} has a $C^\infty$ invariant curve
\begin{align*}
v=\phi^{(0)}u^{p^{(0)}}+O(u^{p^{(0)}+1}).
\end{align*}
Then the equilibrium $(0,0)$ of system~\eqref{PP3} is not a saddle
and therefore system~\eqref{equ:cherkas} is not monodromic at infinity,
a contradiction.


\begin{figure}[h]
    \centering
    \subcaptionbox{%
     }{\includegraphics[height=1in]{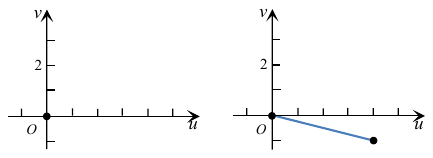}}~~~~~~
     \subcaptionbox{%
     }{\includegraphics[height=1in]{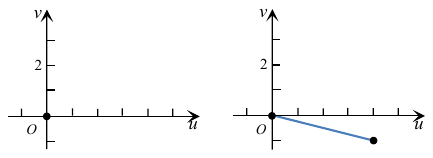}}
     \\
     \subcaptionbox{%
     }{\includegraphics[height=1in]{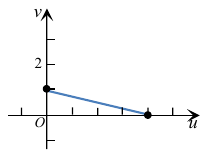}}~~~~~~
     \subcaptionbox{%
     }{\includegraphics[height=1in]{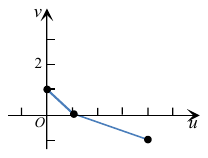}}~~~~~~
     \subcaptionbox{%
     }{\includegraphics[height=1in]{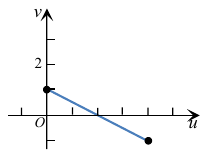}}
    \caption{Newton polygon ${\cal N}({\cal X}^{(i)})$ with $i \ge 1$.}
    \label{fig:NX11}
\end{figure}


In subcase {\bf(S4)},
applying the blow-up $u=u_1$ and $v=u_1^{p^{(0)}}(\phi^{(0)}+v_1)$
to the vector field ${\cal X}^{(0)}$, we obtain
\begin{align*}
{\cal X}^{(1)}
={\cal D}({\cal X}^{(0)};p^{(0)},1,\phi^{(0)})
=F_1^{(1)}(u_1,v_1)\frac{\partial}{\partial u_1}
+G_1^{(1)}(u_1,v_1)\frac{\partial}{\partial v_1},
\end{align*}
where a common factor $u_1^{2p^{(0)}}$ is eliminated and
$$
F_1^{(1)}(u_1,v_1)=u_1^{n+2-2p^{(0)}},~~~
G_1^{(1)}(u_1,v_1)=\mathcal{P}_{E^{(0)}}(\phi^{(0)}+v_1)+O(u_1).
$$
Note that $n+2-2p^{(0)}\ge 1$ and
$$
\mathcal{P}_{E^{(0)}}(\phi^{(0)}+v_1)=g_{0,3}^{(0)}v_1^2(\phi^{(0)}+v_1).
$$
Then ${\cal X}^{(1)}$ has support points $(n+2-2p^{(0)}-1,0)$ and $(0,1)$
and therefore the left end-point of ${\cal N}({\cal X}^{(1)})$ is
either the point $(0,0)$ if $n+2-2p^{(0)}=1$
or the point $(0,1)$ if $n+2-2p^{(0)}\ge 2$.
So, the Newton polygon ${\cal N}({\cal X}^{(1)})$ has 5 types:
the four {\bf(S1)}-{\bf(S4)} with $E^{(0)}$, $p^{(0)}$ and $\phi^{(0)}$
replaced with $E^{(1)}$, $p^{(1)}$ and $\phi^{(1)}$ respectively,
as illustrated by Fig.~\ref{fig:NX11}~(d) and (e),
and the one
\begin{description}
  \item[(S0)] either no edges, or one edge with height 1,
\end{description}
as illustrated by Fig.~\ref{fig:NX11}~(a), (b) and (c).

For {\bf(S0)},
if the left end-point of ${\cal N}({\cal X}^{(1)})$ is the point $(0,0)$,
then $F_1^{(1)}(u_1,v_1)=u_1$.
The Jacobian matrix at the equilibrium $(0,0)$ of ${\cal X}^{(1)}$ is given by
\begin{align*}
\left(
\begin{array}{lc}
1 & 0
\\
* & {\cal P}'_{E^{(0)}}(\phi^{(0)})
\end{array}
\right)
=
\left(
\begin{array}{lc}
1 & 0
\\
* & 0
\end{array}
\right).
\end{align*}
So the equilibrium $(0,0)$ is semi-hyperbolic
and there is an analytic invariant curve (not the $v_1$-axis)
passing through it by \cite[Theorem~2.19]{DLA} or \cite[Theorem~7.1, p.114]{ZZF}.
Hence the vector field ${\cal X}^{(0)}$ has an analytic invariant curve
$$
v=u^{p^{(0)}}(\phi^{(0)}+O(u)).
$$
On the contrary,
if the left end-point of ${\cal N}({\cal X}^{(1)})$ is the point $(0,1)$,
then ${\cal X}^{(1)}$ has the invariant line $v_1=0$ and therefore
${\cal X}^{(0)}$ has the invariant curve
$$
v=\phi^{(0)} u^{p^{(0)}}.
$$
Consequently,
the equilibrium $(0,0)$ of system~\eqref{PP3} is not a saddle and therefore
system~\eqref{equ:cherkas} is not monodromic at infinity,
a contradiction.

For {\bf(S1)},
the first edge $\tilde{E}^{(1)}$ of ${\cal N}({\cal X}^{(1)})$
links the vertex $(0,1)$ with the vertex $(\tilde{p}^{(1)},0)$ for an integer $\tilde{p}^{(1)}$.
Then
$$
\mathcal{P}_{\tilde{E}^{(1)}}(v_1)=g_{0,2}^{(1)} v_1^2+g_{\tilde{p}^{(1)},1}^{(1)}v_1,
$$
which has two (simple) real roots $0$ and
$\tilde{\phi}^{(1)}:=-g_{\tilde{p}^{(1)},1}^{(1)}/g_{0,2}^{(1)}$.
Applying the quasi-homogeneous blow-up
$u_1=u_2$ and $v_1=u_2^{\tilde{p}^{(1)}}v_2$ to ${\cal X}^{(1)}$,
we obtain
\begin{align*}
u_2^{n+2-2\tilde{p}^{(0)}-\tilde{p}^{(1)}}\frac{\partial}{\partial u_2}
+\{\mathcal{P}_{\tilde{E}^{(1)}}(v_2)+O(u_2)\}\frac{\partial}{\partial v_2},
\end{align*}
where a common factor is eliminated.
When $n+2-2\tilde{p}^{(0)}-\tilde{p}^{(1)}\ge 2$,
it is similar to ${\cal X}^{(0)}$ satisfying {\bf(S1)} that
the above vector field has a $C^\infty$ invariant curve (not the $v_1$-axis)
passing through the equilibrium $(0,\tilde{\phi}^{(1)})$.
After blowing down,
we see that system~\eqref{PP3} has a $C^\infty$ invariant curve of the form
\begin{align}
v=u^{p^{(0)}}(\phi^{(0)}+O(u)).
\label{vp0p0}
\end{align}
On the contrary,
when $n+2-2\tilde{p}^{(0)}-\tilde{p}^{(1)}=1$,
Jacobian matrices at the two equilibria $(0,0)$ and $(0,\tilde{\phi}^{(1)})$ are given by
\begin{align}
\left(
\begin{array}{lc}
1 & 0
\\
* & g_{\tilde{p}^{(1)},1}^{(1)}
\end{array}
\right)
~~~\mbox{and}~~~
\left(
\begin{array}{lc}
1 & 0
\\
* & -g_{\tilde{p}^{(1)},1}^{(1)}
\end{array}
\right),
\label{J-S-N}
\end{align}
respectively.
So one of the two equilibria is a hyperbolic saddle
and there is an analytic invariant curve (not the $v_2$-axis) passing through the saddle.
After blowing down,
we see that system~\eqref{PP3} has an analytic invariant curve of the form \eqref{vp0p0}.
As a consequence of the above two situations,
the equilibrium $(0,0)$ of system~\eqref{PP3} is not a saddle,
which leads to the same contradiction as above.

For {\bf(S2)} and {\bf(S3)},
we can prove similarly to the above subcase {\bf(S1)} that
${\cal X}^{(0)}$ has an invariant curve
$$
v=|u|^{p^{(0)}}(\phi^{(0)}+O(|u|^{\frac{1}{2}})),
$$
a $C^\infty$ function in $|u|^{\frac{1}{2}}$,
which leads to the same contradiction as above.

For {\bf(S4)},
we claim that
${\cal X}^{(i)}$ cannot satisfy {\bf(S4)}
with $E^{(0)}$, $p^{(0)}$ and $\phi^{(0)}$
replaced by $E^{(i)}$, $p^{(i)}$ and $\phi^{(i)}$, respectively,
for all $i=0,...,n$.
Otherwise,
note that
\begin{align}
{\cal X}^{(n)}=
u_n^{\varrho^{(n)}}
\frac{\partial}{\partial u_n}
+(\mathcal{P}_{E^{(n-1)}}(\phi^{(n-1)}+v_n)+O(u_n))
\frac{\partial}{\partial v_n},
\label{vf:Xi}
\end{align}
where $\varrho^{(n)}=n+2-2p^{(0)}-\sum_{i=1}^{n-1}p^{(i)}$.
Since $p^{(0)},...,p^{(n-1)}\ge 1$ and
$\varrho^{(n)}\ge 1$,
we have
$$
\varrho^{(n)}=1.
$$
Then the point $(0,0)$ is the left end-point of ${\cal N}({\cal X}^{(n)})$,
implying that ${\cal X}^{(n)}$ does not satisfy {\bf(S4)},
a contradiction.
Thus our claim given just before \eqref{vf:Xi} is proved.
The above analysis also suggests that the integer $i_*$ given in {\bf(M1)} satisfies that
$$
i_*\le n-1.
$$
By the claim,
if ${\cal X}^{(0)}$ does not satisfy {\bf(M1)},
then there is an integer $r\le n$ such that ${\cal X}^{(i)}$ satisfies {\bf(S4)}
with $E^{(0)}$, $p^{(0)}$ and $\phi^{(0)}$
replaced by $E^{(i)}$, $p^{(i)}$ and $\phi^{(i)}$ respectively,
for all $i=0,...,r-1$,
but ${\cal X}^{(r)}$ satisfies
one of conditions {\bf(S0)}-{\bf(S3)} with $E^{(0)}$ and $p^{(0)}$
replaced by $E^{(r)}$ and $p^{(r)}$, respectively, if necessary.
Similar to ${\cal X}^{(1)}$ satisfying one of {\bf(S0)}-{\bf(S3)} correspondingly,
${\cal X}^{(r)}$ has an invariant curve,
given by a $C^\infty$ function in $|u_r|^{1/2}$,
passing through the origin.
After blowing down,
${\cal X}^{(0)}$ has an invariant curve,
given by a nonzero $C^\infty$ function in $|u|^{1/2}$,
passing through the origin,
which leads to the same contradiction as above.

In case {\bf(N6)},
since $b_n^2-4a_nc_n> 0$,
system~\eqref{PP2} has two equilibria on the $y$-axis,
which are both semi-hyperbolic.
Similar to case {\bf(N3)},
system~\eqref{PP2} has orbits approaching to the two semi-hyperbolic equilibria
separately by \cite[Theorem~2.19]{DLA} or \cite[Theorem~7.1, p.114]{ZZF}.
Then system~\eqref{equ:cherkas} is not monodromic at infinity,
a contradiction.

In case {\bf(N7)},
we consider situations $b_n\ne 0$ and $b_n=0$ separately.
For the first situation,
we claim that ${\cal Y}^{(i)}$ can not satisfy {\bf(S4)}
with $E^{(0)}$, $p^{(0)}$ and $\phi^{(0)}$
replaced by $E^{(i)}$, $p^{(i)}$ and $\phi^{(i)}$, respectively,
for all $i=0,...,n+1$.
Otherwise,
{\small
\begin{align*}
{\cal Y}^{(n+1)}=
u_{n+1}^{\vartheta_0^{(n+1)}}U_{n+1}(u_{n+1},v_{n+1})\frac{\partial}{\partial u_{n+1}}
+\big(\mathcal{P}_{E^{(n)}}(\phi^{(n)}+v_{n+1})+O(u_{n+1})\big)
\frac{\partial}{\partial v_{n+1}},
\end{align*}
}where $\vartheta_0^{(n+1)}:=n+2-\sum_{i=0}^{n}p^{(i)}$,
$$
U_{n+1}(u_{n+1},v_{n+1}):=y_*+\phi^{(0)}u_{n+1}^{p^{(0)}}+O(u_{n+1}^{p^{(0)}+1}),
$$
and $y_*:=-b_n/(2c_n)\ne 0$.
Since $p^{(0)},...,p^{(n)}\ge 1$ and
$\vartheta_0^{(n+1)}\ge 1$,
we have
$$
\vartheta_0^{(n+1)}=1.
$$
Then the point $(0,0)$ is the left end-point of ${\cal N}({\cal Y}^{(n+1)})$,
implying that ${\cal Y}^{(n+1)}$ does not satisfy {\bf(S4)}, a contradiction.
This proves the above claim.
The above analysis also suggests that
in the situation $b_n\ne 0$
the integer $i_*$ given in {\bf(M3)} satisfies that
$$
i_*\le n.
$$
Moreover, by the above claim,
if ${\cal Y}^{(0)}$ does not satisfy {\bf(M3)},
there is an integer $r'\le n+1$ such that ${\cal Y}^{(i)}$ satisfies {\bf(S4)}
with $E^{(0)}$, $p^{(0)}$ and $\phi^{(0)}$
replaced by $E^{(i)}$, $p^{(i)}$ and $\phi^{(i)}$, respectively,
for all $i=0,...,r'-1$,
but ${\cal Y}^{(r')}$ satisfies
one of conditions {\bf(S0)}-{\bf(S3)} with $E^{(0)}$ and $p^{(0)}$
replaced by $E^{(r')}$ and $p^{(r')}$, respectively, if necessary.
Similar to case {\bf(N5)},
the vector field ${\cal Y}^{(0)}$ has an invariant curve,
given by a $C^\infty$ function in $|u|^{1/2}$,
passing through the origin.
Hence system~\eqref{equ:cherkas} is not monodromic at infinity,
a contradiction.

In the situation that $b_n=0$,
we claim that ${\cal Y}^{(i)}$ cannot satisfy {\bf(S4)}
with $E^{(0)}$, $p^{(0)}$ and $\phi^{(0)}$
replaced by $E^{(i)}$, $p^{(i)}$ and $\phi^{(i)}$, respectively,
for all $i=0,...,n+2$.
Otherwise,
since $y_*=0$, we have
{\small
\begin{align*}
{\cal Y}^{(n+2)}=
u_{n+2}^{\tau^{(n+2)}}\big(\phi^{(0)}+O(u_{n+2})\big)
\frac{\partial}{\partial u_{n+2}}
+\big(\mathcal{P}_{E^{(n+1)}}(\phi^{(n+1)}+v_{n+2})+O(u_{n+2})\big)
\frac{\partial}{\partial v_{n+2}},
\end{align*}
}where $\tau^{(n+2)}:=n+2-\sum_{i=1}^{n+1}p^{(i)}$.
Since $p^{(1)},...,p^{(n+1)}\ge 1$ and
$\tau^{(n+2)}\ge 1$,
we have
$$
\tau^{(n+2)}=1.
$$
Then the point $(0,0)$ is the left end-point of ${\cal N}({\cal Y}^{(n+2)})$,
implying that ${\cal Y}^{(n+2)}$ does not satisfy {\bf(S4)}, a contradiction.
Thus the above claim is proved.
The above analysis also suggests that
in the situation $b_n=0$ the integer $i_*$ given in {\bf(M3)} satisfies that
$$
i_*\le n+1.
$$
Moreover, by the above claim,
if ${\cal Y}^{(0)}$ does not satisfy {\bf(M3)},
then there is an integer $r''\le n+2$ such that ${\cal Y}^{(i)}$ satisfies {\bf(S4)}
with $E^{(0)}$, $p^{(0)}$ and $\phi^{(0)}$
replaced by $E^{(i)}$, $p^{(i)}$ and $\phi^{(i)}$, respectively,
for all $i=0,...,r''-1$,
but ${\cal Y}^{(r'')}$ satisfies
one of conditions {\bf(S0)}-{\bf(S3)} with $E^{(0)}$ and $p^{(0)}$
replaced by $E^{(r'')}$ and $p^{(r'')}$, respectively, if necessary.
Similar to case {\bf(N5)},
the vector field ${\cal Y}^{(0)}$ has an invariant curve,
given by a $C^\infty$ function in $|u|^{1/2}$,
passing through the origin,
and therefore system~\eqref{equ:cherkas} is not monodromic at infinity,
a contradiction.

As a consequence of the above discussion in cases {\bf (N1)}-{\bf (N7)},
we obtain the necessity of Lemma~\ref{lm:AIC} and the proof is completed.
\qquad$\Box$

Further, we obtain from Lemma~\ref{lm:AIC} a connection between the monodromy at infinity
and the non-existence of some fractional formal invariant curves
of vector fields ${\cal X}^{(0)}$ and ${\cal Y}^{(0)}$ defined in\eqref{equ:X0Y0},
which provides a straightforward computational method to
characterize the monodromy at infinity.
Let $\mathbb{R}[\![|u|^{\frac{1}{2}}]\!]$
be the ring of formal power series in $|u|^{\frac{1}{2}}$
with coefficients in $\mathbb{R}$.
We call $C^{(0)}(u,v):=v-\Phi(u)=0$ a {\it $\frac{1}{2}$-fractional formal invariant curve}
of the vector field ${\cal X}^{(0)}$
if $\Phi(u)\in\mathbb{R}[\![|u|^{\frac{1}{2}}]\!]$ and
${\cal X}^{(0)}{\cal C}^{(0)}|_{{\cal C}^{(0)}=0}=0.$

\begin{thm}
Cherkas system~\eqref{equ:cherkas} is monodromic at infinity if and only if
$n$ is odd and either
\begin{description}
  \item[(F1)]
  $c_n=b_n=0$, $a_n<0$,
  and the vector field ${\cal X}^{(0)}$ has no formal invariant curves of the form
  $v=\Phi(u)\in\mathbb{R}[\![|u|^{\frac{1}{2}}]\!]\backslash\{0\}$ with $\Phi(0)=0$; or

  \item[(F2)]
  $c_n<0$ and $b_n^2-4a_n c_n<0$; or

  \item[(F3)]
  $c_n<0$, $b_n^2-4a_n c_n=0$,
  and the vector field ${\cal Y}^{(0)}$ has no formal invariant curves of the form
  $y=\Psi(u)\in\mathbb{R}[\![|u|^{\frac{1}{2}}]\!]$ with $\Psi(0)=0$.
\end{description}
\label{thm:AIC}
\end{thm}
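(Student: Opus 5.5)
The plan is to derive Theorem~\ref{thm:AIC} from Lemma~\ref{lm:AIC}. Since (F2) coincides with (M2), and the conditions on $n$, $a_n$, $b_n$, $c_n$ in (F1) and (F3) are the same as those in (M1) and (M3), it suffices to prove two equivalences. First, under $c_n=b_n=0$, $a_n<0$ and $n$ odd, the Newton-polygon condition in (M1) holds for ${\cal X}^{(0)}$ if and only if ${\cal X}^{(0)}$ has no nonzero formal invariant curve $v=\Phi(u)\in\mathbb{R}[\![|u|^{\frac12}]\!]$ with $\Phi(0)=0$. Second, the analogous statement holds for ${\cal Y}^{(0)}$ under (M3).

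\emph{(M1) $\Rightarrow$ (F1).} Suppose $v=\Phi(u)\ne 0$ is such a formal invariant curve and write $\Phi=\phi|u|^{\alpha}+\cdots$ with $\phi\ne0$ and $\alpha\in\frac12\mathbb{Z}_{>0}$. Substituting into $G_1^{(0)}-\Phi'F_1^{(0)}=0$ and balancing the lowest-order terms (the standard Newton--Puiseux argument) forces $(\alpha,\phi)$ to be given by an edge of ${\cal N}({\cal X}^{(0)})$ of slope $-1/\alpha$, with $\phi$ a nonzero root of the associated edge polynomial $\mathcal{P}_{E^{(0)}}$. Under (M1) the only edge has slope $-1/p^{(0)}$, so $\alpha=p^{(0)}$ and $\phi=\phi^{(0)}$. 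The blow-up ${\cal D}({\cal X}^{(0)};p^{(0)},1,\phi^{(0)})$ then transforms the curve into a formal invariant curve $v_1=\Phi_1(u_1)$ of ${\cal X}^{(1)}$ with $\Phi_1(0)=0$.

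Here $\Phi_1\ne0$ as long as $\Phi$ is not exactly $\phi^{(0)}u^{p^{(0)}}$. That exceptional case would make $v_1=0$ invariant for ${\cal X}^{(1)}$, which contradicts the support point $(\varrho-1,0)$ required by the single-edge shape in (M1). Iterating, we reach ${\cal X}^{(i_*)}$, whose unique edge polynomial has no nonzero real root. So ${\cal X}^{(i_*)}$ admits no nonzero formal invariant curve through the origin. This is a contradiction, and it also rules out the remainder becoming exactly zero at some intermediate step.

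\emph{(F1) $\Rightarrow$ (M1).} Assume (M1) fails, i.e.\ case (N5) occurs. The proof of Lemma~\ref{lm:AIC} shows that in every subcase (S0)--(S3), reached after finitely many (S4)-steps (at most $n$ of them, by the claim around \eqref{vf:Xi}), there is a $C^\infty$ invariant curve of ${\cal X}^{(0)}$ that is a nonzero function of $|u|^{1/2}$ vanishing at $0$. Its Taylor series in $|u|^{1/2}$ is then a formal invariant curve in $\mathbb{R}[\![|u|^{\frac12}]\!]\setminus\{0\}$, so (F1) fails.

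I must check that this Taylor series is nonzero. Its leading term $\phi^{(0)}|u|^{p^{(0)}}$ (or $\tilde\phi^{(0)}u^{\tilde p^{(0)}}$, or $\pm(\cdot)^{1/2}|u|^{\hat p^{(0)}/2}$) is visibly nonzero. I must also check invariance on the correct half-line: in (S2) the curve lives only on $u\ge0$ or only on $u\le0$, so I will interpret $|u|$ as $u$ or $-u$ accordingly. The invariant curves constructed from the semi-hyperbolic centre manifolds are $C^\infty$ functions of $u_1=|u|^{1/2}$, and $C^\infty$ invariance passes to the Taylor jet. This gives the formal curve.

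The ${\cal Y}^{(0)}$ case is identical, using (N7) and the bounds $r'\le n+1$ and $r''\le n+2$. The one change is that the curve $y=0$ itself, i.e.\ $\Psi=0$, is allowed in (F3). Indeed, ${\cal Y}^{(0)}$ never has $y=0$ invariant, since $F_2^{(0)}$ carries the factor $(y+y_*)$ and $G_2^{(0)}(u,0)\not\equiv0$; otherwise non-monodromy would already follow.

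The main obstacle is the forward direction: making the formal Newton--Puiseux matching rigorous in the ring $\mathbb{R}[\![|u|^{\frac12}]\!]$. I will handle this by the same quasi-homogeneous blow-ups applied formally, tracking that each step lowers $\varrho^{(i)}$, so that the process terminates.
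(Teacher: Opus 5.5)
Your proposal is correct and follows essentially the same route as the paper. You reduce to \textbf{(M1)}$\Leftrightarrow$\textbf{(F1)} and \textbf{(M3)}$\Leftrightarrow$\textbf{(F3)}. You get the forward direction by leading-order Newton-polygon balancing and iterated blow-ups ${\cal D}(\cdot\,;p^{(i)},1,\phi^{(i)})$, with the contradiction arising at ${\cal X}^{(i_*)}$ or when the series terminates early. You get the converse from the $|u|^{1/2}$-jets of the invariant curves built in cases \textbf{(N5)}/\textbf{(N7)}.

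One small correction concerns the invariance of $v_i=0$ in the terminating case. What rules it out is the right end-point $(2p^{(i)},-1)$ of $E^{(i)}$, i.e.\ $G_1^{(i)}(u_i,0)\not\equiv 0$. It is not the point $(\varrho-1,0)$, which comes from $F_1^{(i)}=u_i^{\varrho^{(i)}}$ and is always present. Likewise, for ${\cal Y}^{(0)}$ the factor $(y+y_*)$ in $F_2^{(0)}$ plays no role; only $G_2^{(0)}(u,0)\not\equiv0$ matters, and under \textbf{(M3)} this is guaranteed by the height-$2$ edge.
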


{\bf Proof.}
It suffices to prove that
when $n$ is odd
conditions {\bf(F1)} and {\bf (F3)} are equivalent to
conditions {\bf (M1)} and {\bf (M3)}, respectively,
given in Lemma~\ref{lm:AIC}.
We first show that {\bf (M1)} implies {\bf (F1)}.
If {\bf (F1)} is invalid,
then ${\cal X}^{(0)}$ has a real formal invariant curve
$
{\cal C}^{(0)}(u,v):=v-\Phi^{(0)}(u)=0
$
defined for $u\ge 0$ or $u\le 0$,
where $\Phi^{(0)}(u)\in\mathbb{R}[\![|u|^{\frac{1}{2}}]\!]\setminus\{0\}$.
We only consider the case $u\ge 0$ since the case $u\le 0$ is similar.
Assume that
\begin{align}
\Phi^{(0)}(u)=\phi_0 u^{\iota^{(0)}_0}
+\sum_{k=1}^{s} \phi_k u^{\iota^{(0)}_k},
\label{Phi00}
\end{align}
where $s\in\mathbb{N}\cup\{+\infty\}$,
$0<\iota^{(0)}_0<\iota^{(0)}_1<\cdots<\iota^{(0)}_s$,
$2\iota^{(0)}_k \in \mathbb{N}$ and $\phi_k\ne 0$ for all $k\ge 0$.
The invariance suggests that
$$
0={\cal X}^{(0)}{\cal C}^{(0)}|_{{\cal C}^{(0)}=0}=
\left.
\left\{-F_1^{(0)}(u,v)\frac{d\Phi^{(0)}(u)}{du}+G_1^{(0)}(u,v)\right\}
\right|_{v=\Phi^{(0)}(u)}.
$$
Since {\bf (M1)} holds,
the Newton polygon ${\cal N}({\cal X}^{(0)})$ has exactly one edge $E^{(0)}$,
linking the vertex $(0,2)$ with the vertex $(2p^{(0)},0)$.
Then
\begin{align}
0={\cal X}^{(0)}{\cal C}^{(0)}|_{{\cal C}^{(0)}=0}
=\left\{
\begin{array}{lllll}
g^{(0)}_{0,3} \phi_0^3 u^{3\iota^{(0)}_0}+o(u^{3\iota^{(0)}_0})
&\mbox{if}~\iota^{(0)}_0<p^{(0)},
\\
\mathcal{P}_{E^{(0)}}(\phi_0)u^{3\iota^{(0)}_0}+o(u^{3\iota^{(0)}_0})
&\mbox{if}~\iota^{(0)}_0=p^{(0)},
\\
g^{(0)}_{2p^{(0)},1}\phi_0u^{2p^{(0)}+\iota^{(0)}_0}+o(u^{2p^{(0)}+\iota^{(0)}_0})
&\mbox{if}~\iota^{(0)}_0>p^{(0)},
\end{array}
\right.
\label{0XCC0}
\end{align}
which implies that $\iota^{(0)}_0=p^{(0)}$ and $\mathcal{P}_{E^{(0)}}(\phi_0)=0$.

In the case $i_*=0$ in {\bf (M1)},
we obtain a contradiction
since $\mathcal{P}_{E^{(i_*)}}=\mathcal{P}_{E^{(0)}}$ has no nonzero real roots.
Then {\bf (M1)} implies {\bf (F1)} in the case $i_*=0$.

In the oppositive case $i_*\ge 1$ in {\bf (M1)},
we have $\phi_0=\phi^{(0)}$,
the only nonzero real root of $\mathcal{P}_{E^{(0)}}$.
Further,
under the blow-up transformation $u=u_1$ and $v=u_1^{p^{(0)}}(\phi^{(0)}+v_1)$,
the vector field ${\cal X}^{(0)}$ becomes the vector field
${\cal X}^{(1)}={\cal D}({\cal X}^{(0)};p^{(0)},1,\phi^{(0)})$.
Correspondingly,
this transformation brings the formal curve ${\cal C}^{(0)}(u,v)=0$
into the formal curve
$
{\cal C}^{(1)}(u_1,v_1):=v_1-\Phi^{(1)}(u_1)=0,
$
where
$$
\Phi^{(1)}(u_1):=\frac{\Phi^{(0)}(u_1)}{u_1^{p^{(0)}}}-\phi_0
=\phi_1 u_1^{\iota^{(1)}_1}
+\sum_{k=2}^{s} \phi_k u_1^{\iota^{(1)}_k}
\in\mathbb{R}[\![|u_1|^{\frac{1}{2}}]\!]
$$
with $\iota^{(1)}_k:=\iota^{(0)}_k-p^{(0)}$ for all $k\ge 1$.
Since the formal curve ${\cal C}^{(0)}=0$ is invariant
under the vector field ${\cal X}^{(0)}$,
the formal curve ${\cal C}^{(1)}=0$ is correspondingly invariant
under the vector field ${\cal X}^{(1)}$.
Repeating the above process on ${\cal X}^{(0)}$ and ${\cal C}^{(0)}=0$ for $i\ge 1$,
we obtain from the invariance that
\begin{align}
0={\cal X}^{(i)}{\cal C}^{(i)}|_{{\cal C}^{(i)}=0}
=\left\{
\begin{array}{lllll}
g^{(i)}_{0,2} \phi_i^2 u_i^{2\iota^{(i)}_i}+o(u_i^{2\iota^{(i)}_i})
&\mbox{if}~\iota^{(i)}_i<p^{(i)},
\\
\mathcal{P}_{E^{(i)}}(\phi_i)u_i^{2\iota^{(i)}_i}+o(u_i^{2\iota^{(i)}_i})
&\mbox{if}~\iota^{(i)}_i=p^{(i)},
\\
g^{(i)}_{2p^{(i)},0} u_i^{2p^{(i)}}+o(u_i^{2p^{(i)}})
&\mbox{if}~\iota^{(i)}_i>p^{(i)},
\end{array}
\right.
\label{0XCCi}
\end{align}
where
${\cal X}^{(i)}={\cal D}({\cal X}^{(i-1)};p^{(i-1)},1,\phi^{(i-1)})$,
$E^{(i)}$ is the only edge of ${\cal N}({\cal X}^{(i)})$,
linking the vertex $(0,1)$ with the vertex $(2p^{(i)},-1)$,
and ${\cal C}^{(i)}(u_i,v_i):=v_i-\Phi^{(i)}(u_i)$ with
$$
\Phi^{(i)}(u_i):=\frac{\Phi^{(i-1)}(u_i)}{u_i^{p^{(i-1)}}}-\phi_{i-1}
=\phi_i u_i^{\iota^{(i)}_i}
+\sum_{k=i+1}^{s} \phi_{k} u_i^{\iota^{(i)}_{k}}
\in\mathbb{R}[\![|u_i|^{\frac{1}{2}}]\!]
$$
and $\iota^{(i)}_k:=\iota^{(i-1)}_k-p^{(i-1)}$ for all $k\ge i$.
It follows from \eqref{0XCCi} that
$\iota^{(i)}_i=p^{(i)}$ and $\mathcal{P}_{E^{(i)}}(\phi_i)=0$,
i.e., $\phi_i=\phi^{(i)}$, the only nonzero real root of $\mathcal{P}_{E^{(i)}}$.
Then the above process can be repeated for all $i=1,2,...,\max\{s,i_*\}$.

For the situation $s< i_*$,
we have ${\cal C}^{(s+1)}(u_{s+1},v_{s+1})=v_{s+1}-\Phi^{(s+1)}(u_{s+1})=v_{s+1}$
and therefore
\begin{align}
{\cal X}^{(s+1)}{\cal C}^{(s+1)}|_{{\cal C}^{(s+1)}=0}=G_1^{(s+1)}(u_{s+1},0)
=g_{2p^{(s+1)},0}^{(s+1)}u_{s+1}^{2p^{(s+1)}}+O(u_{s+1}^{2p^{(s+1)}+1})\ne 0,
\label{s<i*}
\end{align}
a contradiction,
where $g_{2p^{(s+1)},0}^{(s+1)}$ is the coefficient of $G_1^{(s+1)}$
corresponding the right end-point $(2p^{(s+1)},-1)$ of the edge $E^{(s+1)}$.

For the opposite situation $s\ge i_*$,
since the polynomial $\mathcal{P}_{E^{(i_*)}}$ has no nonzero real roots,
equation \eqref{0XCCi} can not hold for $i=i_*$,
a contradiction.
Consequently, {\bf (M1)} implies {\bf (F1)}.

On the contrary,
we show that {\bf (F1)} implies {\bf (M1)}.
For a reduction to absurdity,
we assume that {\bf (M1)} does not hold.
By the claim given just before \eqref{vf:Xi},
there is an integer $r\le n$ such that
${\cal X}^{(i)}$ satisfies {\bf(S4)}
with $E^{(0)}$, $p^{(0)}$ and $\phi^{(0)}$
replaced by $E^{(i)}$, $p^{(i)}$ and $\phi^{(i)}$, respectively,
for all $i=0,...,r-1$,
but ${\cal X}^{(r)}$ satisfies one of {\bf(S0)}-{\bf(S3)}
with $E^{(0)}$ and $p^{(0)}$
replaced by $E^{(r)}$ and $p^{(r)}$, respectively, if necessary.
It is shown in case {\bf(N5)} of the proof of Lemma~\ref{lm:AIC} that
${\cal X}^{(r)}$ has an invariant curve,
given by a $C^\infty$ function in $|u_{r}|^{1/2}$,
passing through the origin.
Hence, ${\cal X}^{(r)}$ has a $\frac{1}{2}$-fractional formal invariant curve
$$
v_{r}=\Phi_{r}(u_{r})\in\mathbb{R}[\![|u_{r}|^{\frac{1}{2}}]\!]
$$
with $\Phi_{r}(0)=0$.
After blowing down,
${\cal X}^{(0)}$ has a $\frac{1}{2}$-fractional formal invariant curve
$$
v=\Phi(u)=|u|^{p^{(0)}}\big(\phi^{(0)}+O(|u|^{\frac{1}{2}})\big)
\in\mathbb{R}[\![|u|^{\frac{1}{2}}]\!]\backslash\{0\}
$$
a contradiction to {\bf(F1)}.
Consequently,
we obtain the equivalence between {\bf(M1)} and {\bf(F1)}.
It is similar to show the equivalence between {\bf(M3)} and {\bf(F3)}
when $n$ is odd.
Thus the proof of this theorem is completed.
\qquad$\Box$

\begin{rmk}
{\rm
If Cherkas system~\eqref{equ:cherkas} is not monodromic at infinity,
then ${\cal X}^{(0)}$ would have an invariant curve $v=\Phi(u)\notin\mathbb{R}[\![|u|^{\frac{1}{2}}]\!]\backslash\{0\}$ with $\Phi(0)=0$.
This happens when a hyperbolic node appears in the desingularization process.
For example,
if $\lambda:=g_{\tilde{p}^{(1)},1}^{(1)}$ in \eqref{J-S-N} is irrational,
then the equilibrium $(0,0)$ of ${\cal X}^{(1)}$ is a non-resonant node.
By the $C^\infty$ normal form result given in \cite[Section~2.7]{DLA},
${\cal X}^{(1)}$ is of the form
$x\frac{\partial}{\partial x}+\lambda y\frac{\partial}{\partial y}$
under some $C^\infty$ coordinates
and has invariant curves $y=c x^{\lambda}$ for all $c\in \mathbb{R}$.
This suggests the existence of the above invariant curve of ${\cal X}^{(0)}$.
More complicated invariant curves appear in the resonant node case.
}
\label{rmk:node-ln}
\end{rmk}

\begin{rmk}
{\rm
When applying {\bf(F1)} of Theorem~\ref{thm:AIC},
we only need to compute $\frac{1}{2}$-fractional formal invariant curves
by the method of undetermined coefficients up to order $n$.
Actually,
consider a $\frac{1}{2}$-fractional formal curve $v=\Phi^{(0)}(u)$ of the form \eqref{Phi00}.
If {\bf(M1)} holds,
we see from \eqref{0XCC0}-\eqref{s<i*} that $s\ge i_*$,
\begin{align*}
\iota_k^{(0)}=p^{(0)}+\cdots+p^{(k)}~~~\mbox{and}~~~{\cal P}_{E^{(k)}}(\phi_k)=0
~~~\mbox{for all}~k=0,...,i_*.
\end{align*}
The equation ${\cal P}_{E^{(k)}}(\phi_k)=0$ has the solution
$\phi_k=\phi^{(k)}$ for all $k=0,...,i_*-1$,
however, the equation ${\cal P}_{E^{(i_*)}}(\phi_k)=0$ has no nonzero real solutions.
Moreover,
we see from expression \eqref{vf:Xhis1} of ${\cal X}^{(i_*+1)}$ that
$$
\varrho^{(i_*+1)}=n+2-p^{(0)}-\sum_{j=0}^{i_*} p^{(i)}\ge 1
$$
and therefore $\iota_{i_*}^{(0)}\le n+1-p^{(0)}\le n$.
Thus the above claimed is proved.
Similarly,
when applying {\bf(F3)} of Theorem~\ref{thm:AIC},
we only need to compute up to order $n+1$ (or $n+2$) for $b_n\ne 0$ (or $b_n=0$).
}
\end{rmk}

\section{Global center}
\setcounter{equation}{0}
\setcounter{lm}{0}
\setcounter{thm}{0}
\setcounter{rmk}{0}
\setcounter{df}{0}
\setcounter{cor}{0}
\setcounter{pro}{0}

By Lemma~\ref{lm:NtoC},
if system~\eqref{equ:Newton} has a global center at the origin,
then it is of the form \eqref{equ:cherkas}.
Since $P_0\ne 0$ as assumed just below \eqref{equ:cherkas},
if $P_1\ne 0$ then we assume that
$$
P_0(x)=a_{\iota_0}x^{\iota_0}+O(x^{\iota_0+1})~~~\mbox{and}~~~
P_1(x)=b_{\iota_1}x^{\iota_1}+O(x^{\iota_1+1})
$$
with nonzero coefficients $a_{\iota_0}$ and $b_{\iota_1}$
and integers $\iota_0$ and $\iota_1$.
By \cite[Theorem~3.5]{DLA},
system~\eqref{equ:cherkas} is monodromic at the origin
if and only if $\iota_0=2\nu-1$ for an integer $\nu\ge 1$ and moreover either
one of the following two conditions holds:
\begin{description}
  \item[(W1)]
  $\nu=1$, $a_{\iota_0}<0$ and $P_1(0)=0$; or

  \item[(W2)]
  $\nu\ge 2$, $a_{\iota_0}<0$,
   and either $\iota_1>\nu$,
   or $\iota_1=\nu$ and $b_{\iota_1}^2+4\nu a_{\iota_0}<0$,
   or $P_1=0$,
\end{description}
which are devoted to the non-degenerate case and the nilpotent degenerate case, respectively.
Further,
\cite[Theorem~3]{CS08} gives a complete characterization of system~\eqref{equ:cherkas}
having a local center at the origin.

\begin{thm}
Cherkas system~\eqref{equ:cherkas} satisfying either {\bf(W1)} or {\bf(W2)}
has a center at the origin if and only if
one of the following $($possibly overlapping$)$ conditions is satisfied:
\begin{description}
\item[(C1)]
The system has a symmetry in the $x$-axis, i.e., $P_1=0$;

\item[(C2)]
$P_i(x)=A_i(r(x))r'(x)$ for all $i=0,1,2$,
where $A_i(x)$\,s and $r(x)$ are all polynomials such that $A_0(0)<0$ and
$r(x)=x^{2\nu}+O(x^{2\nu+1})$;

\item[(C3)]
There is a local first integral of Darboux type,
i.e., first integral of the form
$\exp(f/g)\prod h_i^{k_i}$
with real polynomials $f,g$ and $h_i$ and real constants $k_i$.
\end{description}
\label{Th:CS08}
\end{thm}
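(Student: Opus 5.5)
This statement is \cite[Theorem~3]{CS08}, and the paper only quotes it, so my plan follows the standard Cherkas--Christopher argument. The first step is to reduce the Cherkas system \eqref{equ:cherkas} to a Li\'enard-type equation. Along orbits with $y\neq0$, the substitution $z=y^2/2$ (equivalently, writing $\frac{dy}{dx}=\frac{P_0+P_1y+P_2y^2}{y}$) and multiplication by the integrating factor $e^{-2\int_0^x P_2}$ turn the system into an equivalent Li\'enard system
\[
\dot x=Y,\qquad \dot Y=-g(x)-f(x)Y,
\]
with
\[
g(x)=-P_0(x)e^{-2\int_0^xP_2},\qquad f(x)=-P_1(x)e^{-\int_0^xP_2}.
\]
This change is analytic near the origin and preserves orbits. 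It preserves monodromy, and conditions {\bf(W1)} and {\bf(W2)} translate into the usual monodromy conditions on $g$ and $f$: $g(x)=-a_{\iota_0}x^{2\nu-1}+\cdots$, with $f$ vanishing to the required order.

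Sufficiency is handled one condition at a time. Under {\bf(C1)}, the vector field is reversible with respect to $(x,y)\mapsto(x,-y)$. Together with monodromy, the reversibility forces a center by the standard symmetry argument. Under {\bf(C2)}, the map $x\mapsto r(x)$ pulls back the system from $\dot X=y$, $\dot y=A_0(X)+A_1(X)y+A_2(X)y^2$ after a time rescaling. Since $r(x)=x^{2\nu}+\cdots$ folds a neighbourhood of $0$ two-to-one onto $X\ge0$, every orbit is symmetric under the involution $x\mapsto\sigma(x)$ defined by $r(\sigma(x))=r(x)$. This gives an analytic reversibility, and hence a center. Under {\bf(C3)}, a nonconstant Darboux first integral that is analytic or meromorphic near a monodromic point excludes a focus, because the first integral would have to be constant along a spiral accumulating at the origin.

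For necessity, I will follow Christopher's approach for Li\'enard centers via the Cherkas transformation. By the classical theorem of Cherkas, $\dot x=y$, $\dot y=-g-fy$ has a center if and only if there is an analytic involution $\sigma$ with $G(\sigma(x))=G(x)$ and $F(\sigma(x))=F(x)$, where $G=\int_0^x g$ and $F=\int_0^x f$. In the nilpotent case {\bf(W2)}, $G$ has order $2\nu$ and the same statement should hold after the usual extension. Because $P_i$ are polynomials, $G$ and $F$ are polynomials times exponentials of polynomials. The key step is to show that the analytic condition ``$F$ and $G$ are both $\sigma$-invariant'' forces one of two algebraic alternatives. Either $\sigma$ is the identity on the $F$-side, which means $F$ is constant and hence $P_1=0$, giving {\bf(C1)}. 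Or $\sigma$ is an algebraic involution coming from a common polynomial composition factor $r$, giving {\bf(C2)}. Otherwise, I expect the exponential factors to produce a Darboux-type first integral, giving {\bf(C3)}.

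The main obstacle is this last step: proving that the involution is algebraic, or that its failure yields a Darboux integral. I would attack it using L\"uroth-type and Ritt decomposition results for the field generated by $F$ and $G$. The conditions $F(\sigma(x))=F(x)$ and $G(\sigma(x))=G(x)$ imply that $F$ and $G$ lie in the fixed field of $\sigma$. When this field is generated by a polynomial $r$, we are in case {\bf(C2)}. When the exponential factor $e^{-2\int P_2}$ prevents this, I would read off an integrating factor of the form $e^{h}\prod h_i^{k_i}$ and build the Darboux first integral directly.
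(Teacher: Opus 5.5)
The paper gives no proof of its own here. It quotes \cite[Theorem~3]{CS08} and only remarks that the argument carries over to nilpotent centers. Your scaffolding is the right framework: the Li\'enard reduction via $Y=ye^{-\wp}$ with $\wp=\int_0^xP_2$, Cherkas' criterion $F\circ\sigma=F$, $G\circ\sigma=G$, and reversibility for {\bf(C1)} and {\bf(C2)}. However, the necessity direction, which is the real content, has a genuine gap exactly where you place the ``main obstacle''.

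Since $F=-\int_0^xP_1e^{-\wp}$ and $G=-\int_0^xP_0e^{-2\wp}$ are transcendental when $P_2\not\equiv0$, neither L\"uroth's theorem (about subfields of $\mathbb{C}(x)$) nor Ritt's theory (about polynomial composition) says anything about ``the field generated by $F$ and $G$''. The germs fixed by $\sigma$ are just the series in $s^2$, where $s$ is a local coordinate with $\sigma(s)=-s$, and nothing guarantees that any nonconstant rational function is among them. So your trichotomy has no mechanism behind it.

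The missing idea is to manufacture a \emph{rational} $\sigma$-invariant. Differentiating $F\circ\sigma=F$ and $G\circ\sigma=G$ shows that $\Psi:=G'/F'=\frac{P_0}{P_1}e^{-\wp}$ is $\sigma$-invariant. Hence $dF/d\Psi$ is invariant too, and it is the rational function
\[
R=\frac{F'}{\Psi'}=\frac{P_1^3}{P_2P_0P_1+P_0P_1'-P_1P_0'}.
\]
This gives a dichotomy.
\begin{itemize}
\item If $R\equiv1/e$ is constant, this is exactly the identity \eqref{PPPPPPP} that the paper uses to detect {\bf(C3)}. Then $\Psi=eF+c$, so $G=\frac e2F^2+cF$. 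In the variables $(F,\,Y+F)$ the system becomes affine linear after dividing by $F'$, and its elementary first integral pulls back to one of Darboux type. For $e=1/4$ this produces the invariant curve $y+2P_0/P_1=0$ used in the proof of Theorem~\ref{TH:M1}.
\item If $R$ is nonconstant, then $R\circ\sigma=R$ makes $\sigma$ algebraic. L\"uroth now applies to the subfield of $\mathbb{C}(x)$ fixed by $\sigma$, giving $\mathbb{C}(r)$. You must still show three things:
  \begin{itemize}
  \item $\wp$ is invariant, e.g.\ via the invariant rational function $d\log\Psi/dR$ and by comparing non-logarithmic parts of $\log\Psi=\log(P_0/P_1)-\wp$;
  \item $r$ can be taken polynomial;
  \item $P_2\,dx=d\wp$, $P_1\,dx=-e^{\wp}dF$ and $P_0\,dx=-e^{2\wp}dG$ are invariant polynomial $1$-forms, i.e.\ pullbacks under $r$.
  \end{itemize}
\end{itemize}
None of this is in your plan.

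There are also two smaller problems.

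First, your sufficiency argument for {\bf(C3)} fails as written. A Darboux function with real non-integer exponents, or with $g(0)=0$, need not be analytic, meromorphic or even continuous at the origin. Moreover, a focus can carry a nonconstant first integral on a punctured neighbourhood, e.g.\ $\sin(\log\rho+\theta)$ for $\dot\rho=-\rho$, $\dot\theta=1$. So ``constant along a spiral'' proves nothing. What works: on a transversal ray, $\log|H|$ is a finite sum of rational functions and logarithms in the ray parameter. Hence it is strictly monotone near $0$ unless constant, which contradicts $H\circ\Pi=H$ for a return map with $\Pi(s)\neq s$.

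Second, in the nilpotent case the L\"uroth generator has order $2k$ with $k\mid\nu$, not necessarily $2\nu$. For instance, odd $P_0,P_1,P_2$ give $\sigma(x)=-x$, $r=x^2$, and $A_0(0)=0$ when $\nu\ge2$. Also, $P_1=2x^5$ with $\nu=2$ cannot be written as $A_1(r)r'$ with $r$ of order $4$. So the normalization $r=x^{2\nu}+O(x^{2\nu+1})$, $A_0(0)<0$ in {\bf(C2)} does not follow from ``the usual extension''. It needs an argument of its own, and the paper only asserts it in a remark.
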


\begin{rmk}
{\rm
Although \cite[Theorem~3]{CS08} is only stated for
non-degenerate centers of system~\eqref{equ:cherkas},
the proof still holds for nilpotent centers,
the same as non-degenerate centers and
nilpotent centers for Li\'enard system (\cite{Chris}).
Here we rewrite \cite[Theorem~3]{CS08} as Theorem~\ref{Th:CS08}
by revising $r(x)=x^2+O(x^3)$ as $r(x)=x^{2\nu}+O(x^{2\nu+1})$ in {\bf(C2)},
which unifies the characterization of non-degenerate centers and nilpotent centers.
}
\end{rmk}

\begin{rmk}
{\rm
As indicated in \cite[p.343]{CS08},
we can determine computationally
whether a particular system of the form \eqref{equ:cherkas}
has a center or not.
Firstly,
we check whether $P_1= 0$, i.e., case~{\bf(C1)} holds.
Secondly,
we check whether
\begin{align}
P_2P_0P_1+P_0P'_1-P_1P'_0=e P_1^3,
\label{PPPPPPP}
\end{align}
holds for some constant $e$, i.e., case~{\bf(C3)} holds.
Finally,
we seek polynomials  $A_i(x)$\,s and $r(x)$ such that
$P_i(x)=A_i(r(x))r'(x)$ for all $i=0,1,2$,
i.e., case~{\bf(C2)} holds.
}
\end{rmk}

Combining Theorem~\ref{thm:AIC}, concerning monodromy at infinity,
with Theorem~\ref{Th:CS08}, dealing with local center,
we obtain the following complete characterization of system~\eqref{equ:cherkas}
having a global center.
For convenience, we assume that in case~{\bf(C2)}
\begin{align*}
A_0(x):=\alpha_0+\cdots+\alpha_{\kappa} x^{\kappa},~
A_1(x):=\beta_0+\cdots+\beta_{\kappa} x^{\kappa},~
A_2(x):=\gamma_0+\cdots+\gamma_{\kappa} x^{\kappa}
\end{align*}
for an integer ${\kappa}\ge 1$,
where $\alpha_{\kappa}^2+\beta_{\kappa}^2+\gamma_{\kappa}^2\ne 0$.
Moreover, define vector fields
\begin{align*}
{\cal U}^{(0)}
:={\cal F}_1^{(0)}(u,v)\frac{\partial}{\partial u}
+{\cal G}_1^{(0)}(u,v)\frac{\partial}{\partial v},
~~~
{\cal V}^{(0)}
:={\cal F}_2^{(0)}(u,y)\frac{\partial}{\partial u}
+{\cal G}_2^{(0)}(u,y)\frac{\partial}{\partial y},
\end{align*}
where
{\small
\begin{align*}
{\cal F}_1^{(0)}(u,v)&:=u^{{\kappa}+2}, &{\cal G}_1^{(0)}(u,v)&:=\widetilde{A}_0(u)v^3+\widetilde{A}_1(u)v^2+\widetilde{A}_2(u)v,
\\
{\cal F}_2^{(0)}(u,y)&:=-u^{{\kappa}+2}(y+\tilde{y}_*),
&{\cal G}_2^{(0)}(u,y)&:=\widetilde{A}_0(u)+\widetilde{A}_1(u)(y+\tilde{y}_*)
+\widetilde{A}_2(u)(y+\tilde{y}_*)^2,
\end{align*}
}$\widetilde{A}_i(u):=u^{\kappa} A_i(1/u)$,
and $\tilde{y}_*:=\frac{-\beta_{\kappa}}{2\gamma_{\kappa}}$ with nonzero $\gamma_{\kappa}$.

\begin{thm}
Cherkas system~\eqref{equ:cherkas} satisfying either {\bf(W1)} or {\bf(W2)}
has a global center at the origin if and only if
$n$ is odd, $xP_0(x)<0$ for all $x\ne 0$,
and one of the following conditions holds:
\begin{description}
  \item[(G1)]
  Condition~{\bf(C1)} holds,
$a_{\ell_0}< 0$, $c_{\ell_2}< 0$, $\ell_0$ and $\ell_2$ are odd,
where $\ell_i:=\deg P_i$.

  \item[(G2)]
  Condition~{\bf(C2)} holds and either
  \begin{description}
  \item[(G2i)]
  $\gamma_{\kappa}=\beta_{\kappa}=0$, $\alpha_{\kappa}<0$,
  and the vector field ${\cal U}^{(0)}$ has no formal invariant curves of the form
  $v=\Phi(u)\in\mathbb{R}[\![u^{\frac{1}{2}}]\!]\backslash\{0\}$ such that $\Phi(0)=0$, or

\item[(G2ii)]
  $\gamma_{\kappa}<0$ and $\beta_{\kappa}^2-4\alpha_{\kappa} \gamma_{\kappa}<0$, or

\item[(G2iii)]
  $\gamma_{\kappa}<0$, $\beta_{\kappa}^2-4\alpha_{\kappa} \gamma_{\kappa}=0$,
  and the vector field ${\cal V}^{(0)}$ has no formal invariant curves of the form $v=\Psi(u)\in\mathbb{R}[\![u^{\frac{1}{2}}]\!]$ such that $\Psi(0)=0$.
\end{description}

\item[(G3)]
  Condition~{\bf(C3)} holds, $a_{\ell_0}<0$, $c_{\ell_2}<0$,
$b_{\ell_1}^2-4a_{\ell_0}c_{\ell_2}<0$ and $\ell_0+\ell_2=2\ell_1$, where $\ell_i:=\deg P_i$.
\end{description}
\label{TH:M1}
\end{thm}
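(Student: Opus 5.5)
We will use the criterion quoted at the start of Section~2 (\cite[Proposition~2]{LV21}): the origin is a global center if and only if it is the only equilibrium, it is a center, and the system is monodromic at infinity. For \eqref{equ:cherkas} the equilibria are the points $(x,0)$ with $P_0(x)=0$. So uniqueness of the equilibrium, together with monodromy at the origin under {\bf(W1)} or {\bf(W2)} (where $a_{\iota_0}<0$ and $\iota_0$ is odd), is equivalent to $xP_0(x)<0$ for $x\neq0$. By Theorem~\ref{thm:AIC}, monodromy at infinity forces $n$ odd. It therefore remains to combine Theorem~\ref{Th:CS08} (local center) with Theorem~\ref{thm:AIC} (monodromy at infinity) separately in each of the cases {\bf(C1)}, {\bf(C2)}, {\bf(C3)}.

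\textbf{Case (C1).} Here $P_1=0$, so $b_n=0$, and Theorem~\ref{thm:AIC} leaves two alternatives.
\begin{enumerate}
\item If $c_n<0$, then {\bf(F2)} gives exactly $a_nc_n>0$.
\item If $c_n=0$, we are in {\bf(F1)}. The symmetry $v\mapsto -v$ of ${\cal X}^{(0)}$ makes it natural to integrate explicitly. The equation $y\,dy/dx=P_0+P_2y^2$ is linear in $y^2$, which gives the first integral
\[
H=e^{-2\int P_2}y^2-2\int e^{-2\int P_2}P_0.
\]
\end{enumerate}
From $H$ we read off directly when all level curves are closed and bounded. This happens iff $\int^{\pm\infty}e^{-2\int P_2}=\infty$ on the relevant sides and $-\int e^{-2\int P_2}P_0\to+\infty$. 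That condition amounts to $c_{\ell_2}<0$ with $\ell_2$ odd, together with $a_{\ell_0}<0$ and $\ell_0$ odd; when $P_2$ has degree $\ell_2<n$ one uses $\ell_0=n$ or the analogous degree bookkeeping. This explicit route proves {\bf(G1)} more cleanly than tracing the Newton polygons.

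\textbf{Case (C2).} Here $P_i=A_i(r)r'$. The substitution $X=r(x)$ maps the system (after the time change $dt\to r'dt$, with $Y=y$) to the Cherkas system $\dot X=Y$, $\dot Y=A_0(X)+A_1(X)Y+A_2(X)Y^2$. Since $r$ has even degree and its only critical point is $0$, $r$ is a proper map onto a half-line, and the two branches $x>0$ and $x<0$ fold onto $X>0$. Orbits of the original system near infinity correspond to orbits of the reduced system near $X=+\infty$. We will therefore reproduce the proof of Theorem~\ref{thm:AIC} in the chart $u=1/X$, $u>0$ only. This explains why {\bf(G2i)}--{\bf(G2iii)} use $\mathbb{R}[\![u^{1/2}]\!]$ rather than $|u|^{1/2}$, and why no parity condition on $\kappa$ appears. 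The same case analysis with $A_i,\kappa,\alpha,\beta,\gamma$ replacing $P_i,n,a,b,c$ yields {\bf(G2)}. We also need to check that monodromy at infinity of the original system is equivalent to that of the reduced system restricted to $X\to+\infty$, which holds because $r$ is a local diffeomorphism for large $|x|$.

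\textbf{Case (C3).} We use the identity \eqref{PPPPPPP}. Comparing leading coefficients gives $\ell_0+\ell_2=2\ell_1$, or else $P_1$ is degenerate relative to the others. The degenerate situation should force {\bf(C1)}-type behaviour or contradict $P_1\ne0$. With $\ell_0+\ell_2=2\ell_1$ we will verify monodromy at infinity in the intermediate chart through the Newton polygon edge joining the degrees. We expect the main obstacle here: the condition is a discriminant condition on $(a_{\ell_0},b_{\ell_1},c_{\ell_2})$ rather than on $(a_n,b_n,c_n)$. We plan to handle it by exhibiting the Darboux first integral, whose factors are the roots of $c_{\ell_2}t^2+b_{\ell_1}t+a_{\ell_0}$ evaluated along $y\sim t\,x^{(\ell_0-\ell_2)/2}$. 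Real roots give invariant curves escaping to infinity, which destroys monodromy, while complex roots make the level sets bounded. The remaining work in this case is to show that the tangential (F1)/(F3) degenerate cases cannot occur under \eqref{PPPPPPP}.
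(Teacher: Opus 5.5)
Your proposal has the same skeleton as the paper: the global-center criterion, Theorem~\ref{Th:CS08} for the local center, and Lemma~\ref{lm:AIC}/Theorem~\ref{thm:AIC} at infinity, split over \textbf{(C1)}--\textbf{(C3)}. Your \textbf{(C2)} argument is exactly the paper's folding argument. The genuine gap is in the necessity part of \textbf{(C3)}, at the discriminant-zero case, and the way you propose to close it would fail.

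First, the degree count in \eqref{PPPPPPP} has no degenerate alternative. Assume $P_1\not\equiv0$ (otherwise you are in \textbf{(C1)}) and $P_2\not\equiv0$. Then $P_2P_0P_1$ has degree $\ell_0+\ell_1+\ell_2>\deg(P_0P_1'-P_1P_0')$. Hence $e\neq0$ and $\ell_0+\ell_2=2\ell_1$, and comparing leading coefficients gives $a_{\ell_0}c_{\ell_2}=e\,b_{\ell_1}^2$, i.e. $b_{\ell_1}^2-4a_{\ell_0}c_{\ell_2}=b_{\ell_1}^2(1-4e)$.

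So the double-root (``tangential'') case is precisely $e=1/4$, which is fully compatible with \eqref{PPPPPPP}. You cannot show it ``cannot occur under \eqref{PPPPPPP}''; you must show it does not give a global center. The missing ingredient is an explicit real invariant curve. With $R:=P_0/P_1$, \eqref{PPPPPPP} gives $P_2R-R'=eP_1$, hence
\[
\frac{d}{dt}\bigl(y-\lambda R(x)\bigr)\Big|_{y=\lambda R(x)}=\bigl(e\lambda^2+\lambda+1\bigr)P_0(x).
\]
So $y=\lambda P_0/P_1$ is invariant iff $e\lambda^2+\lambda+1=0$; substituting $\lambda=t\,b_{\ell_1}/a_{\ell_0}$, this is your equation $c_{\ell_2}t^2+b_{\ell_1}t+a_{\ell_0}=0$.

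For $e=1/4$ the double root $\lambda=-2$ gives the real curve $y+2P_0/P_1=0$. On it $\dot x=y\neq0$ for large $|x|$, so it carries an unbounded orbit. This is exactly the curve the paper uses to exclude $e=1/4$. For $e\neq1/4$, the paper uses the nonzero discriminant to rule out the double-root branch of \textbf{(M1)}/\textbf{(M3)} at the first blow-up. The single edge polynomial $a_{\ell_0}v^2+b_{\ell_1}v+c_{\ell_2}$ (resp. $c_{\ell_2}y^2+b_{\ell_1}y+a_{\ell_0}$) must then have no real roots.

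Your own invariant-curve idea, made precise as above, handles every case $1-4e\ge0$ uniformly: any real root $\lambda$ gives an escaping invariant curve. Together with $a_{\ell_0}<0$, which is forced by $xP_0<0$, this yields \textbf{(G3)} with no Newton-polygon bookkeeping.

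Elsewhere the proposal is sound. In \textbf{(C1)}, your explicit integral $H=e^{-2\wp}y^2-2\int_0^x e^{-2\wp}P_0$, with $\wp=\int_0^xP_2$, is a genuinely different and shorter route than the paper's. The paper splits into $\ell_0=n>\ell_2$, $\ell_0=\ell_2=n$ and $\ell_0<\ell_2=n$, and verifies \textbf{(M1)}, \textbf{(M2)}, \textbf{(M3)} through a single Newton-polygon edge. In your route, the potential part of $H$ is positive and monotone on each side of $0$. So every level set is an oval iff $\int_0^{\pm\infty}e^{-2\wp}|P_0|=\infty$, i.e. iff $c_{\ell_2}<0$ and $\ell_2$ is odd. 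The conditions $a_{\ell_0}<0$ and $\ell_0$ odd are already forced by $xP_0<0$, and your extra condition $\int^{\pm\infty}e^{-2\wp}=\infty$ is redundant.

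What each approach buys: yours bypasses desingularization entirely. The paper's keeps all three cases inside one machinery, which it reuses in the proof of Theorem~\ref{th:iso}.

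Your preliminary split ``if $c_n<0$ then \textbf{(F2)}'' is incomplete. With $b_n=0$ and $c_n<0$ you can also be in \textbf{(F3)} when $a_n=0$, i.e. $\ell_0<\ell_2=n$. The first integral makes that split unnecessary anyway.
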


{\bf Proof.}
We first consider the sufficiency.
Since $xP_0(x)<0$ for all $x\ne 0$,
the origin $(0,0)$ is the only equilibrium of system~\eqref{equ:cherkas},
which is of center-focus type as either {\bf (W1)} or {\bf(W2)} holds.
We further show that the only equilibrium $(0,0)$ is a global center
if one of conditions {\bf(G1)}-{\bf(G3)} holds.

If {\bf(G1)} holds,
then the only equilibrium $(0,0)$ is a local center
by Theorem~\ref{Th:CS08} since {\bf(C1)} holds.
To show it is a global center,
there are three cases:
\begin{center}
{\bf(L1)} $\ell_0=n$ and $\ell_2<n$,~~~
{\bf(L2)} $\ell_0=\ell_2=n$,~~~
{\bf(L3)} $\ell_0<n$ and $\ell_2=n$.
\end{center}
In case {\bf(L1)},
the Newton polygon ${\cal N}({\cal X}^{(0)})$ has exactly one edge $E^{(0)}$,
which links vertices $(0,2)$ and $(n-\ell_2,0)$.
Thus $h(E^{(0)})=2$ and $w(E^{(0)})=n-\ell_2$,
which is even since $n$ and $\ell_2$ are both odd.
Moreover,
the polynomial
\begin{align}
\mathcal{P}_{E^{(0)}}(v)=a_nv^3+c_{\ell_2}v
\label{PEG1M1}
\end{align}
has no nonzero real roots since $a_n< 0$ and $c_{\ell_2}< 0$.
It follows that condition {\bf(M1)} of Lemma~\ref{lm:AIC} holds.
In case {\bf(L2)},
since $P_1=0$ and both $a_n$ and $c_n$ are negative,
we have $b_n^2-4a_nc_n=-4a_nc_n<0$,
i.e., condition {\bf(M2)} of Lemma~\ref{lm:AIC} holds.
In case {\bf(L3)},
we have $c_n<0$ and $b_n^2-4a_nc_n=0$.
The Newton polygon ${\cal N}({\cal Y}^{(0)})$ has exactly one edge $E^{(0)}$,
linking vertices $(0,1)$ and $(n-\ell_0,-1)$.
Thus $h(E^{(0)})=2$ and $w(E^{(0)})=n-\ell_0$,
which is even since $n$ and $\ell_0$ are both odd.
Moreover,
the polynomial
\begin{align}
\mathcal{P}_{E^{(0)}}(y)=c_ny^2+a_{\ell_0}
\label{PEG1M3}
\end{align}
has no nonzero real roots since $a_{\ell_0}< 0$ and $c_n< 0$.
Thus condition {\bf(M3)} of Lemma~\ref{lm:AIC} holds.
Consequently,
system~\eqref{equ:cherkas} is monodromic at infinity and therefore
the center equilibrium $(0,0)$ is actually a global center.

If condition {\bf(G2)} holds,
then the only equilibrium $(0,0)$ is a local center by Theorem~\ref{Th:CS08}
since condition {\bf(C2)} holds.
Consider the map $\pi:\mathbb{R}^2 \to \mathbb{R}_+\times\mathbb{R}$
defined by $(x,y)\mapsto (\tilde{x},\tilde{y})=(r(x),y)$.
Then taking the pullback of the 1-form
$\omega:=\tilde{y}d\tilde{y}
+(A_0(\tilde{x})+A_1(\tilde{x})\tilde{y}+A_2(\tilde{x})\tilde{y}^2)d\tilde{x}$
associated to the system
\begin{align}
\dot {\tilde{x}}=\tilde{y},~~~
\dot y=A_0(\tilde{x})+A_1(\tilde{x})\tilde{y}+A_2(\tilde{x})\tilde{y}^2
\label{AAA}
\end{align}
along the map $\pi$,
we obtain the 1-form
$$
\pi_*\omega
=ydy+(A_0(r(x))+A_1(r(x))y+A_2(r(x))y^2)r'(x)dx
$$
associated to system~\eqref{equ:cherkas}.
The map $\pi$ discards the phase portrait of system~\eqref{AAA}
in the half-plane $\tilde{x}<0$ and creates the symmetric system~\eqref{equ:cherkas}
by ``folding'' phase portrait of system~\eqref{AAA}
in the half-plane $\tilde{x}>0$ onto the half-plane $\tilde{x}<0$
while smoothing the curves on the $\tilde{y}$-axis.
Thus the local center equilibrium $(0,0)$ of system~\eqref{equ:cherkas} is a global one
if and only if system~\eqref{AAA} has no orbits approaching to infinity
in the half-plane $\tilde{x}>0$.
Note that system~\eqref{AAA} has the same form as \eqref{equ:cherkas}.
Then using the same proof of Lemma~\ref{lm:AIC},
we can obtain that
if one of conditions {\bf(G2i)}-{\bf(G2iii)} holds,
then system~\eqref{AAA} has no orbits approaching to infinity in the half-plane $\tilde{x}>0$
and therefore system~\eqref{equ:cherkas} has a global center at the origin.

If condition {\bf(G3)} holds,
then the only equilibrium $(0,0)$ is a local center by Theorem~\ref{Th:CS08}
since condition {\bf(C3)} holds.
Note that $\ell_0+\ell_2=2\ell_1$, $\ell_i\le n$ for all $i=0,1,2$
and at least one of $\ell_i$ equals to $n$.
Then there are three cases:
\begin{center}
{\bf (L1)$'$} $\ell_0=n$ and $\ell_1, \ell_2<n$,~
{\bf (L2)$'$} $\ell_0=\ell_1=\ell_2=n$,~
{\bf (L3)$'$} $\ell_0, \ell_1<n$ and  $\ell_2=n$.
\end{center}
In case {\bf(L1)$'$},
we have $a_n<0$ and $b_n=c_n=0$, as required in {\bf(M1)}.
Moreover,
the equality $\ell_0+\ell_2=2\ell_1$ shows that $2(n-\ell_1)=n-\ell_2$.
Then, ${\cal N}({\cal X}^{(0)})$ has exactly one edge $E^{(0)}$,
linking vertices $(0,2)$ and $(2(n-\ell_1),0)$,
and the polynomial
\begin{align*}
\mathcal{P}_{E^{(0)}}(v)=a_nv^2+b_{\ell_1}v+c_{\ell_2},
\end{align*}
which has no real roots since $b_{\ell_1}^2-4a_{\ell_0}c_{\ell_2}<0$.
Thus, condition {\bf(M1)} of Lemma~\ref{lm:AIC} holds.
In case {\bf(L2)$'$}, condition {\bf(M2)} holds clearly.
In case {\bf(L3)$'$},
we have $a_n=b_n=0$ and $c_n<0$,
which ensures that $b_n^2-4a_nc_n=0$ required in {\bf(M3)}.
Moreover,
we see from the equality $\ell_0+\ell_2=2\ell_1$ that $2(n-\ell_1)=n-\ell_0$.
Then ${\cal N}({\cal Y}^{(0)})$ has exactly one edge $E^{(0)}$,
linking vertices $(0,1)$ and $(2(n-\ell_1),-1)$,
and the polynomial
\begin{align*}
\mathcal{P}_{E^{(0)}}(y)=c_ny^2+b_{\ell_1}y+a_{\ell_0}
\end{align*}
has no real roots since $b_{\ell_1}^2-4a_{\ell_0}c_{\ell_2}<0$.
Thus, condition {\bf(M3)} holds.
In each case,
system~\eqref{equ:cherkas} is monodromic at infinity
and therefore the center equilibrium $(0,0)$ of system~\eqref{equ:cherkas} is a global center.
Consequently,
we obtain the sufficiency.

Next, we consider the necessity.
Since the global center equilibrium $(0,0)$ is the only equilibrium,
we obtain that $xP_0(x)<0$ for all $x\ne 0$.
By Lemma~\ref{lm:AIC} and Theorem~\ref{Th:CS08},
$n$ is odd, one of conditions {\bf(M1)}-{\bf(M3)} holds,
and one of conditions {\bf(C1)}-{\bf(C3)} holds.

In the case that {\bf(C1)} holds,
if {\bf(M1)} is valid,
then $\ell_0=n$ and $a_{\ell_0}<0$.
Note that ${\cal N}({\cal X}^{(0)})$ has only one edge $E^{(0)}$,
linking vertices $(0,2)$ and $(n-\ell_2,0)$,
and the polynomial corresponding to the edge $E^{(0)}$ is given by \eqref{PEG1M1}.
By {\bf(M1)}, we have $n-\ell_2$ is even and
polynomial \eqref{PEG1M1} has no nonzero real roots.
It follows that $a_{\ell_0}<0$, $c_{\ell_2}<0$,
$\ell_0$ and $\ell_2$ are both odd, i.e., {\bf(G1)} holds.
If {\bf(M2)} is valid,
then $\ell_2=n$,
which is odd, $c_n<0$ and $b_n^2-4a_nc_n=-4a_nc_n<0$.
Clearly, {\bf(G1)} holds.
If {\bf(M3)} is valid,
then $\ell_2=n$, which is odd, and $c_{\ell_2}<0$.
Note that ${\cal N}({\cal Y}^{(0)})$ has only one edge $E^{(0)}$,
linking vertices $(0,1)$ with $(n-\ell_0,-1)$,
and the polynomial corresponding to edge $E^{(0)}$ is given by \eqref{PEG1M3}.
By {\bf(M3)},
we have $n-\ell_0$ is even and polynomial \eqref{PEG1M3} has no real roots.
It also implies that {\bf(G1)} holds.
Consequently,
we obtain the necessary condition {\bf(G1)} for global center.

In the case that {\bf(C2)} holds,
since one of conditions {\bf(M1)}-{\bf(M3)} holds,
system~\eqref{equ:cherkas} has no orbits approaching to infinity by Lemma~\ref{lm:AIC}.
As indicated below \eqref{AAA},
this is equivalent that
system~\eqref{AAA} has no orbits approaching to the infinity in the half-plane $\tilde{x}>0$.
Since system~\eqref{AAA} has the same form as \eqref{equ:cherkas},
using the same proof of Lemma~\ref{lm:AIC},
we can obtain that one of the conditions {\bf(G2i)}-{\bf(G2iii)} holds.
Hence, we obtain the necessary condition {\bf(G2)} for global center.

In the case that {\bf(C3)} holds,
equality~\eqref{PPPPPPP} is satisfied for a nonzero constant $e$.
We see from degrees of both sides of \eqref{PPPPPPP} that
$\ell_0+\ell_1+\ell_2=3\ell_1$, i.e.,
\begin{align}
\ell_0+\ell_2=2\ell_1.
\label{l0l1l2}
\end{align}
In the case $e=1/4$,
system~\eqref{equ:cherkas} has an unbounded invariant algebraic curve $y+2P_0(x)/P_1(x)=0$,
as shown just below equality (3.17) in \cite{CS08},
where we note that $P_0/P_1$ is a polynomial guaranteed by (3.15) in \cite{CS08}.
The existence of such an unbounded invariant algebraic curve
implies that system~\eqref{equ:cherkas} has no global center
when $e=1/4$.
In the opposite case $e\ne 1/4$,
we see from the coefficients of leading terms of both sides of \eqref{PPPPPPP} that
$c_{\ell_2}a_{\ell_0}=e b_{\ell_1}^2$ and therefore
\begin{align}
b_{\ell_1}^2-4a_{\ell_0}c_{\ell_2}\ne 0.
\label{c3DELne0}
\end{align}
If {\bf(M1)} holds,
then $\ell_0=n$ and therefore $2(n-\ell_1)=n-\ell_2$ by \eqref{l0l1l2}.
It follows that
${\cal N}({\cal X}^{(0)})$ has only one edge $E^{(0)}$,
linking vertices $(0,2)$ and $(n-\ell_2,0)$,
and
$$
\mathcal{P}_{E^{(0)}}(v)=a_{\ell_0}v^2+b_{\ell_1}v+c_{\ell_2}.
$$
Then \eqref{c3DELne0} and {\bf(M1)} ensure that
\begin{align}
a_{\ell_0}<0,~~~c_{\ell_2}<0,~~~b_{\ell_1}^2-4a_{\ell_0}c_{\ell_2}< 0.
\label{c3DEL<0}
\end{align}
If {\bf(M2)} holds,
then $c_n<0$ and $b_n^2-4a_nc_n<0$,
which implies that $a_n<0$.
Thus $\ell_0=\ell_2=n$ and therefore $\ell_1=n$ because of \eqref{l0l1l2}.
Hence \eqref{c3DEL<0} still holds.
If {\bf(M3)} holds,
then $\ell_2=n$ and therefore $2(n-\ell_1)=n-\ell_0$ by \eqref{l0l1l2}.
It follows that
${\cal N}({\cal Y}^{(0)})$ has only one edge $E^{(0)}$,
linking vertices $(0,1)$ and $(n-\ell_0,-1)$, and
$$
\mathcal{P}_{E^{(0)}}(y)=c_ny^2+b_{\ell_1}y+a_{\ell_0}.
$$
Then we see from \eqref{c3DELne0} and {\bf(M3)} that \eqref{c3DEL<0} holds.
Consequently,
we obtain from \eqref{l0l1l2} and \eqref{c3DEL<0}
the necessary condition {\bf(G3)} for global center,
and the proof of Theorem~\ref{TH:M1} is completed.
\qquad$\Box$

\section{Non-isochronicity of the global center}
\setcounter{equation}{0}
\setcounter{lm}{0}
\setcounter{thm}{0}
\setcounter{rmk}{0}
\setcounter{df}{0}
\setcounter{cor}{0}
\setcounter{pro}{0}

In the last section
we obtain global center conditions for Cherkas system~\eqref{equ:cherkas}.
This section is devoted to further show the non-isochronicity
by investigating the period function of orbits near infinity.
For this purpose,
we consider a closed rectangle as a compactification of the real plane,
as seen in Fig.~\ref{fig:TC}(a).
As analyzed in section 2,
the boundary of the rectangle is a polycycle
consisting of a finite union of equilibria and integral curves
when system~\eqref{equ:cherkas} has a global center at the origin.
After desingularization,
the boundary of the rectangle becomes a polycycle with
only hyperbolic saddles and semi-hyperbolic saddles on it.
In order to study the period of orbit near infinity,
we need to consider the passage time near each saddle corner of the polycycle
and that along each side of integral curve between two adjacent saddle corners.
Some notations and results given in \cite{LSZ} will be used in the following.

Along a side of integral curve of the polycycle,
by the Flow Box Theorem (\cite[Theorem~1.12]{DLA}) and a time-reversing if necessary,
there are analytic coordinates $(x,y)$ such that
the side lies on the $x$-axis
and the vector field along this side takes the form
\begin{align}
\frac{1}{y^qg_1(x,y)}\frac{\partial }{\partial x},
\label{vf:seg}
\end{align}
where $q$ is an integer and $g_1$ is an analytic function
satisfying that $g_1(x,0)> 0$ for all $x\in I$, a compact interval.
Near a hyperbolic saddle corner of the polycycle,
by the analytic normal form result given in \cite[Theorem~2.15]{DLA}
and a time-reversing if necessary,
there exist analytic coordinates $(x,y)$ under which
the vector field near this saddle corner takes the form
\begin{align}
\frac{1}{x^p y^q g_2(x,y)}
\left\{
x\frac{\partial }{\partial x}-y(\lambda+o(1))\frac{\partial }{\partial y}
\right\},
\label{vf:h-saddle}
\end{align}
where $p,q$ are integers, $\lambda>0$ and
$g_2$ is an analytic function such that $g_2(0,0)> 0$.
Near a semi-hyperbolic saddle corner of the polycycle,
by the $C^\infty$ normal formal result given in \cite[Theorem~2.19]{DLA}
and a time-reversing if necessary,
there exist $C^\infty$ coordinates $(x,y)$ under which
the vector field near this semi-hyperbolic saddle corner is of the form
\begin{align}
\frac{1}{x^py^qg_3(x,y)}
\left\{
(x^k+ax^{2k-1})\frac{\partial}{\partial x}-\lambda y\frac{\partial}{\partial y}
\right\},
\label{vf:sh-saddle}
\end{align}
where integer $k\ge 2$, $a\in\mathbb{R}$, $\lambda>0$ and
$g_3$ is a $C^\infty$ function with $g_3(0,0)> 0$.
Note that
factors of the form $1/(x^py^q)$ in \eqref{vf:seg}-\eqref{vf:sh-saddle} are unavoidable
because those factors appear naturally
when we consider polynomial vector fields in local coordinates near the infinity
and a time-rescaling cannot be performed to cancel those factors.
When concerning about the period,
we have to use conjugate relation rather than equivalence relation.

\begin{figure}[h]
    \centering
    \subcaptionbox{%
     }{\includegraphics[height=1.5in]{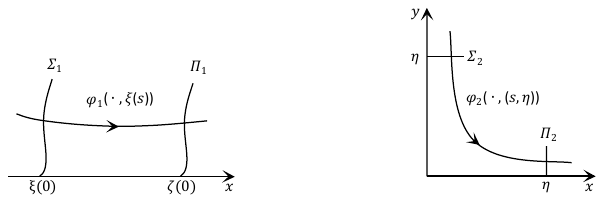}}~~~~~~~~~~~~~~~~~~
     \subcaptionbox{%
     }{\includegraphics[height=1.5in]{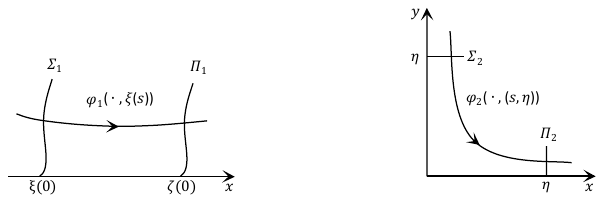}}
   \caption{Transverse sections for vector fields \eqref{vf:seg} and \eqref{vf:h-saddle}.}
    \label{fig:ts}
\end{figure}

For vector field \eqref{vf:seg},
let $\xi,\zeta:[0,1]\to \mathbb{R}^2$ be two $C^\infty$ curves
transverse to the $x$-axis such that $\xi(0)$ and $\zeta(0)$ are two points
lying in the interval $I$,
as shown in Fig.~\ref{fig:ts}(a).
Let $\Sigma_1$ and $\Pi_1$ denote the images of $\xi$ and $\zeta$, respectively.
Suppose that $\varphi_1(t,\xi(s))$ is the orbit
passing through the point $\xi(s)\in\Sigma_1$ for small $s>0$.
Let $t_1(s)$ be the passage time along the side between $\Sigma_1$ and $\Pi_1$, i.e.,
$$
t_1(s):=\min\{t\in\mathbb{R}_+:\varphi_1(t,\xi(s))\in \Pi_1\}.
$$
For vector field \eqref{vf:h-saddle},
consider transverse sections
\begin{align*}
\Sigma_2:=\{(x,y)\in\mathbb{R}^2:y=\eta\}~~~\mbox{and}~~~
\Pi_2:=\{(x,y)\in\mathbb{R}^2:x=\eta\}
\end{align*}
with small $\eta>0$, as shown in Fig.~\ref{fig:ts}(b).
Suppose that $\varphi_2(t,(s,\eta))$ is the orbit
passing through the point $(s,\eta)\in \Sigma_2$ for small $s>0$.
Let $t_2(s)$ be the passage time at the hyperbolic saddle corner, i.e.,
$$
t_2(s):=\min\{t\in\mathbb{R}_+:\varphi_2(t,(s,\eta))\in \Pi_2\}.
$$
For vector field \eqref{vf:sh-saddle},
we can define similarly the passage time $t_3(s)$ at the semi-hyperbolic saddle corner.
The following lemma (\cite[Lemma~2.4]{LSZ})
characterizes orders of the above passage times as $s$ approaches to $0^+$.

\begin{lm}
As $s\to 0^+$,
passage times $t_1(s)$, $t_2(s)$ and $t_3(s)$ satisfy that
$$
t_1(s)=O(s^q),~~~t_2(s)=O(s^{\rho}|\ln s|^\alpha),
$$
where $\rho:=\min\{p,\lambda q\}$ and
$\alpha$ equals to either $1$ if $p-\lambda q=0$, or $0$ otherwise, and
\begin{equation*}
t_3(s)=
\left\{
\begin{array}{lllll}
O(s^p),                                &q>0,
\\
O(1),                                  &q=0,~p\ge k,
\\
O(s^{p-k+1}|\ln s|^\beta),             &q=0,~p\le k-1,
\\
O(s^{-a\lambda q}\exp(\frac{\lambda q}{1-k}s^{1-k})), &q<0,
\end{array}
\right.
\end{equation*}
where $\beta$ equals to either $1$ if $p=k-1$, or $0$ otherwise.
\label{lm:time}
\end{lm}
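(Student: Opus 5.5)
The three passage times can each be estimated by writing the time as an integral of the reciprocal of one velocity component along an explicit orbit of the normalized field. In each normal form \eqref{vf:seg}--\eqref{vf:sh-saddle} the orbits of the bracketed field are known in closed form or to leading order, and the prefactor $1/(x^py^qg_i)$ only reparametrizes time. Since $g_i$ is bounded and bounded away from zero near the relevant compact set, we may replace $g_i$ by $1$ at the cost of multiplicative constants. This does not affect any $O(\cdot)$ statement, so we argue with $g_i\equiv 1$.

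\textbf{Side and hyperbolic corner.} For $t_1$: along \eqref{vf:seg} we have $\dot y=0$, so the orbit through $\xi(s)$ stays on $y=y(s)$ with $y(s)\asymp s$ by transversality. Moreover $\dot x=1/(y^q g_1)$, hence $t_1(s)=\int_I y^q g_1\,dx=O(s^q)$. For $t_2$: we first use the analytic linearization (or simply the fact that $\lambda+o(1)$ can be absorbed) to take the orbit through $(s,\eta)$ as $y=\eta(x/s)^{-\lambda}$, up to a factor $1+o(1)$ in the exponent, which we control by taking $\eta$ small. From $\dot x=x^{1-p}y^{-q}$ we get
\begin{align*}
t_2(s)\asymp\int_s^\eta x^{p-1}y^q\,dx=\eta^q s^{\lambda q}\int_s^\eta x^{p-\lambda q-1}\,dx .
\end{align*}
Evaluating gives the three regimes: $s^{\lambda q}$ if $p>\lambda q$, $s^{\lambda q}|\ln s|$ if $p=\lambda q$, and $s^{p}$ if $p<\lambda q$. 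This is exactly $O(s^\rho|\ln s|^\alpha)$. The $o(1)$ perturbation of $\lambda$ needs care only in the log case; there we use the analytic normal form, in which the saddle is orbitally linearizable or the resonant term is absorbed, so that the exponent is exact.

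\textbf{Semi-hyperbolic corner.} This is the main obstacle. The orbit is $y=\eta\exp\bigl(-\lambda\int_s^x d\tau/(\tau^k+a\tau^{2k-1})\bigr)$. Since $\int d\tau/(\tau^k+a\tau^{2k-1})=\frac{\tau^{1-k}}{1-k}-a\ln\tau+O(1)$, we obtain
\begin{align*}
y\asymp \eta\, (x/s)^{a\lambda}\exp\Bigl(\tfrac{\lambda}{1-k}(s^{1-k}-x^{1-k})\Bigr),
\end{align*}
which is flat in $s$ for $x$ of order one. The time is
\begin{align*}
t_3\asymp\int_s^\eta x^{p-k}y^q\,dx .
\end{align*}
We now split by the sign of $q$.
\begin{itemize}
\item If $q>0$, then $y^q$ is exponentially small except near $x=s$. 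Substituting $x=s+s^k w$ localizes the integral, giving $s^{p-k}\cdot s^k=s^p$.
\item If $q=0$, the integral is $\int_s^\eta x^{p-k}dx$. This is $O(1)$ for $p\ge k$, and equals $s^{p-k+1}$ times a $|\ln s|$ factor exactly when $p=k-1$.
\item If $q<0$, then $y^q$ is largest at $x=\eta$. The integral is dominated by its value there, $s^{-a\lambda q}\exp(\frac{\lambda q}{1-k}s^{1-k})$ times bounded factors.
\end{itemize}
The delicate point is making the Laplace-type localization rigorous for $q>0$ and checking that $g_3$ being only $C^\infty$, rather than analytic, does no harm. It does not, because only its positive bounds are used.
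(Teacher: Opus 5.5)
The paper does not prove this lemma; it quotes it from \cite[Lemma~2.4]{LSZ}. Your route is the natural one: write each passage time as $\int dx/\dot x$ along an explicitly integrated orbit. The side estimate and the semi-hyperbolic estimates are fine, because there the normal form is exact in $y$ and the orbit is known up to a bounded factor. For $q<0$, the ``bounded factor'' is $\int_0^\eta x^{p-k+a\lambda q}\exp\bigl(-\tfrac{\lambda q}{1-k}x^{1-k}\bigr)\,dx<\infty$, which is why no extra $s^{p-k+1}$ or $|\ln s|$ appears; that is worth saying explicitly.

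\textbf{The gap is at the hyperbolic corner.} You control the $o(1)$ as ``a factor $1+o(1)$ in the exponent, made small by shrinking $\eta$''. That only gives $\eta(s/x)^{\lambda+\epsilon}\le y(x)\le\eta(s/x)^{\lambda-\epsilon}$, and an error in the exponent changes the power of $s$. For $p>\lambda q$ you then get only $t_2=O(s^{\lambda q-\epsilon|q|})$, not $O(s^{\lambda q})$. The only regime insensitive to this is $p<\lambda q$ (and trivially $q=0$), where the integral concentrates near $x=s$ and $y\approx\eta$. So your claim that the perturbation matters only in the logarithmic case is wrong: the case $\rho=\lambda q<p$ needs the sharp exponent just as much. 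Your fallback, analytic (orbital) linearization, is not available in general. For example, resonant saddles with a non-vanishing saddle quantity are not even orbitally linearizable.

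\textbf{The missing idea} is an error in $\ln y$ that is bounded uniformly in $s$, not merely small in the exponent.
\begin{itemize}
\item Since the coordinates are analytic, the $\partial_y$-component is $-y(\lambda+h)$ with $|h(x,y)|\le C(x+y)$.
\item Along the orbit, $\ln y(x)=\ln\eta-\lambda\ln(x/s)-\int_s^x h(\tau,y(\tau))\,\tau^{-1}d\tau$.
\item Take $\eta$ so small that $|h|\le\lambda/2$. This gives the a priori bound $y(\tau)\le\eta(s/\tau)^{\lambda/2}$.
\item Hence $\int_s^\eta(\tau+y(\tau))\,\tau^{-1}d\tau\le\eta+2\eta/\lambda$, so $y(x)=\eta(s/x)^{\lambda}e^{O(1)}$ uniformly in $s$ and $x\in[s,\eta]$.
\end{itemize}
This yields the two-sided estimate $t_2\asymp\eta^q s^{\lambda q}\int_s^\eta x^{p-\lambda q-1}\,dx$. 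All three regimes, including $p=\lambda q$, follow at once. Alternatively, a $C^1$ linearization would also do: it exists for planar hyperbolic saddles by Hartman's theorem, it maps the axes to the axes, and so it changes $x^py^qg_2$ only by a bounded positive factor.
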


Clearly,
the passage time $t_1(s)$ approaches to either $0$ or $+\infty$ if $q\ne 0$.
It is the same for $t_2(s)$ and $t_3(s)$ if neither $p$ nor $q$ is $0$.
When considering Cherkas system~\eqref{equ:cherkas} at infinity,
we would deal with vector fields of forms \eqref{vf:seg}-\eqref{vf:sh-saddle}
with rational $p$ and $q$.
Note that results in Lemma~\ref{lm:time} still hold
even for real $p$ and $q$,
since the proof of Lemma~\ref{lm:time} is independent of the types of $p$ and $q$.

\begin{thm}
Cherkas system~\eqref{equ:cherkas} has no isochronous global centers.
\label{th:iso}
\end{thm}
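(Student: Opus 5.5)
Suppose, for contradiction, that the origin of Cherkas system~\eqref{equ:cherkas} is an isochronous global center with period $T>0$. Then every periodic orbit has period $T$, and in particular the period $T(h)$ of orbits tending to the boundary polycycle of the rectangular compactification stays equal to $T$. The plan has two stages. First, show that in almost all configurations allowed by Theorem~\ref{TH:M1} (equivalently by Lemma~\ref{lm:AIC}, cases {\bf(M1)}--{\bf(M3)}), the period of orbits near infinity tends to $0$ or to $+\infty$, which is already a contradiction. Second, show that in the remaining configurations the system reduces to an analytic Li\'enard system, and rule out isochronicity there by a separate argument.

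For the first stage, follow the desingularization from the proof of Lemma~\ref{lm:AIC}. The boundary of the rectangle consists of the lines $v=0$ in charts \eqref{PP1} and \eqref{PP2} and the corners \eqref{PP3}. After the quasi-homogeneous blow-ups it becomes a polycycle whose vertices are hyperbolic saddles (such as ${\cal X}_h^{(i)}$) or semi-hyperbolic saddles, and whose sides are regular arcs. At each vertex and along each side, undo the time rescalings ${\rm d}t\to v^{m-2}{\rm d}t$, ${\rm d}t\to u^n{\rm d}t$ and ${\rm d}t\to -u^nv^{\widetilde m}{\rm d}t$, together with the factors $u_i^{\sigma}$ removed in each blow-up. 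This writes the original vector field in the forms \eqref{vf:seg}, \eqref{vf:h-saddle} and \eqref{vf:sh-saddle} with explicit rational exponents $p,q$, and $\lambda$ read off from the Newton polygon edges.

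Apply Lemma~\ref{lm:time} to each piece. The total period is the sum of these passage times plus a bounded interior part. The key bookkeeping is as follows.
\begin{itemize}
\item Along the side $v=0$ of \eqref{PP1} the original field is $v^{-1}\partial_x+\cdots$ in time, so $q=-1$ on this side. Its passage time is $O(s)$, which tends to $0$.
\item The corners and the chart \eqref{PP2} must then be checked. If any piece has $q<0$ at a semi-hyperbolic corner, the time blows up. If all pieces have positive exponents, the period tends to $0$.
\end{itemize}
I expect that in case {\bf(M2)}, and in {\bf(M1)}/{\bf(M3)} with $\rho>0$ at every corner, $T(h)\to0$. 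I also expect $T(h)\to\infty$ whenever some exponent gives the logarithmic or exponential branch. The only survivor should be the borderline situation in which every passage time is $O(1)$ and nonvanishing, i.e. all relevant exponents vanish. Tracking the degrees, I expect this to force $n=1$ in the leading behavior of $P_0,P_2$ relative to $P_1$, with $T$ tending to a nonzero constant.

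For the second stage, in the borderline case use the standard change of variables for Cherkas systems. Setting $\tilde y=y\,e^{-\int P_2}$ and reparametrizing time, or equivalently using the Sabatini transformation $z=\int_0^x e^{-\int_0^s P_2}$, turns \eqref{equ:cherkas} into an analytic Li\'enard system $\dot z=w-F(z)$, $\dot w=-g(z)$. The borderline conditions should force $F$ and $g$ to be odd after the global center and isochronicity constraints; here I would use the symmetry given by {\bf(C1)}/{\bf(C2)} when those hold. Then apply the isochronicity criterion of \cite{CD04} for analytic Li\'enard systems with odd damping and restoring terms. It requires an identity of the form $g(z)=z+\frac{1}{z^3}\bigl(\int_0^z sf(s)\,{\rm d}s\bigr)^2$, which links $g$ to $F$. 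Substituting the explicit exponential and polynomial expressions coming from $P_0,P_1,P_2$ gives an identity involving integrals of $e^{\int P_2}$. Liouville's theorem on elementary integrals then shows that this identity cannot hold unless $P_2\equiv0$, and $P_2\equiv0$ was excluded. I expect the main obstacle to be this Liouville step, together with the exhaustive exponent bookkeeping in the first stage that isolates exactly the borderline case.
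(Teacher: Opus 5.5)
Your two-stage plan matches the paper's architecture: passage-time asymptotics along the desingularized boundary polycycle, then, for the one surviving configuration, the odd-Li\'enard isochronicity criterion combined with Liouville. The step linking the two stages, however, is missing, and as sketched it would not go through. Stage~2 needs $f$ and $g$ to be odd, because the identity $g(z)=z+z^{-3}(\int_0^z sf(s)\,{\rm d}s)^2$ you quote is the criterion only in that case. Stage~1 never identifies which systems survive, and your guess (``force $n=1$'') is not what happens. Bookkeeping over \textbf{(M1)}--\textbf{(M3)} alone cannot produce oddness, since it only sees leading behaviour at infinity; you must use the center classification. The symmetry you invoke does not supply oddness either:
\begin{itemize}
\item \textbf{(C1)} only gives $P_1=0$, with $P_0,P_2$ arbitrary.
\item \textbf{(C2)} gives $P_i=A_i(r)r'$ for a general polynomial $r$, which has no global reflection symmetry.
\item \textbf{(C3)} is not addressed at all.
\end{itemize}

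The paper closes the gap by organizing Stage~1 along Theorem~\ref{TH:M1}. Under \textbf{(G1)} and \textbf{(G3)}, the degree conditions ($\ell_0,\ell_2$ odd, respectively $\ell_0+\ell_2=2\ell_1$) give single-edge Newton polygons with strictly positive exponents, so the period tends to $0$; the same holds in \textbf{(G2i)} and \textbf{(G2ii)}. In \textbf{(G2)} the paper proceeds as follows:
\begin{itemize}
\item An isochronous center is non-degenerate \cite{CD97}, so $\nu=1$.
\item A global center forces $xr'(x)>0$, hence $r(x)=x^2+\cdots+\delta x^{2\varpi}$ with $\delta>0$.
\item Pushing forward by $\tilde x=r(x)$, a conjugacy on each half-plane, yields a Cherkas system multiplied by $r'(r^{-1}(\tilde x))\sim 2\varpi\delta^{1/(2\varpi)}\tilde x^{(2\varpi-1)/(2\varpi)}$.
\item For $\varpi\ge 2$ this factor makes every exponent in the \textbf{(G2iii)} blow-up chain non-integral, so none vanishes and the passage times tend to $0$ or $\infty$.
\end{itemize}
The only possible borderline is therefore $\varpi=1$, i.e.\ $r(x)=x^2$ and $P_i(x)=2xA_i(x^2)$ all odd. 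This is exactly what makes $\chi$, $f$, $g$ odd and lets your Stage~2 run; $\deg P_2\ge1$ then follows automatically from oddness and $P_2\not\equiv0$.

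Two smaller corrections. First, the reduction to Li\'enard form must involve no time reparametrization, since $u=\chi(x)$, $v=ye^{-\wp(x)}$ already conjugates with the same time, and rescaling time would destroy the period information. Second, in the notation of Lemma~\ref{lm:time} the side factor $1/v$ corresponds to $q=1$, not $q=-1$, which is why that passage time is $O(s)$.
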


{\bf Proof.}
In order to prove this theorem,
we investigate the period of orbit of system~\eqref{equ:cherkas} near infinity.
For this purpose,
we consider a closed rectangle as a compactification of the real plane,
as seen in Fig.~\ref{fig:TC}(a).
Under transformation {\bf(T1)}, {\bf(T2)} and {\bf (T3)},
the vector field generated by system~\eqref{equ:cherkas}
near the line $\mathbb{R}\times\{\infty\}$,
the line $\{\infty\}\times\mathbb{R}$ and the point $\{\infty\}\times\{\infty\}$
are given by
$$
\widehat {\cal Z}^{(0)}:=\frac{1}{v} {\cal Z}^{(0)},~~~
\widehat {\cal Y}_*^{(0)}:=\frac{1}{u^n}{\cal Y}_*^{(0)}~~~\mbox{and}~~~
\widehat {\cal X}^{(0)}:=-\frac{1}{u^nv}{\cal X}^{(0)},
$$
respectively,
where ${\cal Z}^{(0)}$, ${\cal Y}_*^{(0)}$ and ${\cal X}^{(0)}$ are vector fields
generated by systems \eqref{PP1}, \eqref{PP2} and \eqref{PP3}, respectively.


\begin{figure}[h]
    \centering
     \subcaptionbox{%
     }{\includegraphics[height=1.5in]{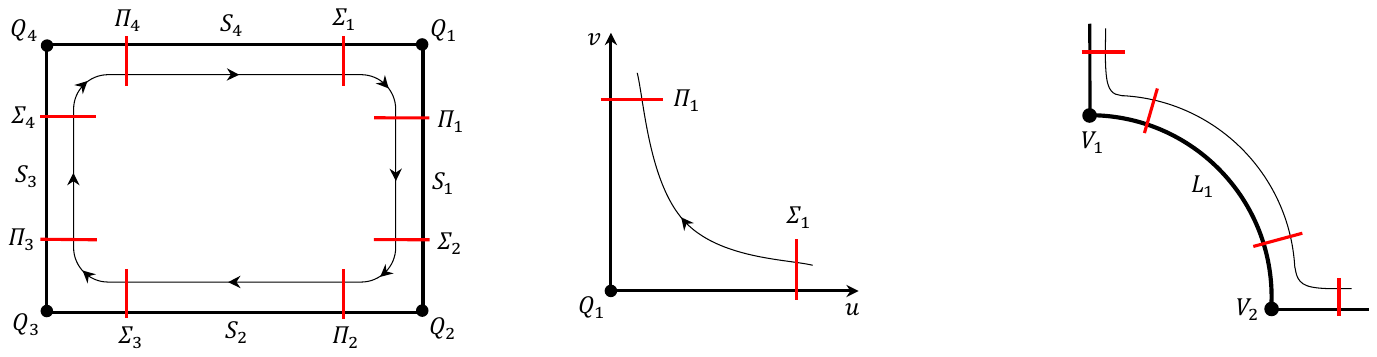}}~~~~~~
     \subcaptionbox{%
     }{\includegraphics[height=1.5in]{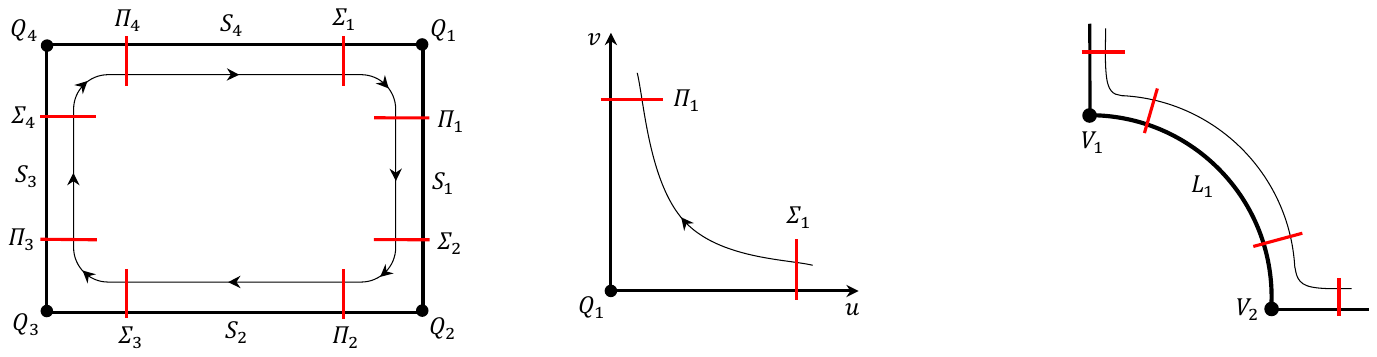}}~~~~~~
     \subcaptionbox{%
     }{\includegraphics[height=1.5in]{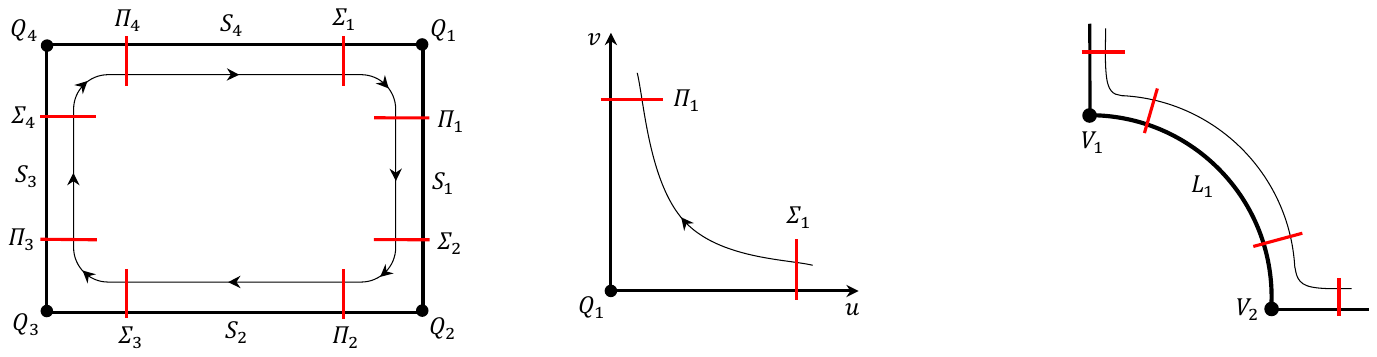}}
    \caption{
    (a) Rectangle compactification of $\mathbb{R}^2$ and transverse sections of the boundary,
    (b) saddle corner at $Q_1$ in the local coordinates,
    (c) desingularization of the degenerate saddle corner.}
    \label{fig:L1}
\end{figure}


By Theorem~\ref{TH:M1},
if system~\eqref{equ:cherkas} has a global center at the origin,
then either {\bf(G1)}, or {\bf(G2)}, or {\bf(G3)} holds.
When {\bf(G1)} holds,
we have $P_1=0$ and there are three cases
\begin{center}
{\bf(L1)} $\ell_0=n$ and $\ell_2<n$,~~~
{\bf(L2)} $\ell_0=\ell_2=n$,~~~
{\bf(L3)} $\ell_0<n$ and $\ell_2=n$.
\end{center}
In the case {\bf(L1)},
as shown in Fig.~\ref{fig:L1}(a),
the four vertices $Q_1,...,Q_4$ of the rectangle,
labeled clockwise from the top right-hand corner,
are all equilibria of system~\eqref{equ:cherkas} at infinity.
For $i=1,...,4$,
let $\xi_i,\zeta_i:[0,1]\to\mathbb{R}^2_\infty$ be analytic curves transverse to
segments $\overline{Q_{i-1}Q_i}$ and $\overline{Q_iQ_{i+1}}$ near the vertex $Q_i$
such that $\xi_i(0)\in\overline{Q_{i-1}Q_i}$ and $\zeta_i(0)\in\overline{Q_iQ_{i+1}}$,
respectively, where $Q_{0}:=Q_4$ and $Q_5:=Q_1$.
Let $\Sigma_i$ and $\Pi_i$ denote the images of $\xi_i$ and $\zeta_i$, respectively.
Then the boundary of the rectangle are divided into four corners at $Q_i$
and four linear segments $S_i$ between the corners at $Q_{i}$ and $Q_{i+1}$,
as shown in Fig.~\ref{fig:L1}(a).
Similarly to the third paragraph of this section,
we can define passage time $t_i(s)$ at the corner $Q_i$ and
passage time $\tau_i(s)$ along the side $S_i$.
Then the non-isochronicity follows directly from the limits
\begin{align}
\lim_{s\to 0^+}t_i(s)=\lim_{s\to 0^+}\tau_i(s)=0~~~\mbox{for all}~i=1,2,3,4.
\label{tstaos}
\end{align}
Actually,
we only need to consider the corner at $Q_1$ and sides $S_1$ and $S_2$
since it is similar to consider the others.
Along the side $S_1$,
the vector field generated by system~\eqref{equ:cherkas} is given by
\begin{align*}
\widehat{{\cal Y}}_*^{(0)}=\frac{1}{u^n}{\cal Y}_*^{(0)}
=\frac{1}{u^n}
\left\{
-u^{n+2}y\frac{\partial }{\partial u}
+(a_n+O(u))\frac{\partial}{\partial y}
\right\}.
\end{align*}
Along the side $S_2$,
the vector field generated by system~\eqref{equ:cherkas} is given by
\begin{align*}
\widehat{{\cal Z}}^{(0)}=\frac{1}{v}{\cal Z}^{(0)}
=\frac{1}{v}
\left\{
\frac{\partial }{\partial x}
-(P_2(x)v+P_0(x)v^3)\frac{\partial}{\partial v}
\right\}.
\end{align*}
Then Lemma~\ref{lm:time} ensures that
$$
\lim_{s\to 0^+}\tau_1(s)=\lim_{s\to 0^+}\tau_2(s)=0.
$$
Near the degenerate saddle corner at $Q_1$,
the vector field generated by system~\eqref{equ:cherkas} is given by
{\small
\begin{align*}
\widehat{\cal X}^{(0)}
=\frac{-1}{u^nv}{\cal X}^{(0)}
=\frac{-1}{u^nv}
\left\{
u^{n+2}\frac{\partial }{\partial u}+
v\big((a_n+O(u))v^2+c_{\ell_2}u^{n-\ell_2}+O(u^{n-\ell_2+1})\big)\frac{\partial}{\partial v}
\right\}.
\end{align*}
}Blowing up the degenerate equilibrium of $\widehat{\cal X}^{(0)}$
in the positive $v$-axis by the transformation
$u=w_1z_1$ and $v=z_1^{p^{(0)}}$ with $p^{(0)}=(n-\ell_2)/2$,
we obtain
\begin{align}
\widehat{{\cal X}}_h^{(1)}=\frac{a_n}{p^{(0)}w_1^nz_1^{n-p^{(0)}}}
\left\{
w_1(-1+o(1))\frac{\partial}{w_1}+z_1(1+o(1))\frac{\partial }{\partial z_1}
\right\},
\end{align}
which has a hyperbolic saddle at the origin.
On the other hand,
blowing up
in the positive $u$-axis
by the transformation $u=u_1$ and $v=u_1^{p^{(0)}}v_1$,
we obtain
\begin{align}
\widehat{{\cal X}}^{(1)}=\frac{-1}{u_1^{n-p^{(0)}}v_1}
\left\{
u_1^{n+2-2p^{(0)}}\frac{\partial }{\partial u_1}
+v_1\big(c_{\ell_2}+a_nv_1^2+O(u_1)\big)\frac{\partial}{\partial v_1}
\right\},
\end{align}
which has the only (semi-hyperbolic) equilibrium $(0,0)$ on the $v_1$-axis.
Thus the degenerate equilibrium $(0,0)$ of $\widehat{\cal X}^{(0)}$ is desingularized,
and the hyperbolic sector of $\widehat{\cal X}^{(0)}$ becomes
a poly-arc with two hyperbolic saddles at vertices, as shown in Fig.~\ref{fig:L1}(b) and (c).
Let $V_1$ and $V_2$ denote the two vertices of the poly-arc,
labeled in the clockwise direction.
Near each vertex we can choose two transverse sections as we have done at $Q_1$
and therefore
those transverse sections divide the poly-arc into $2$ corners at $V_1$ and $V_2$
and $1$ side between them.
Near corners at $V_1$ and $V_2$ the vector fields are given by
$\widehat{{\cal X}}_h^{(1)}$ and $\widehat{{\cal X}}^{(1)}$, respectively.
By Lemma~\ref{lm:time}, passage times at the two corners both approach to $0$.
The vector field along the side between corners at $V_1$ and $V_2$ is given by
$\widehat{\cal X}^{(1)}$ for $v_1\in I'$, a compact sub-interval of $(0,+\infty)$.
Thus the passage time along this side also approaches to $0$.
Consequently,
$$
\lim_{s\to 0^+}t_1(s)=0
$$
and therefore the claimed \eqref{tstaos} is proved.
Thus the global center is not isochronous in the case {\bf (L1)}.

In the case {\bf(L2)},
the four vertices $Q_1,...,Q_4$
are all equilibria of system~\eqref{equ:cherkas} at infinity.
Near the saddle corner at $Q_1$,
the vector field $\widehat{{\cal X}}^{(0)}$ is of the form
\begin{align*}
\widehat{{\cal X}}^{(0)}=\frac{-1}{u^nv}
\left\{
u^{n+2}\frac{\partial}{\partial u}
+v(c_n+o(1))\frac{\partial}{\partial v}
\right\}.
\end{align*}
By Lemma~\ref{lm:time},
the passage time $t_1(s)$ approaches to zero.
Similar to the above, $\tau_1(s)$ and $\tau_2(s)$ both approach to zero.
Thus \eqref{tstaos} is proved and therefore
the global center is not isochronous in the case {\bf (L2)}.

In the case {\bf(L3)},
the vector field
\begin{align*}
\widehat{{\cal Y}}_*^{(0)}
=\frac{1}{u^n}
\left\{
-u^{n+2}y\frac{\partial }{\partial u}
+\big(a_{\ell_0}u^{n-\ell_0}+O(u^{n-\ell_0+1})+(c_n+o(1))y^2\big)\frac{\partial}{\partial y}
\right\}.
\end{align*}
has the only equilibrium $(0,0)$ on the $y$-axis.
So on the boundary of the rectangle,
there are another two equilibria $Q_\pm:(\pm\infty,0)$ except for $Q_1,...,Q_4$,
as shown in Fig.~\ref{fig:L3}(a).
Similar to the case {\bf(L1)},
all passage times
near the hyperbolic saddle corners at $Q_1,...,Q_4$ and
along sides between any two adjacent corners
in the corner sequence $(Q_1,Q_+,Q_2,Q_3,Q_-,Q_4, Q_1)$
approach to zero.
However,
the equilibrium $(0,0)$ of $\widehat{\cal Y}_*^{(0)}$ is a degenerate equilibrium
with zero linear part.
Blowing up the degenerate equilibrium $(0,0)$
in the positive $y$-axis by the transformation
$u=w_1z_1$ and $y=z_1^{p^{(0)}}$ with $p^{(0)}=(n-\ell_0)/2$,
we obtain
\begin{align*}
\widehat{{\cal Y}}^{(1)}_{*h}=\frac{c_n}{p^{(0)}w_1^nz_1^{n-p^{(0)}}}
\left\{
w_1(-1+o(1))
\frac{\partial}{\partial w_1}
+z_1(1+o(1))\frac{\partial}{\partial z_1}
\right\},
\end{align*}
which has a hyperbolic saddle at the origin.
Moreover,
the blowing-up transformation $u=w_1z_1$ and $y=-z_1^{p^{(0)}}$ in the negative $y$-axis
brings $\widehat{\cal Y}_*^{(0)}$ into the form $-\widehat{{\cal Y}}^{(1)}_{*h}$,
which also has a hyperbolic saddle at the origin.
On the other hand,
blowing up in the positive $u$-axis by the transformation
$u=u_1$ and $y=u_1^{p^{(0)}}v_1$, we obtain
\begin{align*}
\widehat{{\cal Y}}^{(1)}_*=\frac{1}{u_1^{n-p^{(0)}}}
\left\{
-u_1^{n+2}v_1\frac{\partial}{\partial u_1}
+(a_{\ell_0}+c_nv_1^2+O(u_1))\frac{\partial}{\partial v_1}
\right\},
\end{align*}
which has no equilibria on the $v_1$-axis.
Thus the degenerate equilibrium $(0,0)$ of $\widehat{\cal Y}_*^{(0)}$ is desingularized,
and the hyperbolic sector of $\widehat{\cal Y}_*^{(0)}$ becomes
a poly-arc with hyperbolic saddles at vertices $V_1$ and $\tilde{V}_1$,
as shown in Fig.~\ref{fig:L3}(b) and (c).
The vector field near the corner at $V_1$ is given by $\widehat{{\cal Y}}^{(1)}_{*h}$,
near the corner at $\tilde{V}_1$ is given by $-\widehat{{\cal Y}}^{(1)}_{*h}$,
and along the side between corners at $V_1$ and $\tilde{V}_1$ is given by
$\widehat{{\cal Y}}_*^{(1)}$.
By Lemma~\ref{lm:time},
passage times near the corners $V_1$ and $\tilde{V}_1$ and
along the side between them all approach to zero.
Thus the passage time of \eqref{equ:cherkas} near the corner at $Q_+$ approaches to zero
and therefore the global center is not isochronous in the case {\bf (L3)}.
Consequently,
system~\eqref{equ:cherkas} has no isochronous global center when {\bf(G1)} holds.


\begin{figure}[H]
    \centering
     \subcaptionbox{%
     }{\includegraphics[height=1.5in]{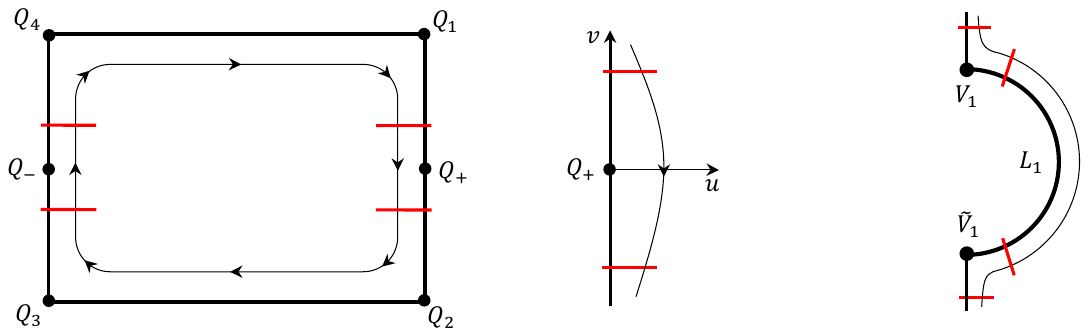}}~~~~~~~~~
     \subcaptionbox{%
     }{\includegraphics[height=1.5in]{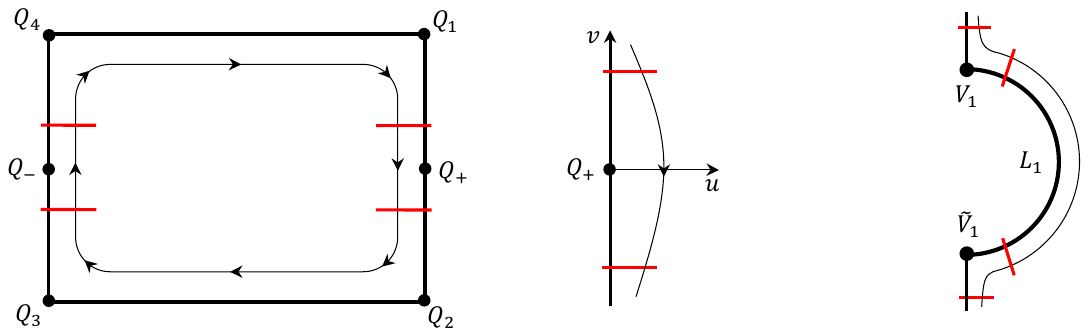}}~~~~~~~~~
     \subcaptionbox{%
     }{\includegraphics[height=1.5in]{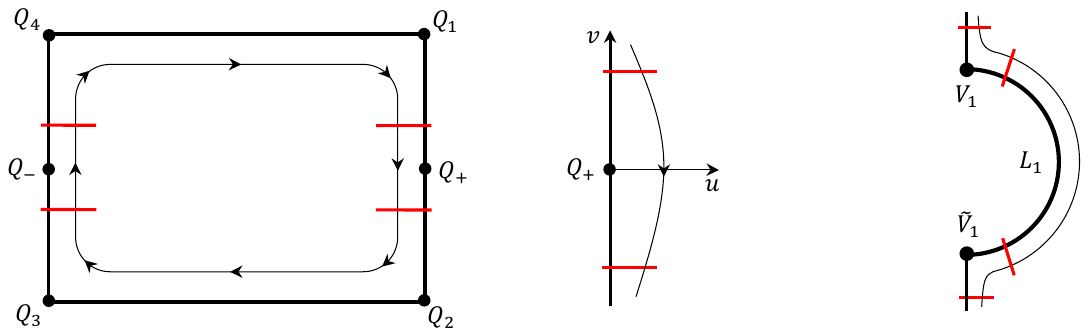}}
    \caption{
    (a) Transverse sections at the new equilibria $Q_\pm$,
    (b) Saddle corner at $Q_+$ in the local coordinates,
    (c) desingularization of the saddle corner at $Q_+$.}
    \label{fig:L3}
\end{figure}


When {\bf (G2)} holds,
we have
$$
P_i(x)=A_i(r(x))r'(x)~~~\mbox{for all}~i=0,1,2,
$$
where $A_i(x)$ and $r(x)$ are polynomials satisfying that
$A_0(0)<0$ and $r(x)=x^{2\nu}+O(x^{2\nu+1})$.
If the origin is an isochronous center,
then it is non-degenerate (\cite[Theorem~2.2]{CD97}) and therefore $\nu=1$.
Moreover,
global center implies that $xP_0(x)<0$ for all $x\ne 0$ and therefore
$xr'(x)>0$ for all $x\ne 0$.
Then $r(x)$ takes the form
$$
r(x)=x^2+\cdots+\delta x^{2\varpi}
$$
for some positive constant $\delta$ and integer $\varpi$.
Note that the polynomial map $r:\mathbb{R}_+\to\mathbb{R}_+$ is an analytic diffeomorphism.
Let $r^{-1}$ be the inverse and $R(u):=r'(r^{-1}(\frac{1}{u}))$.
Then, near $u=0$,
the function $R(u)$ can be written as
$$
R(u)=u^{-\frac{2\varpi-1}{2\varpi}}R_1(u)
$$
with $R_1(u):=2 \varpi\delta^{\frac{1}{2\varpi}}+o(1)$.
Under the transformation $(\tilde{x},\tilde{y})=(r(x),y)$,
system~\eqref{equ:cherkas} is changed into the system
\begin{align}
\dot{\tilde{x}}=r'(r^{-1}(\tilde{x}))\tilde{y},~~~
\dot{\tilde{y}}=r'(r^{-1}(\tilde{x}))
\big(A_0(\tilde{x})+A_1(\tilde{x})\tilde{y}+A_2(\tilde{x})\tilde{y}^2\big).
\label{equ:C-sys}
\end{align}
Note that system~\eqref{equ:cherkas} in the half-plane $x>0$
is conjugated to system~\eqref{equ:C-sys} in the half-plane $\tilde{x}>0$.
So $Q_1, Q_2$ and $Q_+$ (if exists) are all equilibria of system~\eqref{equ:C-sys}
at infinity of the half-plane $\tilde{x}>0$.
As analyzed above,
we only need to consider passage times of system~\eqref{equ:cherkas}
near the corner at $Q_1$ and the corner at $Q_+$ (if exists).
The above conjugation implies that
we only need to consider the same problem for system~\eqref{equ:C-sys}.
Similar to system~\eqref{equ:cherkas},
along the line $\{\infty\}\times\mathbb{R}$,
the vector field generated by system~\eqref{equ:C-sys} is given by
\begin{align*}
\widehat{{\cal V}}_*^{(0)}=\frac{R_1(u)}{u^{\kappa+\frac{2\varpi-1}{2\varpi}}} {\cal V}_*^{(0)}
\end{align*}
where as defined just before Theorem~\ref{TH:M1},
\begin{align*}
{\cal V}_*^{(0)}:=&
-u^{\kappa+2} y \frac{\partial }{\partial u}
+\big(
\tilde{A}_0(u)+\tilde{A}_1(u)y+\tilde{A}_2(u)y^2
\big)
\frac{\partial }{\partial y}
\\
=&-u^{\kappa+2} y \frac{\partial }{\partial u}
+\big(\alpha_{\kappa}+\beta_{\kappa}y+\gamma_{\kappa}y^2+O(u)\big)
\frac{\partial }{\partial y}.
\end{align*}
Near the point $\{\infty\}\times\{\infty\}$,
the vector field generated by system~\eqref{equ:C-sys} is of the form
\begin{align*}
\widehat{{\cal U}}^{(0)}=-\frac{R_1(u)}{u^{\kappa+\frac{2\varpi-1}{2\varpi}}v}{\cal U}^{(0)}
\end{align*}
where as defined just before Theorem~\ref{TH:M1},
\begin{align*}
{\cal U}^{(0)}:=&u^{\kappa+2}\frac{\partial }{\partial u}
+v\big(\tilde{A}_2(u)++\tilde{A}_1(u)v+\tilde{A}_0(u)v^2\big)\frac{\partial }{\partial v}
\\
=&u^{\kappa+2}\frac{\partial }{\partial u}
+v\big(\gamma_{\kappa}+\beta_{\kappa}v+\alpha_{\kappa}v^2+O(u)\big)\frac{\partial }{\partial v}.
\end{align*}
If {\bf(G2)} holds then either {\bf(G2i)}, or {\bf(G2ii)} or {\bf(G2iii)} holds.
When {\bf(G2i)} holds,
$\widehat{\cal V}_*^{(0)}$ has no equilibria on the $y$-axis.
So we only need to consider
the passage time of system~\eqref{equ:cherkas} near the corner at $Q_1$ or
equivalently the passage time near the degenerate saddle corner of $\widehat{\cal U}^{(0)}$
at the origin $(0,0)$.
When {\bf(G2i)} holds,
${\cal U}^{(0)}$ has no formal invariant curves of the form $v=\Phi(u)\in\mathbb{R}[\![u^{\frac{1}{2}}]\!]\backslash\{0\}$ such that $\Phi(0)=0$.
Similarly to Theorem~\ref{TH:M1} and Lemma~\ref{lm:AIC},
we have the following fact.

\noindent
{\bf Fact~1.}
{\it
There is an integer $j_*\ge 0$ such that
the Newton polygon ${\cal N}({\cal U}^{(j)})$ has exactly one edge $E^{(j)}$,
whose height is 2 and width is $2p^{(j)}$ for an integer $p^{(j)}$,
and the polynomial $\mathcal{P}_{E^{(j)}}$ has the only nonzero real root $\phi^{(j)}$,
which is of multiplicity 2, for all $j=0,...,j_*-1$,
and ${\cal N}({\cal U}^{(j_*)})$ has exactly one edge $E^{(j_*)}$,
whose height is $2$ and width is $p^{(j_*)}$ for an integer $p^{(j_*)}$,
and the polynomial $\mathcal{P}_{E^{(j_*)}}$ has no nonzero real roots,
where ${\cal U}^{(j+1)}:={\cal D}({\cal U}^{(j)};p^{(j)},1,\phi^{(j)})$
for all $j=0,...,j_*-1$
and the definition of height $($and width$)$ of an edge is given just below \eqref{defLE}.
}

It is worth mentioning that
different from the integer $p^{(i_*)}$ in {\bf(M1)},
the integer $p^{(j_*)}$ in {\bf Fact~1} is not required to be even
since we focus on the half-plane $u\ge 0$.

Similarly to the desingularization of ${\cal X}^{(0)}$
given in the proof of the sufficiency of Lemma~\ref{lm:Lien-M} when {\bf(M1)} holds,
applying successive quasi-homogeneous blowing ups according to {\bf Fact~1},
we find that after desingularizing
the hyperbolic sector of $\widehat{\cal U}^{(0)}$ in the first quadrant becomes
a poly-arc with hyperbolic saddles at vertices,
as shown in Fig.~\ref{fig:G2i}.
Let $V_1,...,V_{j_*+1},\tilde{V}_{j_*+1}...,\tilde{V}_1$ denote those vertices of
the poly-arc, labeled in the clockwise direction.
Near each vertex we can choose two transverse sections as we have done at $Q_1$
and therefore
those transverse sections divide the poly-arc into $2j_*+2$ corners and
$2j_*+1$ sides, each of which lies between two adjacent corners.
Let $L_1,...,L_{j_*},L_{j_*+1},\tilde{L}_{j_*},...,\tilde{L}_1$ denote
those sides, labeled in the clockwise direction.
We only need to consider corners at $V_1,...,V_{j_*+1}$ and
sides $L_1,...,L_{j_*}$ and $L_{j_*+1}$ since it is similar to investigate
other vertices and sides.
For $i=1,...,j_*$,
the desingularized vector field of $\widehat{{\cal U}}^{(0)}$
near the corner at $V_i$ and along the side $L_i$ are given by
\begin{align*}
\widehat{{\cal U}}_h^{(i)}
=\frac{\Theta_k(w_i,z_i)}
{(w_i)^{\varsigma_1^{(i)}}(z_i)^{\varsigma_2^{(i)}}}
{\cal U}_h^{(i)}
~~~\mbox{and}~~~
\widehat{{\cal U}}^{(i)}
=\frac{\Lambda_i(u_i,v_i)}
{(u_i)^{\varsigma_2^{(i)}}}
{\cal U}^{(i)},
\end{align*}
respectively,
where $\Theta_1(w_1,z_1):=-\frac{\alpha_{\kappa} R_1(w_1z_1)}{p^{(0)}}$,
$\Lambda_1(u_1,v_1):=-\frac{R_1(u_1)}{\phi^{(0)}+v_1}$,
for all $i=2,...,j_*$,
\begin{align*}
\Theta_i(w_i,z_i)&:=\frac{\alpha_{\kappa} \Lambda_{i-1}(w_i z_i,(z_i)^{p^{(i-1)}})}{p^{(i-1)}},
\\
\Lambda_i(u_i,v_i)&:=\Lambda_{i-1}(u_i,(u_i)^{p^{(i-1)}}(\phi^{(i-1)}+v_i)),
\end{align*}
and for all $i=1,...,j_*$,
\begin{align*}
{\cal U}_h^{(i)}&:=
w_i(-1+o(1))\frac{\partial }{\partial w_i}
+z_i(1+o(1))\frac{\partial }{\partial z_i},
\\
{\cal U}^{(i)}&:=
(u_i)^{\varsigma_0^{(i)}}\frac{\partial }{\partial u_i}
+\big({\cal P}_{E^{(i-1)}}(\phi^{(i-1)}+v_i)+O(u_i)\big)\frac{\partial }{\partial v_i}.
\\
\varsigma_0^{(i)}&:=\kappa+2-p^{(0)}-\sum_{j=0}^{i-1}p^{(j)},
\\
\varsigma_1^{(i)}&:=\kappa+\frac{2\varpi-1}{2\varpi}-\sum_{j=0}^{i-2}p^{(j)},
\\
\varsigma_2^{(i)}&:=\kappa+\frac{2\varpi-1}{2\varpi}-\sum_{j=0}^{i-1}p^{(j)}.
\end{align*}
Moreover,
the desingularized vector field of $\widehat{{\cal U}}^{(0)}$ near the corner at $V_{j_*+1}$
and along the side $L_{j_*+1}$ are given by
\begin{align*}
\widehat{{\cal U}}_h^{(j_*+1)}
&=\frac{\alpha_{\kappa}\Lambda_{j_*}\big(w_{j_*+1}z_{j_*+1}^2,(z_{j_*+1})^{p^{(j_*)}}\big)}
{p^{(j_*)}(w_{j_*+1})^{\varsigma_2^{(j_*)}}(z_{j_*+1})^{2\varsigma_2^{(j_*)}-p^{(j_*)}}}
{\cal U}_h^{(j_*+1)},
\\
\widehat{{\cal U}}^{(j_*+1)}
&=\frac{\Lambda_{j_*}\big(u_{j_*+1}^2,(u_{j_*+1})^{p^{(j_*)}}v_{j_*+1}\big)}
{2(u_{j_*+1})^{2\varsigma_2^{(j_*)}-p^{(j_*)}}}
{\cal U}^{(j_*+1)},
\end{align*}
respectively, where
\begin{align*}
{\cal U}_h^{(j_*+1)}&:=
w_{j_*+1}(-2+o(1))\frac{\partial }{\partial w_{j_*+1}}
+z_{j_*+1}(1+o(1))\frac{\partial }{\partial z_{j_*+1}},
\\
{\cal U}^{(j_*+1)}&:=
(u_{j_*+1})^{2\varsigma_0^{(j_*)}-1-p^{(j_*)}}\frac{\partial }{\partial u_{j_*+1}}
+({\cal P}_{E^{(j_*)}}(v_{j_*+1})+O(u_{j_*+1}))
\frac{\partial }{\partial v_{j_*+1}}.
\end{align*}
One can check that for all $i=1,...,j_*$,
\begin{align*}
\Theta_i(0,0)=-\frac{\alpha_{\kappa}R_1(0)}{p^{i-1}\phi^{(0)}}\ne 0,~~~
\Lambda_i(0,v_i)=-\frac{R_1(0)}{\phi^{(0)}}\ne 0.
\end{align*}
Moreover,
the order $2\varsigma_2^{(j_*)}-p^{(j_*)}$ in
$\widehat{{\cal U}}_h^{(j_*+1)}$ and $\widehat{{\cal U}}^{(j_*+1)}$
satisfies that
\begin{align}
2\varsigma_2^{(j_*)}-p^{(j_*)}
&=2\left(\varsigma_0^{(j_*)}-2+p^{(0)}+\frac{2\varpi-1}{2\varpi}\right)-p^{(j_*)}
\nonumber
\\
\nonumber
&=(2\varsigma_0^{(j_*)}-1-p^{(j_*)})+2p^{(0)}-3+\frac{2\varpi-1}{\varpi}
\\
\nonumber
&\ge 2p^{(0)}-2+\frac{2\varpi-1}{\varpi}
\\
&> 0.
\label{F1-Key}
\end{align}
where we used the fact that the power $2\varsigma_0^{(j_*)}-1-p^{(j_*)}$
in the expression of ${\cal U}^{(j_*+1)}$ is not less than $1$.
It follows that $\varsigma_2^{(j_*)}>p^{(j_*)}/2>0$ and therefore
$$
\varsigma_1^{(k)}>\varsigma_2^{(k)}\ge \varsigma_2^{(j_*)}>0~~~\mbox{for all}~k=1,...,j_*.
$$
By Lemma~\ref{lm:time},
passage times near corners at $V_1,...,V_{j_*}$ and
along side $L_1,...,L_{j_*+1}$ all approach to $0$.
Then the global center is not isochronous when {\bf(G2i)} holds.


\begin{figure}[H]
\centering
\includegraphics[height=1.8in]{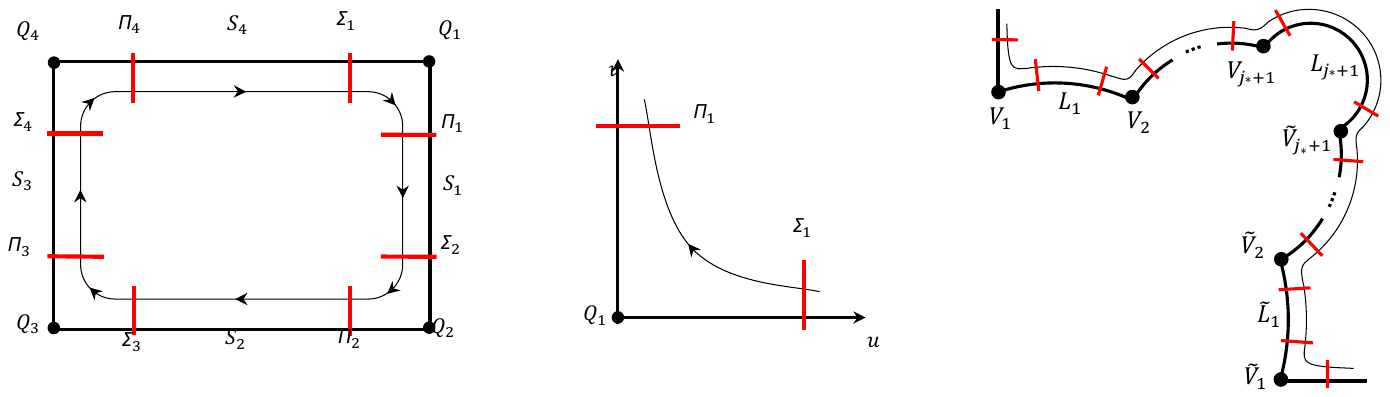}~~~
\caption{Desingularization of the degenerate saddle corner at $Q_1$.}
\label{fig:G2i}
\end{figure}


When {\bf(G2ii)} holds,
the equilibrium $(0,0)$ of $\widehat{\cal U}^{(0)}$ is a semi-hyperbolic saddle,
and $\widehat{{\cal V}}_*^{(0)}$ has no equilibria on the line $u=0$.
So Lemma~\ref{lm:time} ensures that
the passage times near the corners at $Q_1$ and $Q_2$ and
the passage time along the side between the two corners all approach to zero.
Then the global center is not isochronous when {\bf(G2ii)} holds.

When {\bf(G2iii)} holds,
the equilibrium $(0,0)$ of $\widehat{\cal U}^{(0)}$ is a semi-hyperbolic saddle.
Lemma~\ref{lm:time} shows that
the passage time near the corner at $Q_1$ approaches to zero.
On the other hand,
$\widehat{\cal V}_*^{(0)}$ has a unique equilibrium $(0,\tilde{y}_*)$ on the $y$-axis,
where $\tilde{y}_*:=\frac{-\beta_{\kappa}}{2\gamma_{\kappa}}$.
Translating it to the origin makes $\widehat{\cal V}_*^{(0)}$ be the following
$$
\widehat{\cal V}^{(0)}=\frac{R_1(u)}{u^{\kappa+\frac{2\varpi-1}{2\varpi}}} {\cal V}^{(0)},
$$
where ${\cal V}^{(0)}$ is given just before Theorem~\ref{TH:M1}.
By {\bf(G2iii)},
the vector field ${\cal V}^{(0)}$ has no formal invariant curves of the form $v=\Psi(u)\in\mathbb{R}[\![u^{\frac{1}{2}}]\!]$ such that $\Psi(0)=0$.
Similarly to Theorem~\ref{TH:M1} and Lemma~\ref{lm:AIC},
we have the following fact.

\noindent
{\bf Fact~2.}
{\it
There is an integer $j_*\ge 0$ such that
the Newton polygon ${\cal N}({\cal V}^{(j)})$ has exactly one edge $E^{(j)}$,
whose height is 2 and width is $2p^{(j)}$ for an integer $p^{(j)}$,
and the polynomial $\mathcal{P}_{E^{(j)}}$ has the only nonzero real root $\phi^{(j)}$,
which is of multiplicity 2, for all $j=0,...,j_*-1$,
and ${\cal N}({\cal V}^{(j_*)})$ has exactly one edge $E^{(j_*)}$,
whose height is $2$ and width is $p^{(j_*)}$ for an integer $p^{(j_*)}$,
and the polynomial $\mathcal{P}_{E^{(j_*)}}$ has no nonzero real roots,
where ${\cal V}^{(j+1)}:={\cal D}({\cal V}^{(j)};p^{(j)},1,\phi^{(j)})$
for all $j=0,...,j_*-1$ and
the definition of height $($and width$)$ of an edge is given just below \eqref{defLE}.
}

We mention that, similar to {\bf Fact~1},
the integer $p^{(j_*)}$ given in {\bf Fact~2} is not required to be even
since we also focus on the half-plane $u\ge 0$.
The main difference between the two facts is that
the $(0,2)$ if the left-end point of ${\cal N}({\cal U}^{(0)})$,
but $(0,1)$ for ${\cal N}({\cal V}^{(0)})$.
This would lead to the invalidity of an inequality similar to \eqref{F1-Key}
and the passage time could approach to a nonzero constant.
Therefore,
some other method will be used to prove the non-isochronicity.

Applying successive quasi-homogeneous blow-ups according to the above fact,
we find that
after desingularizing the degenerate equilibrium $(0,0)$ of
the vector field $\widehat{\cal V}^{(0)}$,
the hyperbolic sector in the half-plane $u\ge 0$ becomes
a poly-arc with hyperbolic saddles at vertices,
as shown in Fig.~\ref{fig:G2iii}.
Let $V_1,...,V_{j_*+1},\tilde{V}_{j_*+1}...,\tilde{V}_1$ denote those vertices of
the poly-arc, labeled in the clockwise direction.
Near each vertex we can choose two transverse sections as we have done at $Q_1$
and therefore
those transverse sections divide the poly-arc into $2j_*+2$ corners and
$2j_*+1$ sides, each of which lies between two adjacent corners.
Let $L_1,...,L_{j_*},L_{j_*+1},\tilde{L}_{j_*},...,\tilde{L}_1$ denote
those sides, labeled in the clockwise direction.
We only consider those vertices $V_1,...,V_{j_*+1}$ and those sides
$L_1,...,L_{j_*}$ and $L_{j_*+1}$ since it is similar to investigate
other vertices and sides.
For $i=1,...,j_*$,
the desingularized vector field of $\widehat{{\cal V}}^{(0)}$ near the corner at $V_i$
and along the side $L_i$ are given by
\begin{align*}
\widehat{{\cal V}}_h^{(i)}
=\frac{\gamma_{\kappa} R_1(w_i z_i)}
{p^{(i-1)}(w_i)^{\varsigma_1^{(i)}}(z_i)^{\varsigma_2^{(i)}}}
{\cal V}_h^{(i)}
~~~\mbox{and}~~~
\widehat{{\cal V}}^{(i)}
=\frac{R_1(u_i)}
{(u_i)^{\varsigma_2^{(i)}}}
{\cal V}^{(i)},
\end{align*}
respectively,
where
\begin{align*}
{\cal V}_h^{(i)}:=&
w_i(-1+o(1))\frac{\partial }{\partial w_i}
+z_i(1+o(1))\frac{\partial }{\partial z_i},
\\
{\cal V}^{(i)}:=&
-(u_i)^{\vartheta_0^{(i)}}\big(\tilde{y}_*+(u_i)^{p^{(0)}}(\phi^{(0)}+o(1))\big)
\frac{\partial }{\partial u_i}
\\
&+\big({\cal P}_{E^{(i-1)}}(\phi^{(i-1)}+v_i)+O(u_i)\big)
\frac{\partial }{\partial v_i},
\\
\vartheta_0^{(i)}:=&\varsigma_0^{(i)}+p^{(0)}.
\end{align*}
Moreover,
the desingularized vector field of $\widehat{{\cal V}}^{(0)}$ near the corner at $V_{j_*+1}$
and along the side $L_{j_*+1}$ are given by
\begin{align*}
\widehat{{\cal V}}_h^{(j_*+1)}
&=\frac{\gamma_{\kappa} R_1(w_{j_*+1}z_{j_*+1}^2)}
{p^{(j_*)}(w_{j_*+1})^{\varsigma_2^{(j_*)}}(z_{j_*+1})^{2\varsigma_2^{(j_*)}-p^{(j_*)}}}
{\cal V}_h^{(j_*+1)},
\\
\widehat{{\cal V}}^{(j_*+1)}
&=\frac{R_1(u_{j_*+1}^2)}
{2(u_{j_*+1})^{2\varsigma_2^{(j_*)}-p^{(j_*)}}}
{\cal V}^{(j_*+1)},
\end{align*}
respectively, where
\begin{align*}
{\cal V}_h^{(j_*+1)}
:=&w_{j_*+1}(-2+o(1))\frac{\partial }{\partial w_{j_*+1}}
+z_{j_*+1}(1+o(1))\frac{\partial }{\partial z_{j_*+1}},
\\
{\cal V}^{(j_*+1)}
:=&-(u_{j_*+1})^{2\vartheta_0^{(j_*)}-1-p^{(j_*)}}
\big(\tilde{y}_*+(u_{j_*+1})^{2p^{(0)}}(\phi^{(0)}+o(1))\big)
\frac{\partial }{\partial u_{j_*+1}}
\\
&+({\cal P}_{E^{(j_*)}}(v_{j_*+1})+O(u_{j_*+1}))
\frac{\partial }{\partial v_{j_*+1}}.
\end{align*}
In the case $\varpi\ge 2$,
since neither $\frac{2\varpi-1}{2\varpi}$ nor $\frac{2\varpi-1}{\varpi}$ is an integer,
$$
\varsigma_1^{(i)}\ne 0~~~\mbox{and}~~~\varsigma_2^{(i)}\ne 0~~~\mbox{for all}~i=1,...,j_*,
$$
and the order $2\varsigma_2^{(j_*)}-p^{(j_*)}$ of $\widehat{{\cal V}}^{(j_*+1)}$
cannot be zero.
By Lemma~\ref{lm:time},
passage times near corners at $V_1,...,V_{j_*}$ and along sides $L_1,...,L_{j_*+1}$
either approach to $0$ or $\infty$.
Thus the global center is not isochronous when $\varpi \ge 2$.


\begin{figure}[H]
\centering
\includegraphics[height=1.8in]{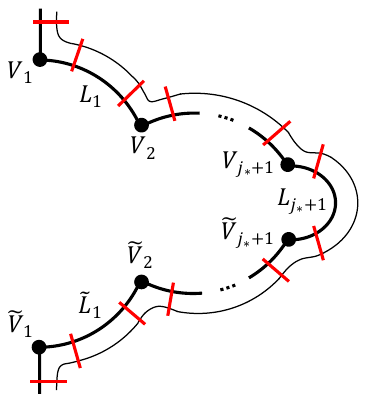}\\
\caption{Desingularization of the saddle corner at $Q_+$.}
\label{fig:G2iii}
\end{figure}


In the case $\varpi=1$,
the order $2\varsigma_2^{(j_*)}-p^{(j_*)}$ of
$\widehat{{\cal V}}^{(j_*+1)}$ can be zero,
implying that the passage time along the side $L_{j_*+1}$ approach to a nonzero constant
and the non-isochronicity cannot be induced.
So we have to use other method to prove the non-isochronicity.
In this case, we have
\begin{align*}
P_i(x)=xA_i(x^2)~~~\mbox{for all}~i=1,2,3,
\end{align*}
which are all odd functions.
Let $\wp(x):=\int_0^xP_2(\xi)d\xi$.
Then the transformation
\begin{align*}
u=\chi(x):=\int_0^x\exp(-\wp(\xi))d\xi
~~~\mbox{and}~~~
v=y\exp(-\wp(x))
\end{align*}
is globally invertible.
Under the above transformation,
system~\eqref{equ:cherkas} becomes an analytic Li\'enard system
\begin{align}
\dot u= v,~~~\dot v=g(u)+f(u)v
\label{equ:CtoL}
\end{align}
with a center equilibrium at the origin, where
$$
f(u):=P_1(\chi^{-1}(u))
~~~\mbox{and}~~~
g(u):=P_0(\chi^{-1}(u)) \exp(-\wp({\chi^{-1}(u)})).
$$
Since $P_2$ is odd,
we have $\wp$ is even and therefore $\chi$ is odd,
implying that $\chi^{-1}$ is also odd.
Thus functions $f$ and $g$ are both odd.
As proved in \cite[Corollary~1]{CD04},
Li\'enard system \eqref{equ:CtoL} has an isochronous center at the origin if and only if
\begin{align}
g(u)=u+\frac{1}{u^3}\left(\int_0^u \xi f(\xi) d\xi \right)^2,
\label{equ:L-iso}
\end{align}
which is equivalent to the equality
\begin{align*}
P_0(x)\exp(-\wp(x))=\chi(x)+\frac{1}{\chi^3(x)}\left(\int_0^{\chi(x)}\xi P_1(\chi^{-1}(\xi))d\xi\right)^2.
\end{align*}
Differentiating on both sides with respect to $x$,
we obtain
\begin{align*}
\int_0^{\chi(x)}\xi P_1(\chi^{-1}(\xi))d\xi
=\frac{P_1 \pm \sqrt{{\cal Q}}}{3}\chi^2(x),
\end{align*}
where ${\cal Q}:=3-3P_0'+3P_0P_2+P_1^2$.
Further, differentiating on both sides with respect to $x$,
we obtain
\begin{align}
\chi(x)=\frac{3P_1-2(P_1\pm\sqrt{{\cal Q}})}{\big(P_1\pm\sqrt{{\cal Q}}\big)'}\exp(-\wp(x)).
\label{equ:none-e}
\end{align}
Note that the right hand side of the above equality is clearly an elementary function.
However, the left hand side is non-elementary.
Actually, as indicated in \cite[p.971]{Rosen},
if $\chi(x)=\int_0^x\exp(-\wp(\xi))d\xi$ is elementary,
then there exist coprime polynomials $M$ and $N$ such that
$$
\int_0^x\exp(-\wp(\xi))d\xi=\frac{N(x)}{M(x)}\exp(-\wp(x)).
$$
Differentiating on both sides with respect to $x$ leads to the equality
$$
M(M-N'+NP_2)=-NM',
$$
which has no polynomial solutions $M$ and $N$ since $\deg P_2\ge 1$.
Thus $\chi$ is non-elementary and therefore \eqref{equ:none-e} cannot be hold,
implying that \eqref{equ:L-iso} cannot be hold, i.e.,
system~\eqref{equ:CtoL} has no isochronous center.
So the global center of system~\eqref{equ:cherkas} is not isochronous
in the case $\varpi=1$.
Consequently,
system~\eqref{equ:cherkas} has no isochronous global center when {\bf(G2)} holds.

When {\bf (G3)} holds,
the global center is not isochronous,
which can be proved similarly to the situation when {\bf (G1)} holds.
Thus the proof of this theorem is completed.
\qquad$\Box$

\section{Applications}
\setcounter{equation}{0}
\setcounter{lm}{0}
\setcounter{thm}{0}
\setcounter{rmk}{0}
\setcounter{df}{0}
\setcounter{cor}{0}
\setcounter{pro}{0}

Authors of \cite{GLV15} considered local center problem for the homogeneous Kukles system
\begin{align}
\dot x=y,~~~\dot y=\delta x+\sum_{i=0}^n a_{n-i,i} x^{n-i} y^i,
\label{homo-K}
\end{align}
where $\delta\le 0$, $n\ge 2$ and not all $a_{n-i,i}$\,s equal to zero.
Moreover,
global center conditions are obtained in \cite{CL24} for $\delta=-1$ and $n=5$.
Applying Theorem~\ref{TH:M1} to system~\eqref{homo-K},
we can obtain explicit conditions of global center for all $\delta$ and $n$.

\begin{thm}
System~\eqref{homo-K} has a global center if and only if it is of the form
\begin{align}
\dot x=y,~~~\dot y=\delta x+a_{n,0}x^n+a_{n-2,2}x^{n-2}y^2,
\label{GCHK}
\end{align}
where $\delta\le 0$, $n$ is odd, $a_{n,0}\le 0$, $a_{n-2,2}\le 0$ and $\delta^2+a_{n,0}^2\ne 0$.
\label{cor:hkgc}
\end{thm}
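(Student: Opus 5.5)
The plan is to reduce the homogeneous Kukles system to the Newton form and then read off the conditions from Lemma~\ref{lm:NtoC}, Theorem~\ref{TH:M1} and Lemma~\ref{lm:AIC}.

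Writing $\dot y=\sum_{i=0}^n P_i(x)y^i$, system~\eqref{homo-K} has $P_0(x)=\delta x+a_{n,0}x^n$ and $P_i(x)=a_{n-i,i}x^{n-i}$ for $i\ge 1$. Lemma~\ref{lm:NtoC} forces $a_{n-i,i}=0$ for $i\ge 3$, so we are left with a Cherkas system with $P_1=a_{n-1,1}x^{n-1}$ and $P_2=a_{n-2,2}x^{n-2}$.

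Next I would fix the local structure at the origin. Since the origin must be the unique equilibrium with $xP_0(x)<0$ for $x\ne0$, we need $\delta\le0$, $a_{n,0}\le0$ and $\delta^2+a_{n,0}^2\ne0$, and if $\delta=0$ then $n$ must be odd. Also, Lemma~\ref{lm:AIC} applied to the degree $n$ (here $\max\deg P_i=n$ as long as $a_{n,0}\ne0$; otherwise the degree drops, which I treat separately) gives $n$ odd in the case $a_{n,0}\neq 0$. The main step is to show $a_{n-1,1}=0$ by running through (C1)--(C3) of Theorem~\ref{Th:CS08}.
\begin{itemize}
\item Under (C3), identity \eqref{PPPPPPP} forces $\ell_0+\ell_2=2\ell_1$. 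With $\ell_1=n-1$ and $\ell_2=n-2$ this needs $\ell_0=n$, and comparing with the lower term $\delta x$ of $P_0$ in \eqref{PPPPPPP} yields $\delta=0$. Then (G3) requires $c_{\ell_2}=a_{n-2,2}<0$, $a_{n,0}<0$ and $b^2-4ac<0$. I would try to show that the resulting homogeneous system actually has an invariant line $y=kx$ through the origin, contradicting monodromy; this fails only when $b=0$.
\item Under (C2), the polynomials $P_i$ are monomials up to the linear term. So $r(x)=x^{2}$ (or $x^{2\nu}$) and $P_1=A_1(r)r'$ needs $n-1$ odd, i.e.\ $n$ even, which contradicts $n$ odd. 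More precisely I would check parity: $P_1$ is odd in $x$ only if $n-1$ is odd.
\end{itemize}
Hence $P_1=0$, which is (C1). Then (G1) gives $\ell_2=n-2$ odd, consistent with $n$ odd, and $c_{\ell_2}=a_{n-2,2}<0$ when it is nonzero; if $a_{n-2,2}=0$ we fall into the potential case, and oddness of $n$ suffices.

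For sufficiency I would verify (G1) directly, including the degenerate subcases $a_{n,0}=0$ (then $\ell_0=1$ and $P_0=\delta x$, where Lemma~\ref{lm:AIC}'s (M3) case applies with $p^{(0)}=(n-1)/2$) and $a_{n-2,2}=0$. I expect the main obstacle to be excluding $a_{n-1,1}\ne0$ in the degenerate (C3)/$\delta=0$ situation, where the monodromic-at-origin condition (W2) with $\iota_1=\nu$ must be combined with the monodromy at infinity.
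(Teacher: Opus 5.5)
Your route differs from the paper's in how it excludes $a_{n-1,1}\neq0$: you go through the center classification (C1)--(C3), while the paper argues directly. As written, though, the (C3) step has a genuine gap. After obtaining $\delta=0$, you propose to find an invariant line $y=kx$ of ``the resulting homogeneous system''. But $\dot x=y$, $\dot y=a_{n,0}x^n+a_{n-1,1}x^{n-1}y+a_{n-2,2}x^{n-2}y^2$ is not homogeneous, not even quasi-homogeneous. Along $y=kx$ one has $\frac{d}{dt}(y-kx)=(a_{n,0}+a_{n-1,1}k+a_{n-2,2}k^2)x^n-k^2x$, which vanishes identically only if $k=0$ and $a_{n,0}=0$, so no such line exists. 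Moreover, the (G3) inequalities $a_{n,0}<0$, $a_{n-2,2}<0$, $a_{n-1,1}^2-4a_{n,0}a_{n-2,2}<0$ are compatible with $a_{n-1,1}\neq0$. So nothing in your proposal actually excludes this case, which is why it resurfaces as your ``main obstacle''.

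The missing observation is to compare \eqref{PPPPPPP} in every degree, not only the lowest. With $P_0=\delta x+a_{n,0}x^n$, $P_1=a_{n-1,1}x^{n-1}$, $P_2=a_{n-2,2}x^{n-2}$,
\[
P_2P_0P_1+P_0P_1'-P_1P_0'
=a_{n-2,2}a_{n,0}a_{n-1,1}x^{3n-3}
+a_{n-1,1}\big(\delta a_{n-2,2}-a_{n,0}\big)x^{2n-2}
+(n-2)\delta a_{n-1,1}x^{n-1},
\]
while the right-hand side is $e\,a_{n-1,1}^3x^{3n-3}$. For $n\ge3$ the three exponents are distinct. Hence $a_{n-1,1}\neq0$ forces $\delta=0$, and then $a_{n,0}=0$, i.e.\ $P_0\equiv0$, which is excluded. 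So (C3) with $a_{n-1,1}\neq0$ never occurs, and no analysis of (W2) with $\iota_1=\nu$ is needed.

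The paper bypasses the classification entirely. The field ${\cal X}$ of \eqref{GCHK} is reversible under $(x,y,t)\mapsto(x,-y,-t)$, hence has a center. Since ${\cal X}\wedge{\cal Y}=a_{n-1,1}x^{n-1}y^2$ keeps one sign for odd $n$, the origin of the full system is a focus. That single argument also covers $a_{n-2,2}=0$, where the system is of Li\'enard type and lies outside the standing assumption $P_2\not\equiv0$ under which Theorems~\ref{Th:CS08} and \ref{TH:M1} are used. Your route would need Lemma~\ref{lm:Lien-M} for that subcase; it fails there because $\ell_0<2\ell_1+1$.

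Three smaller items you leave open are easy but needed:
\begin{itemize}
\item The deferred case $a_{n,0}=0$, $a_{n-1,1}\neq0$. Here the effective degree is $n-1$, with leading coefficients $c=0\neq b=a_{n-1,1}$, which none of (M1)--(M3) of Lemma~\ref{lm:AIC} allows. This must be settled first, because your (C2) parity argument presupposes $n$ odd.
\item In (C2), the reason $r=x^{2\nu}$: $r'$ divides $P_1=a_{n-1,1}x^{n-1}$, so it is a monomial.
\item In the sufficiency part with $a_{n,0}=0$, the effective degree is $n-2$. So $p^{(0)}=(n-3)/2$ for $n\ge5$, not $(n-1)/2$, and $n=3$ falls under (M2), not (M3).
\end{itemize}
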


{\bf Proof.}
By Lemmas~\ref{lm:NtoC} and \ref{lm:AIC},
if system~\eqref{homo-K} has a global center,
then it is of the form
\begin{align}
\dot x=y,~~~\dot y=\delta x+a_{n,0}x^n+a_{n-1,1}x^{n-1}y+a_{n-2,2}x^{n-2}y^2
\label{homo-K-R}
\end{align}
with odd $n$.
Moreover,
we see from conditions {\bf (W1)} and {\bf (W2)} that
the equilibrium $(0,0)$ of system~\eqref{homo-K-R} is isolated and monodromic if and only if
\begin{align}
\delta\le 0,~~~a_{n,0}\le 0~~~\mbox{and}~~~\delta^2+a_{n,0}^2\ne 0.
\label{dad2a2}
\end{align}
Let ${\cal X}$ and ${\cal Y}$ denote vector fields generated
by systems~\eqref{GCHK} and \eqref{homo-K-R} respectively.
Clearly,
the monodromic equilibrium $(0,0)$ of system~\eqref{GCHK} is a center
since the system is time-reversible with respect to the $x$-axis.
If $a_{n-1,1}\ne 0$
then the wedge product ${\cal X} \wedge {\cal Y}$ satisfies that
\begin{align*}
{\cal X} \wedge {\cal Y}
&=
\left|
\begin{array}{lll}
y & \delta x+a_{n,0}x^n+a_{n-2,2}x^{n-2}y^2
\\
y & \delta x+a_{n,0}x^n+a_{n-1,1}x^{n-1}y+a_{n-2,2}x^{n-2}y^2
\end{array}
\right|
\\
&=a_{n-1,1}x^{n-1}y^2
\\
&\le 0~\mbox{(or}\ge0\mbox{)},
\end{align*}
which implies that the equilibrium $(0,0)$ of system~\eqref{homo-K-R} is a focus
by Theorem~7.2 of \cite{DLA} (or \cite[Lemma~3.1, p.207]{ZZF}).
So we only need to further seek global center conditions for
system~\eqref{homo-K-R} when $a_{n-1,1}=0$ and \eqref{dad2a2} holds.
In the case $a_{n-2,2}\ne 0$,
by Theorem~\ref{TH:M1}~{\bf(G1)},
under condition~\eqref{dad2a2},
system~\eqref{homo-K-R} with $a_{n-1,1}=0$ and
has a global center if and only if $a_{n-2,2}<0$.
In the oppositive case $a_{n-2,2}= 0$,
system~\eqref{homo-K-R} becomes a potential system,
which clearly has a global center at the origin.
Thus, we obtain the complete description on global center
for homogeneous Kukles system \eqref{homo-K} given in Theorem~\ref{cor:hkgc} and
the proof is completed.
\qquad$\Box$

It has been studied in \cite{CLZ}
the monodromy condition at infinity for polynomial Li\'enard system
\begin{align}
\dot x=y,~~~
\dot y=P_0(x)+P_1(x)y,
\label{equ:Lien}
\end{align}
where
$P_0(x):=a_0+\cdots+a_{\ell_0}x^{\ell_0}$,
$P_1(x):=b_0+\cdots+b_{\ell_1}x^{\ell_1}$
and $a_{\ell_0}b_{\ell_1}\ne 0$.
Here we provide a simple proof to show the convenience of toroidal compactification.

\begin{lm}
Li\'enard system~\eqref{equ:Lien} is monodromic at infinity if and only if
either {\bf (L1)} $\ell_0$ is odd, $\ell_0>2\ell_1+1$ and $a_{\ell_0}<0$,
or {\bf (L2)} $\ell_0=2\ell_1+1$ and $b_{\ell_1}^2+2(\ell_0+1)a_{\ell_0}<0$.
\label{lm:Lien-M}
\end{lm}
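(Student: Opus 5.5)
We apply the toroidal compactification of Section~2 with $m=1$, so that $\widetilde m=1$. Condition (T1) gives $\dot x=v^{0}\cdot$(after the rescaling $dt\to v^{-1}dt$, which we instead take as $dt\to v\,dt$), namely
\[
\dot x=v,\qquad \dot v=-P_1(x)v^2-P_0(x)v^3 .
\]
After dividing by the common factor $v$, near the line $v=0$ we get $\dot x=1$, $\dot v=-P_1(x)v-P_0(x)v^2$. This flow is transversal and leaves $v=0$ invariant, so no orbit with $v\ne0$ reaches a finite point of $\mathbb{R}\times\{\infty\}$. Exactly as in the proof of Lemma~\ref{lm:AIC}, monodromy at infinity is therefore equivalent to two requirements:
\begin{enumerate}
\item[(ii)] the system in the chart $x=1/u$,
\[
\dot u=-u^{n+2}y,\qquad \dot y=\widetilde P_0(u)+\widetilde P_1(u)y,
\]
has no orbits with $u\ne0$ tending to the $y$-axis, where $n=\max\{\ell_0,\ell_1\}$;
\item[(iii)] the origin of the system in the chart $x=1/u$, $v=1/y$,
\[
\dot u=u^{n+2}v,\qquad \dot v=v^2\bigl(\widetilde P_1(u)+\widetilde P_0(u)v\bigr),
\]
is a saddle (after the rescalings indicated in \eqref{equ:T3}).
\end{enumerate}

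We split according to the relative size of $\ell_0$ and $2\ell_1+1$; this comparison is exactly what the Newton polygon at the corner point detects. In the chart (T3) the support contains $(n+1,0)$ from $\dot u$. From $G$ it contains $(n-\ell_1,1)$ and $(n-\ell_0,2)$, the latter coming from $v^3\widetilde P_0$. The relevant edge joins $(n-\ell_0,2)$ to $(n+1,0)$ in the weighted sense; it either passes above or below $(n-\ell_1,1)$, or through it. Passing through it means $n-\ell_0+n+1=2(n-\ell_1)$, i.e.\ $\ell_0=2\ell_1+1$.

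\textbf{Case $\ell_0<2\ell_1+1$ (or $\ell_1\ge\ell_0$).} If $\ell_1\ge\ell_0$, then on $u=0$ the chart (T2) has a hyperbolic or semi-hyperbolic zero of $\dot y=b_{\ell_1}y+\cdots$. Orbits then approach the $y$-axis (as in {\bf(N3)}), so (ii) fails. Otherwise the point $(n-\ell_1,1)$ lies strictly below the segment. A two-edge Newton polygon as in subcase {\bf(S1)} then produces a semi-hyperbolic equilibrium after a single blow-up, hence an invariant curve entering the corner, so (iii) fails.

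\textbf{Case $\ell_0>2\ell_1+1$.} Here $n=\ell_0$, and $\dot y=a_{\ell_0}+O(u)$ has no zero on $u=0$, so (ii) holds automatically. For (iii), the single edge joins $(0,2)$ to $(\ell_0+1,0)$, and its edge polynomial is $a_{\ell_0}v^3-v$ up to weights. A directional blow-up in the $v$-direction gives hyperbolic saddles with eigenvalue sign governed by $a_{\ell_0}$, as in \eqref{vf:Xh1}. A blow-up in the $u$-direction with $v=u^{(\ell_0+1)/2}v_1$ is possible only if $\ell_0$ is odd; otherwise we use the $u=u_1^2$ blow-up of {\bf(S2)}, which yields invariant curves. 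After the blow-up, real nonzero roots are absent exactly when $a_{\ell_0}<0$. This gives {\bf(L1)}.

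\textbf{Case $\ell_0=2\ell_1+1$.} We perform the quasi-homogeneous blow-up with weights $(1,\ell_1+1)$. The edge polynomial is then
\[
a_{\ell_0}v^2+b_{\ell_1}v-\frac{1}{\ell_1+1}
\]
up to normalization. Equivalently, the weighted-homogeneous principal part $\dot x=y$, $\dot y=a_{\ell_0}x^{\ell_0}+b_{\ell_1}x^{\ell_1}y$ has no invariant rays $y=cx^{\ell_1+1}$. The ray condition is
\[
(\ell_1+1)c^2=a_{\ell_0}+b_{\ell_1}c .
\]
Its discriminant is $b_{\ell_1}^2+4(\ell_1+1)a_{\ell_0}=b_{\ell_1}^2+2(\ell_0+1)a_{\ell_0}$. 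So absence of real roots is exactly {\bf(L2)}, which also forces $a_{\ell_0}<0$. We must also check the sign/oddness issues at the end-points; parity is automatic here since $\ell_0$ is odd.

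The main obstacle is bookkeeping the exponents after the rescalings $dt\to v^{m-2}dt$ with $m=1$. In particular, we need to verify that the degenerate boundary cases (a double root in {\bf(L2)}, and $\ell_0=2\ell_1+1$ with $\ell_0\le\ell_1$, which cannot occur) do not create extra monodromic configurations. For the double-root case we would argue as in {\bf(S0)}: the edge point $(0,0)$ appears, so the next blow-up yields a semi-hyperbolic point with an invariant curve, and the system is not monodromic.
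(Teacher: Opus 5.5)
Your argument is essentially the paper's, and your conclusions are correct. You use the same three toroidal charts and the same case split: $\ell_1\ge\ell_0$, $\ell_1<\ell_0<2\ell_1+1$, $\ell_0=2\ell_1+1$, $\ell_0>2\ell_1+1$. As in the paper, the two-edge case gives a semi-hyperbolic point after one blow-up. The other cases use the weighted blow-up $v=u_1^{(\ell_0+1)/2}v_1$, whose divisor equilibria are read off from the quadratic factor of the edge polynomial.

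For $\ell_0$ even the paper argues more simply: $\dot u=u^{n+2}$ with $n+2$ even on the invariant $u$-axis already rules out a saddle. Your $u=\pm u_1^2$ blow-up also works, but here the edge ends at $(\ell_0+1,0)$, which comes from $\dot u$. So the $u_1$-exponent is $1$, and the extra divisor points are hyperbolic nodes rather than the semi-hyperbolic points of {\bf(S2)}. They lie in $u<0$ if $a_{\ell_0}<0$ and in $u>0$ if $a_{\ell_0}>0$.

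A few displayed formulas need fixing, although your reasoning already uses the right objects.

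The corner chart is $\dot u=u^{n+2}$, $\dot v=v^2\bigl(\widetilde P_1(u)+\widetilde P_0(u)v\bigr)$, with no factor $v$ in $\dot u$. Your support point $(n+1,0)$ already presupposes this. The (T1) chart is $\dot x=1$, $\dot v=-P_1v^2-P_0v^3$.

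For $\ell_0=2\ell_1+1$ the edge polynomial is $v\bigl(a_{\ell_0}v^2+b_{\ell_1}v-(\ell_1+1)\bigr)$, not one with constant $-1/(\ell_1+1)$. With your constant the discriminant would be $b_{\ell_1}^2+4a_{\ell_0}/(\ell_1+1)$. So your ``equivalently'' holds only after this correction; then the substitution $v=1/c$ turns the quadratic exactly into your ray equation $(\ell_1+1)c^2=a_{\ell_0}+b_{\ell_1}c$.

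In the double-root case no further blow-up is needed. The divisor point $(0,v_*)$ already has eigenvalues $1$ and $0$. Its unstable manifold is transverse to the divisor, so it is an orbit entering the corner; this is the {\bf(S0)} argument.
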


\noindent{\bf Proof.}
As done for Cherkas system~\eqref{equ:cherkas},
under the toroidal compactification,
we only need to consider systems~\eqref{PP1}-\eqref{PP3} with $P_2=0$
and they become
\begin{align}
\dot x&=1,
&\dot v&=-P_0(x) v^3- P_1(x) v^2,
\label{LPP1}
\\
\dot u&=-u^{n+2}y,
&\dot y&=\widetilde{P}_0(u)+\widetilde{P}_1(u) y,
\label{LPP2}
\\
\dot u&=u^{n+2},
&\dot v&=\widetilde{P}_0(u) v^3+ \widetilde{P}_1(u) v^2,
\label{LPP3}
\end{align}
respectively,
where $n:=\max\{\ell_0,\ell_1\}$ and $\widetilde{P}_i(u):=u^nP_i(1/u)$.
Note that system~\eqref{equ:Lien} is monodromic at infinity
if and only if
system~\eqref{LPP1} has no orbits in the region
$\{(x,v)\in\mathbb{R}^2:v\ne 0\}$
approaching to any point on the $x$-axis and
system~\eqref{LPP2} has no orbits in the region
$\{(u,y)\in\mathbb{R}^2:u\ne 0\}$
approaching to any point on the $y$-axis and moreover
the equilibrium $(0,0)$ of system~\eqref{LPP3} is a saddle.

We first consider the sufficiency.
Clearly,
system~\eqref{LPP1} has no orbits
in the region $\{(x,v)\in\mathbb{R}^2:v\ne 0\}$
approaching to any point on the $x$-axis.
If either {\bf (L1)} or {\bf(L2)} holds,
we have $\ell_0\ge 2\ell_1+1$ and therefore $n=\ell_0>\ell_1$.
The second equation in \eqref{LPP2} takes the form $\dot y=a_{\ell_0}+O(u)$.
So system~\eqref{LPP2} has no orbits in the region
$\{(u,y)\in\mathbb{R}^2:u\ne 0\}$ approaching to any point on the $y$-axis.
Since the equilibrium $(0,0)$ of system~\eqref{LPP3} is degenerate,
we blow it up in the positive $v$-direction by the transformation
$u=w_1 z_1$ and $v=z_1^{(n+1)/2}$ and obtain
\begin{align}
\dot w_1=w_1(-a_{\ell_0}+o(1)),~~~
\dot z_1=z_1(a_{\ell_0}+o(1)),
\label{Lvf:Xh1}
\end{align}
where a common factor $2z_1^{n+1}/(n+1)$ is eliminated.
The equilibrium $(0,0)$ of system~\eqref{Lvf:Xh1} is a hyperbolic saddle
whose stable manifold and unstable manifold lie on the axes.
It is similar to blow up in the negative $v$-direction
by the transformation $u=w_1 z_1$ and $v=-z_1^{(n+1)/2}$
and find that
the equilibrium $(0,0)$ of the transformed system has the same property.
Moreover,
we blow up in the $u$-direction by the transformation
$u=u_1$ and $v=u_1^{(n+1)/2}v_1$ and obtain
\begin{align}
\dot u_1=u_1,~~~
\dot v_1=v_1\big({\cal Q}(v_1)+O(u_1)\big),
\label{Lvf:X1}
\end{align}
where a common factor $u_1^{n+1}$ is eliminated and
\begin{equation*}
{\cal Q}(v_1):=\left\{
\begin{array}{llll}
a_{\ell_0} v_1^2-(\ell_0+1)/2  & \mbox{if}~\ell_0> 2\ell_1+1,
\\
a_{\ell_0} v_1^2+b_{\ell_1} v_1-(\ell_0+1)/2 & \mbox{if}~\ell_0= 2\ell_1+1.
\end{array}
\right.
\end{equation*}
If {\bf (L1)} or {\bf(L2)} holds,
then system~\eqref{Lvf:X1} has the only equilibrium $(0,0)$ on the $v_1$-axis,
which is a hyperbolic saddle.
After blowing down,
we find that the degenerate equilibrium $(0,0)$ of system~\eqref{LPP3} is a saddle.
Consequently,
system~\eqref{equ:Lien} is monodromic at infinity
if either {\bf(L1)} or {\bf(L2)} holds.

Next, we consider the necessity.
If neither {\bf (L1)} nor {\bf(L2)} holds,
then there are five cases:
{\bf(i)}   $\ell_0\le\ell_1$,
{\bf(ii)}  $\ell_1<\ell_0<2\ell_1+1$,
{\bf(iii)} $\ell_0=2\ell_1+1$ and $b_{\ell_1}^2+2(\ell_0+1)a_{\ell_0}\ge 0$,
{\bf(iv)}  $\ell_0>2\ell_1+1$ and $\ell_0$ is even, and
{\bf(v)}   $\ell_0>2\ell_1+1$, $\ell_0$ is odd and $a_{\ell_0}> 0$.

In case {\bf(i)},
we have $n=\ell_1\ge \ell_0$ and therefore
on the invariant $y$-axis
the second equation of \eqref{LPP2} is either of the form
$\dot y|_{u=0}=a_{\ell_0}+b_{\ell_1}y$ or $\dot y|_{u=0}=b_{\ell_1}y$.
Then system~\eqref{LPP2} has an orbit not on the $y$-axis connecting with the semi-hyperbolic equilibrium $(0,-a_{\ell_0}/b_{\ell_1})$ or $(0,0)$ by \cite[Theorem~2.19]{DLA} or \cite[Theorem~7.1, p.114]{ZZF}.
So system~\eqref{equ:Lien} is not monodromic at infinity,
a contradiction.

In case {\bf(ii)},
under the transformation $u=u_1$ and $v=u_1^{n-\ell_1}v_1$
together with the time-rescaling ${\rm d}t\to u_1^{2(n-\ell_1)}{\rm d}t$,
system~\eqref{LPP3} becomes
\begin{align*}
\dot u_1=u_1^{2+2\ell_1-n},~~~
\dot v_1=v_1(a_{\ell_0}v_1^2+b_{\ell_1}v_1+O(u_1))
\end{align*}
which has a semi-hyperbolic equilibrium $(0,-b_{\ell_1}/a_{\ell_0})$
since $2+2\ell_1-n=1+(2\ell_1+1)-\ell_0\ge 2$.
By \cite[Theorem~2.19]{DLA} or \cite[Theorem~7.1, p.114]{ZZF},
the above system has an orbit connecting with this equilibrium
in the half-plane $u_1>0$ and therefore
system~\eqref{LPP3} has an orbit connecting with the origin
along the line $v=-b_{\ell_1}u/a_{\ell_0}$
in the half-plane $u>0$, i.e.,
the equilibrium $(0,0)$ of system~\eqref{LPP3} is not a saddle.
So system~\eqref{equ:Lien} is not monodromic at infinity,
a contradiction.

In case {\bf(iii)},
the same transformation in case {\bf (ii)} changes system~\eqref{LPP3} as
\begin{align*}
\dot u_1=u_1,~~~
\dot v_1=v_1\{a_{\ell_0} v_1^2+b_{\ell_1} v_1-(\ell_0+1)/2+O(u_1)\}.
\end{align*}
If $b_{\ell_1}^2+2(\ell_0+1)a_{\ell_0}= 0$,
then there is only one equilibrium on the $v_1$-axis,
which is a saddle-node.
If $b_{\ell_1}^2+2(\ell_0+1)a_{\ell_0}> 0$,
there are two equilibria on the $v_1$-axis,
which are a saddle and a node.
In both two situations,
after blowing down,
we find that the equilibrium $(0,0)$ of system~\eqref{LPP3} is not a saddle,
the same contradiction as above case.

In case {\bf(iv)},
$n$ equals $\ell_0$ and is even.
On the invariant $u$-axis of system~\eqref{LPP3},
we have $\dot u|_{v=0}=u^{n+2}$,
which implies that the equilibrium $(0,0)$ is not a saddle,
the same contradiction as above.

In case {\bf(v)},
$n=\ell_0>\ell_1$, $\ell_0$ is odd and $a_{\ell_0}>0$.
On invariant axes of system~\eqref{LPP3},
we have $\dot u|_{v=0}=u^{n+2}$ and $\dot v|_{u=0}=a_{\ell_0} v^3$,
which implies that the equilibrium $(0,0)$ is not a saddle,
the same contradiction as above.
Consequently,
we obtain the necessity by contradiction.
Thus the proof of Lemma~\ref{lm:Lien-M} is completed.
\qquad$\Box$

\begin{rmk}
{\rm
One can refer to \cite{Chris} for the local center condition of
polynomial Li\'enard system~\eqref{equ:Lien} and \cite{CLZ} for the global center condition.
Further,
we claim that system~\eqref{equ:Lien} has no isochronous global centers.
Actually,
it is shown in formula (24) of \cite{CD04} that
if the origin is an isochronous center, then
$$
\ell_0=2\ell_1+1~~~\mbox{and}~~~b_{\ell_1}^2+a_{\ell_0}(\ell_0+3)^2/4=0,
$$
which contradicts to the monodromy condition given in Lemma~\ref{lm:Lien-M}.
Moreover, it is proved in \cite{CJ89} that
any nonlinear polynomial potential system has no isochronous centers.
Consequently, polynomial Newton system~\eqref{equ:Newton} has no isochronous global centers.
}
\end{rmk}


\end{document}